\documentclass[onefignum,onetabnum]{siamonline250211}

\usepackage{lipsum}
\usepackage{amsfonts}
\usepackage{graphicx}
\usepackage{epstopdf}
\usepackage{algorithmic}
\usepackage{amsmath}
\allowdisplaybreaks \allowdisplaybreaks[4]
\ifpdf
  \DeclareGraphicsExtensions{.eps,.pdf,.png,.jpg}
\else
  \DeclareGraphicsExtensions{.eps}
\fi

\usepackage{enumitem}
\setlist[enumerate]{leftmargin=.5in}
\setlist[itemize]{leftmargin=.5in}

\newcommand{\rd}{\mathrm d}

\newcommand{\hE}{\mathbb E}
\newcommand{\hN}{\mathbb N}
\newcommand{\hP}{\mathbb P}
\newcommand{\hR}{\mathbb R}

\newcommand{\cC}{\mathcal C}
\newcommand{\cF}{\mathcal F}

\newcommand{\cR}{\mathcal R}
\newcommand{\hF}{\mathbb F}

\newcommand{\<}{\langle}
\renewcommand{\>}{\rangle}

\newsiamremark{remark}{Remark}
\newsiamremark{hypothesis}{Hypothesis}
\crefname{hypothesis}{Hypothesis}{Hypotheses}
\newsiamthm{claim}{Claim}
\newsiamremark{fact}{Fact}
\crefname{fact}{Fact}{Facts}

\newsiamthm{assumption}{Assumption}
\newsiamthm{example}{Example}

\headers{Fundamental weak convergence theorem for SVIEs}{X.~Dai, Q.~Yin, and D.~Jin}

\title{Fundamental weak convergence theorem for stochastic Volterra integral equations and its applications\thanks{Submitted to the editors DATE.
\funding{This work is supported by National Natural Science Foundation of China (Nos.\ 12401547, 12201228, 12471391), and Yunnan Fundamental Research Project (No.\ 202501AU070074), and Scientific Research and Innovation Project of Postgraduate Students in the Academic Degree of Yunnan University (No.\ KC-252513126).}}}

\author{Xinjie Dai\thanks{School of Mathematics and Statistics, Yunnan University, Kunming 650500, Yunnan, China (\email{dxj@ynu.edu.cn}).}
\and Qijiao Yin\thanks{School of Mathematics and Statistics, Yunnan University, Kunming 650500, Yunnan, China (\email{yinqijiao@stu.ynu.edu.cn}).}
\and Diancong Jin\thanks{School of Mathematics and Statistics, Huazhong University of Science and Technology, Wuhan 430074, China; 
Hubei Key Laboratory of Engineering Modeling and Scientific Computing, Huazhong University of Science and Technology, Wuhan 430074, China (Corresponding author, \email{jindc@hust.edu.cn}).}}

\usepackage{amsopn}

\ifpdf
\hypersetup{
  pdftitle={Fundamental weak convergence theorem for stochastic Volterra integral equations and its applications},
  pdfauthor={X.~Dai, Q.~Yin, and D.~Jin}
}
\fi

\begin{document}

\maketitle

\begin{abstract}
We study weak convergence rates of numerical approximations for stochastic Volterra integral equations (SVIEs), a class of non-Markovian models that arises naturally in stochastic volatility modeling and other fields. The intrinsic non-Markovian nature prevents the direct application of classical weak error techniques developed for finite-dimensional Markov processes. To overcome this difficulty, we combine a Markovian lifting technique with a domino argument, Taylor expansions, and Fr\'echet differential calculus for path-dependent functionals, and establish a fundamental weak convergence theorem for nonsingular SVIEs, providing a unified approach to the weak error analysis for a broad class of numerical approximations. As applications, we derive the first-order weak convergence rate for the stochastic theta method and the Wong--Zakai approximation. Our results relax existing assumptions for Euler-type schemes by removing the boundedness requirement on the diffusion coefficient. Furthermore, to the best of our knowledge, this work provides the first weak convergence result for Wong--Zakai approximations of SVIEs. Numerical experiments for a stochastic volatility model corroborate the theoretical convergence rate. 
\end{abstract}

\begin{keywords}
stochastic Volterra integral equations, fundamental weak convergence theorem, stochastic theta method, Wong--Zakai approximation
\end{keywords}

\begin{MSCcodes}
60H35, 60H20
\end{MSCcodes}

\section{Introduction}

Stochastic Volterra integral equations (SVIEs) extend stochastic differential equations (SDEs) to a non-Markovian setting by incorporating memory effects and capturing more general forms of temporal dependence. For example, in mathematical finance, classical SDE-based models often fail to capture the persistent memory effects and rough behaviors observed in financial time series. SVIEs provide a flexible framework for incorporating such non-Markovian features. In particular, rough volatility models, formulated through Volterra-type dynamics for the volatility process, could reproduce important stylized facts of financial markets, including the pronounced short-maturity implied volatility smile \cite{AbiJaber2019, BayerBook, ElEuchRosenbaum2019}. Beyond finance, SVIEs have found extensive applications in modeling anomalous diffusion in physics and biology, as well as viscoelastic materials \cite{Didier2022, LiLiuLu2017}. In this work, we consider the following SVIE:\ 
\begin{align} \label{eq.SVIE} 
X_t = X_0 + \int_0^t \mu(t, s, X_s) \rd s + \int_0^t \sigma(t, s, X_s) \rd W_s, \qquad t \in[0, T]. 
\end{align}
Here, $X_0 \in \hR^d$, $\mu: [0, T] \times [0, T] \times \hR^d \rightarrow \hR^d$, $\sigma: [0, T] \times [0, T] \times \hR^d \rightarrow \hR^{d \times m}$, and $W$ is an $m$-dimensional Brownian motion defined on some complete filtered probability space $(\Omega,\cF,\hP,\hF)$ with the filtration $\hF=\{\cF_t\}_{0 \leq t \leq T}$ satisfying the usual conditions. A prominent class of SVIEs frequently encountered in the literature takes the form 
\begin{align}\label{SVEwithK} 
X_t = X_0 + \int_0^t K_1(t,s) \tilde{\mu}(s, X_s) \rd s + \int_0^t K_2(t,s) \tilde{\sigma}(s, X_s) \rd W_s, 
\end{align}
where the kernel $K_i$ ($i = 1,2$) is said to be regular if $\lim _{s \uparrow t} |K_i(t,s)| < \infty$, and singular if $\lim _{s \uparrow t} |K_i(t,s)| = \infty$ (cf.\ \cite{Hamaguchi2025}).

In financial applications such as option pricing and risk management, computing the expectation of a path-dependent or terminal payoff function of the asset profile (i.e., $\mathbb{E}[f(X_T)]$) is an important objective. Since closed-form solutions to SVIEs are rarely accessible, the development and rigorous mathematical justification of efficient numerical approximations are of paramount theoretical and practical importance. Over the past decade, considerable progress has been made in the strong error analysis of numerical approximations for SVIEs; see e.g., \cite{Alfonsi2024, JourdainPages2025, LiHuangHu2022, RichardTanYang2021, Zhang2008JDE}. Moreover, the optimality of the strong rates of the Euler--Maruyama and Milstein methods is validated by the study of asymptotic error distributions \cite{FukasawaUgai2023, LiuHuGao2025, NualartSaikia2023}. Although a bound on the weak error can be obtained from bounds on the strong error, it is suboptimal in general. Thus, weak error estimates are much more desirable when approximating expectations of functionals. Unfortunately, weak error analysis for SVIEs is notoriously challenging and remains largely underdeveloped compared with that for SDEs, as the future increment of the solution depend heavily on its past trajectory, thereby inducing a non-Markovian structure.

For specific classes of SVIEs, such as rough volatility models, there have been some recent advances in the weak error analysis of their numerical discretizations. For instance, in the singular kernel case, building on the framework proposed by Friz et al.\ \cite{FrizSalkeld2025AAP} and employing Gaussian computation and Malliavin calculus, Alfonsi and Kebaier \cite{Alfonsi2026weak} established the optimal weak error estimate of the Euler-type method for a generalized rough volatility model. We also refer to \cite{BayerFukasawa2022, BayerHall2022, BonesiniJacquier2023, Gassiat2023} for related weak error analysis of rough volatility models, where the volatility process is represented as a functional of Riemann--Liouville fractional Brownian motion. However, for general SVIEs of the form \eqref{eq.SVIE} or \eqref{SVEwithK}, we are only aware of the work by \cite{BrasFukasawa2025} and \cite{FengZhang2023} regarding the weak convergence analysis of numerical methods. For the SVIE \eqref{SVEwithK} with regular kernels, Bras and Fukasawa \cite{BrasFukasawa2025} recently proposed an infinite-dimensional analytical framework together with a generalized It\^o formula to establish the first-order weak convergence rate for two classes of Euler-type methods, with and without kernel discretization. Based on stochastic Taylor expansions and a functional It\^o formula, Feng and Zhang \cite{FengZhang2023} analyzed the weak convergence rate for the cubature method applied to the SVIE \eqref{SVEwithK} with regular kernels. Despite these advances, a unified framework for analyzing weak errors---independent of specific schemes arguments---remains absent for general SVIEs. Specifically, a \emph{fundamental weak convergence theorem} for numerical SVIE schemes, analogous to the classical weak approximation theory for SDEs (see, e.g., \cite{MilsteinTretyakov2004, WangZhaoZhang2024}), has yet to be established. The present work is devoted to filling this gap.

To address the difficulties caused by the non-Markovian nature of SVIEs, an effective strategy is the Markovian lifting, which embeds the path-dependent solutions into an infinite-dimensional state space of continuous trajectories and further recover a desired Markovian-type structure \cite{AbiJaber2019Multifactor, FriesenGerhold2026, Hamaguchi2024}. In this spirit and following \cite{BrasFukasawa2025}, we introduce the two-parameter operator $P_{r,t}:\Phi_{2T}\to\Phi_{2T}$ (see also \eqref{eq.def:P_rt}), where $\Phi_{2T}$ denotes the space of $\mathbb{R}^d$-valued continuous trajectories on $[0,\infty)$ with support in $[0,2T]$. The operator $P_{r,t}$ exhibits a Markovian-type structure along the $\Phi_{2T}$-valued process $\{Y_t\}$: $P_{r, t}(Y_{t})_u = Y_{t+r}(u)$ for $u \geq 0$, where $Y_t(u) := P_{t,0}(\widetilde{\mathbf{0}})_u$ is ingeniously linked to the solution process of \eqref{eq.SVIE} through the relation $X_t=X_0+Y_t(0)$. This Markovian-type structure and its numerical counterpart enable a systematic decomposition of the global weak error via a discrete-time domino (or telescoping) argument, effectively reducing the global weak convergence analysis to the evaluation of a one-step weak error between $P_{r,t}$ and its numerical analogue, which serves as the core ingredient in establishing the global weak convergence rate. In \cite{BrasFukasawa2025}, the one-step weak error of Euler-type methods for \eqref{SVEwithK} was analyzed by applying a generalized It\^o-type formula. In this work, we introduce a different approach to control the one-step weak error by leveraging classical It\^o calculus combined with Fr\'echet differential calculus and Taylor expansions for path-dependent functionals. On the basis of it, we formulate a fundamental weak convergence theorem (i.e., Theorem \ref{Thm.ImportantII}) applicable to a broad class of approximation methods, thereby providing a unified methodology.

The effectiveness and versatility of our fundamental weak convergence theorem are highlighted through two applications:\ 
\begin{enumerate}
\item \textbf{The Stochastic Theta Method}:\ By applying our fundamental weak convergence theorem, we prove that the stochastic theta method achieves a first-order weak convergence rate. In addition, we remove the restriction on the boundedness of $\sigma$ and relax the fifth-order continuous differentiability of $\mu$ and $\sigma$ with respect to the state variable in the existing literature for Euler-type methods (see Remark \ref{remark.compare}).

\item \textbf{The Wong--Zakai Approximation}:\ We utilize the proposed fundamental weak convergence theorem to derive the weak error estimate for Wong--Zakai approximations. To the best of our knowledge, this provides the first weak convergence result for Wong--Zakai approximations of SVIEs.
\end{enumerate}

The rest of the paper is organized as follows. In Section \ref{sec.FWCT}, we introduce the basic formulation and assumptions for nonsingular SVIEs and establish a fundamental weak convergence theorem for their numerical approximations. In Sections \ref{sec.STM} and \ref{sec.WZA}, we apply the proposed weak convergence theorem to derive weak error estimates for the stochastic theta method and the Wong--Zakai approximation, respectively. In Section \ref{sec.NumericalExperiments}, numerical experiments for a stochastic volatility model are performed to corroborate the theoretical results. Finally, some auxiliary lemmas are collected in Appendix \ref{appendix_AuxiLemm}.

\section{Preliminaries and fundamental weak convergence theorem}
\label{sec.FWCT}

In this section, we introduce some preliminaries and present the fundamental weak convergence theorem for numerical approximations of SVIEs.

Throughout this paper, unless otherwise specified, we use the following notations. Let $\hE $ denote the expectation under the probability measure $\hP$. The symbols $\lfloor \,\cdot\, \rfloor$ and $\lceil \,\cdot\, \rceil$ represent the floor and ceiling functions, respectively. Let $C$ represent a generic constant, which may take different values at each occurrence but is always independent of the step size $h$. Fix $T > 0$, and let $\Phi_{2T}$ denote the space of $\mathbb{R}^d$-valued continuous trajectories on $\hR_+ := [0,\infty)$ with support in $[0,2T]$, endowed with the supremum norm $\|\cdot\|_{\infty}$. For continuously differentiable $\varphi \in \Phi_{2T}$, we denote by $\dot{\varphi}$ its derivative. If $\varphi$ is Lipschitz continuous, we write $[\varphi]_{Lip}$ for its Lipschitz constant. We denote by $\widetilde{\mathbf{0}}$ the path on $\hR_+$ identically equal to $0 \in \hR^d$. For operators $Q_1, \cdots, Q_J$, we denote their composition by $\prod_{j=1}^J Q_j := Q_1 \circ \cdots \circ Q_J$.

Let $(E, \| \cdot \|)$ be a real Banach space and $\ell \in \hN_+$. We denote by $\mathcal{C}_b^{\ell}(E; \mathbb{R})$ the space of all functions $f: E \to \mathbb{R}$ that are $\ell$ times continuously Fr\'echet differentiable and possess bounded derivatives up to order $\ell$ on $E$. For $1 \le k \le \ell$, the $k$-th Fr\'echet derivative of $f$ at $x \in E$ is denoted by $\nabla^k f(x) \in \mathcal{L}^k(E; \mathbb{R}),$ where $\mathcal{L}^k(E; \mathbb{R})$ represents the space of $k$-linear mappings from $E^k$ into $\mathbb{R}$. We note that each $\nabla^k f(x)$ is symmetric and continuous. We define the semi-norm on $\mathcal{C}_b^{\ell}(E; \mathbb{R})$ as
\begin{align*}
\|f\|_{\mathcal{C}_b^{\ell}(E)} := \sum_{k=1}^{\ell} \|\nabla^kf\|_\infty, \qquad \mbox{where}~~~ 
\|\nabla^kf\|_\infty:= \sup_{x \in E}\, \sup_{\|h_1\|\le 1,\ldots,\|h_k\|\le 1}|\<\nabla^k f(x),\otimes_{i=1}^k h_i\rangle|.
\end{align*}

\subsection{Stochastic Volterra integral equations}

Under the following Assumption \ref{assumption} with $\ell = 1$, the SVIE \eqref{eq.SVIE} admits a unique strong solution $X \in \cC([0,T]; L^{p} (\Omega; \hR^d))$ with $p \geq 2$. See \cite{AbiJaber2019, JourdainPages2025} for more details on the well-posedness of SVIEs.

\begin{assumption} [Regularity of coefficients] 
\label{assumption} 
$\mu \in \cC_b^{0,0,\ell}([0, T] \times [0, T] \times \mathbb{R}^d; \hR^{d})$ and $\sigma \in \cC_b^{0,0,\ell}([0, T] \times [0, T] \times \mathbb{R}^d; \hR^{d \times m})$ for some $\ell \in \hN_+$, where the subscript $b$ means that all derivatives of $\mu$ and $\sigma$ with respect to the third variable, up to order $\ell$, are bounded. 
\end{assumption}

For convenience, we extend the functions $\mu$ and $\sigma$ to the domain $\hR_+ \times \hR_+ \times \hR^d$. Specifically, we set
\begin{align*} 
\mu(t,s,x) = 0, \quad \sigma(t,s,x) = 0 \quad \text{for } (t,s) \notin [0,2T] \times [0,2T] \text{ and } x \in \hR^d.
\end{align*}
This extension allows us to write the SVIE \eqref{eq.SVIE} in a unified form for all $(t,s) \in \hR_+^2$, which simplifies the subsequent analysis and the definition of the associated `semigroup' type operators. Moreover, the extended functions $\mu$ and $\sigma$ retain the same regularity assumptions as in Assumption \ref{assumption}.

Inspired by the approach in \cite{BrasFukasawa2025}, we introduce a `semigroup' type operator associated with the SVIE \eqref{eq.SVIE} in order to handle its non-Markovian nature. For $t\in[0,T]$ and $r\in[0,T-t]$, define the operator $P_{r,t}: \Phi_{2T} \to \Phi_{2T}$ by $\varphi\mapsto P_{r,t}(\varphi)$ with 
\begin{align} \label{eq.def:P_rt}
P_{r,t}(\varphi)_u
= \varphi_{r+u} + \int_t^{t+r} \mu(t+r+u,s,\widetilde{X}_s^{t,\varphi})\,\rd s + \int_t^{t+r} \sigma(t+r+u,s,\widetilde{X}_s^{t,\varphi})\,\rd W_s,
\end{align}
for $u\ge0$. Here $\{\widetilde{X}_v^{t,\varphi}\}_{t\le v\le T}$ denotes the solution to the SVIE
\begin{align}\label{eq.def:tildeX}
\widetilde{X}_v^{t,\varphi}
= X_0 + \varphi_{v-t} + \int_t^v \mu(v,s,\widetilde{X}_s^{t,\varphi})\,\rd s + \int_t^v \sigma(v,s,\widetilde{X}_s^{t,\varphi})\,\rd W_s. 
\end{align}
By taking $t=0$ and $\varphi=\widetilde{\mathbf{0}}$, the uniqueness of the solution to \eqref{eq.SVIE} implies that $\widetilde{X}_v^{0,\widetilde{\mathbf{0}}}=X_v$ for all $v\in[0,T]$. In fact, the solution to \eqref{eq.SVIE} admits the representation
\begin{align}\label{eq.sol_XY}
X_t = X_0 + Y_t(0), \qquad t\in[0,T], 
\end{align}
where $Y_t(u) := P_{t,0}(\widetilde{\mathbf{0}})_u$ for $u \geq 0$. In other words, 
\begin{align*}
Y_t(u) = \int_0^t \mu(t+u, s, X_s) \rd s + \int_0^t \sigma(t+u, s, X_s) \rd W_s, 
\qquad t \in[0, T], \quad u \geq 0. 
\end{align*}
In light of \eqref{eq.def:P_rt} and \eqref{eq.def:tildeX}, it holds that 
\begin{align*}
P_{r, t}(Y_{t})_u 
&= Y_{t}(r+u) + \int_{t}^{t+r} \mu(t+r+u, s, \widetilde{X}_s^{t,Y_{t}}) \rd s + \int_{t}^{t+r} \sigma(t+r+u, s, \widetilde{X}_s^{t,Y_{t}}) \rd W_s\\
&= \int_0^{t} \mu(t+r+u, s, X_s) \rd s + \int_0^{t} \sigma(t+r+u, s, X_s) \rd W_s\\
&\quad + \int_{t}^{t+r} \mu(t+r+u, s, \widetilde{X}_s^{t,Y_{t}}) \rd s + \int_{t}^{t+r} \sigma(t+r+u, s, \widetilde{X}_s^{t,Y_{t}}) \rd W_s, 
\end{align*}
which together with the fact that $\widetilde{X}_s^{t,Y_{t}} = X_s$ for $s \in[t,t+r]$ yields the semigroup-type property:\ $P_{r, t}(Y_{t}) = Y_{t+r}$, or more explicitly,
\begin{align} \label{eq.semigroup:X}
P_{r, t}(Y_{t})_u = Y_{t+r}(u), \qquad u \geq 0.
\end{align}

\subsection{Fundamental weak convergence theorem}
\label{sub:FWCT}

Let $\{t_j = jh\}_{j=0}^N$ be a uniform partition of $[0,T]$ with step size $h = T/N$, where $N \in \mathbb{N}_+$. For $j \in \{0,1,\cdots,N-1\}$, we consider a numerical approximation of \eqref{eq.def:P_rt} with $r =h$ and $t = t_j$. The resulting approximation of $P_{h,t_j}$ defines a numerical `semigroup' type operator $\bar{P}_{h,t_j} : \Phi_{2T} \to \Phi_{2T}$ by $\varphi \mapsto \bar{P}_{h,t_j}(\varphi)$. For $0\le i\le N-1$ and $0\le j\le N-i$, we denote 
\begin{align*} 
\bar{P}_{t_j, t_i} = \prod_{k=1}^{j}\bar{P}_{h, t_{i+j-k}}.
\end{align*}
By convention, we set $\bar{P}_{0,t_i} = \mathrm{Id}$ for $i \in \{0,1,\cdots,N\}$. In analogy with \eqref{eq.sol_XY}, the numerical approximation of $X_{t_k}$ is defined by
\begin{align} \label{eq:Xtk}
\bar{X}_{t_k} = X_0 + \bar{Y}_{t_k}(0), \qquad k = 0,1,\ldots,N,
\end{align}
where $\bar{Y}_{t_k}(u) := \bar{P}_{t_k,0}(\widetilde{\mathbf{0}})_u$ for $u \ge 0$. Then, the semigroup-type property follows from 
\begin{small}
\begin{align} 
\label{eq.semigroup:tildeX} 
\bar{P}_{h, t_j} (\bar{Y}_{t_j}) = \bar{P}_{h, t_j} ( \bar{P}_{t_j,0}(\widetilde{\mathbf{0}}) ) = \bar{P}_{h, t_j} \left( \prod_{k=1}^{j}\bar{P}_{h, t_{j-k}}(\widetilde{\mathbf{0}}) \right) = \prod_{k=1}^{j+1}\bar{P}_{h, t_{j+1-k}}(\widetilde{\mathbf{0}}) = \bar{P}_{t_{j+1},0}(\widetilde{\mathbf{0}}) = \bar{Y}_{t_{j+1}}, 
\end{align}
\end{small}
for any $j \in \{0, 1, \ldots, N-1\}$.

\begin{example}[Euler--Maruyama method] 
For $j \in \{0,1,\cdots,N-1\}$, applying the Euler--Maruyama method to \eqref{eq.def:P_rt} with $r =h$ and $t = t_j$ defines a numerical `semigroup' type operator $\bar{P}_{h, t_j}: \Phi_{2T} \to \Phi_{2T}$, $\varphi \mapsto \bar{P}_{h, t_j}(\varphi)$, where
\begin{align*}
\bar{P}_{h, t_j}(\varphi)_u 
&= \varphi_{h+u} + \mu(t_{j+1}+u, t_j, \widetilde{X}_{t_j}^{t_j,\varphi})\, h + \sigma(t_{j+1}+u, t_j, \widetilde{X}_{t_j}^{t_j,\varphi})\, (W_{t_{j+1}}-W_{t_j}) \\
&= \varphi_{h+u} + \mu(t_{j+1}+u, t_j, X_0 + \varphi_0)\, h + \sigma(t_{j+1}+u, t_j, X_0 + \varphi_0)\, (W_{t_{j+1}}-W_{t_j}), \notag
\end{align*}
for $u \ge 0$. 
Then, recalling the definition of $\bar{Y}$, \eqref{eq:Xtk}, and \eqref{eq.semigroup:tildeX} indicates that for any $0\le i,\,k\le N$,
\begin{align*}
\bar{Y}_{t_k}(t_i) 
&= \bar{P}_{h,t_{k-1}}(\bar{Y}_{t_{k-1}} )_{t_i} \\
&= \bar{Y}_{t_{k-1}}(t_{i+1})+ \mu( t_{k+i}, t_{k-1}, X_0 + \bar{Y}_{t_{k-1}}(0))h \\
&\quad + \sigma( t_{k+i}, t_{k-1}, X_0 + \bar{Y}_{t_{k-1}}(0) ) (W_{t_k}-W_{t_{k-1}})\\
&=\bar{Y}_{t_{k-1}}(t_{i+1})+ \mu( t_{k+i}, t_{k-1}, \bar{X}_{t_{k-1}})h + \sigma( t_{k+i}, t_{k-1}, \bar{X}_{t_{k-1}}) (W_{t_k}-W_{t_{k-1}}).
\end{align*}
Thus, iterating the above recurrence relation with the fact that $\bar{Y}_0 = \widetilde{\mathbf{0}}$ yields 
\begin{align*}
\bar{Y}_{t_k}(t_i)
&= \sum_{j=1}^{k} \mu( t_{k+i}, t_{j-1}, \bar{X}_{t_{j-1}})h + \sum_{j=1}^{k} \sigma( t_{k+i}, t_{j-1}, \bar{X}_{t_{j-1}}) (W_{t_j}-W_{t_{j-1}}). 
\end{align*}
Finally, by taking $i=0$ in the above formula and using \eqref{eq:Xtk}, one can recover the Euler--Maruyama method for SVIE \eqref{eq.SVIE} as follows:\ for $k \in \{0, 1, \ldots, N\}$, 
\begin{align*}
\bar{X}_{t_k}
= X_0 + \sum_{j=1}^{k} \mu( t_{k}, t_{j-1}, \bar{X}_{t_{j-1}})h + \sum_{j=1}^{k} \sigma( t_{k}, t_{j-1}, \bar{X}_{t_{j-1}}) (W_{t_j}-W_{t_{j-1}}). 
\end{align*}
\end{example}

To state the fundamental weak convergence theorem for numerical approximations of SVIE \eqref{eq.SVIE}, we impose the following two assumptions.

\begin{assumption} [Regularity of approximations] 
\label{assum.barP is C^1}
For all $k \in \{0, 1, \ldots, N-1\}$, it holds that
\begin{align*}
\bar P_{t_k, 0}(\widetilde{\mathbf{0}}) \in \cC^1 ([0, \infty); \hR^d) 
\qquad \mbox{and} \qquad 
\mathbb{E} \left( \big[ \bar P_{t_k, 0} (\widetilde{\mathbf{0}}) \big]_{Lip}^{\nu} \right) \leq C \qquad 
\mbox{for some~~~} \nu > 0. 
\end{align*}
\end{assumption}

For $g: \Phi_{2T} \rightarrow \hR$, we define the following operators 
\begin{align} \label{eq.redef of P}
\boldsymbol{P}_{r, t} \, g(\varphi ) = \mathbb{E} \big[ g(P_{r, t}(\varphi )) \big] 
\qquad \mbox{and} \qquad 
\bar{\boldsymbol{P}}_{t_j, t_i} \, g(\varphi )= \mathbb{E} \big[ g(\bar{P}_{t_j, t_i}(\varphi )) \big], 
\end{align}
where $\varphi \in \Phi_{2T}$, $t \in [0, T]$, $r \in [0, T-t]$, $0\le i\le N-1$, and $0\le j\le N-i$.

\begin{assumption} [One-step approximation error] 
\label{assum.importantI}
Let $\ell$ and $\nu$ be as specified in Assumptions \ref{assumption} and \ref{assum.barP is C^1}, respectively. For any $\varphi \in \Phi_{2T} \cap \mathcal{C}^1([0, \infty); \mathbb{R}^d)$ and any $g \in \mathcal{C}_b^{\ell}( \Phi_{2T}; \mathbb{R})$, there exists a constant $C > 0$ independent of $\varphi$ and $h$, such that 
\begin{align*}
| (\bar{\boldsymbol{P}}_{h, t_k} - \boldsymbol{P}_{h, t_k}) g(\varphi) | 
\leq C \big( 1 + [\varphi]_{Lip}^{\nu} \big) h^{q+1},\qquad \forall\, k \in \{0, 1, \dots, N-1\}
\end{align*}
holds for some $q > 0$. Here, the constant $C$ depends on $g$ only through its semi-norm $\|g\|_{\mathcal{C}_b^{\ell}( \Phi_{2T})}$.
\end{assumption}

\begin{proposition} 
\label{Prop.ImportantI}
Let $X$ and $\bar{X}$ denote the exact solution and its numerical approximation defined by \eqref{eq.SVIE} and \eqref{eq:Xtk}, respectively. Suppose that Assumptions \ref{assumption}, \ref{assum.barP is C^1}, and \ref{assum.importantI} are satisfied. Then, for any given test function $f \in \mathcal{C}_{b}^{\ell} (\mathbb{R}^d; \mathbb{R})$, there exists a constant $C > 0$ independent of $h$ such that 
\begin{align*}
\big| \mathbb{E}[f(\bar{X}_T)] - \mathbb{E}[f(X_T)] \big| \leq C h^{q},
\end{align*}
where the rate $q > 0$ of weak convergence is prescribed by Assumption \ref{assum.importantI}.
\end{proposition}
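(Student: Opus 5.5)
We plan to prove Proposition~\ref{Prop.ImportantI} by a discrete domino (telescoping) argument built on the semigroup-type properties \eqref{eq.semigroup:X} and \eqref{eq.semigroup:tildeX}. Define the functional $F:\Phi_{2T}\to\hR$ by $F(\varphi):=f(X_0+\varphi_0)$. It is $\ell$ times Fr\'echet differentiable, with $\<\nabla^k F(\varphi),\otimes_{i=1}^k h_i\>=\nabla^k f(X_0+\varphi_0)(h_1(0),\ldots,h_k(0))$. Hence $F\in\cC_b^\ell(\Phi_{2T};\hR)$ and $\|F\|_{\cC_b^\ell(\Phi_{2T})}\le \|f\|_{\cC_b^\ell(\hR^d)}$. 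Since $X_T=X_0+Y_T(0)$ and $\bar X_T=X_0+\bar Y_{t_N}(0)$, we have
\begin{align*}
\hE[f(\bar X_T)]-\hE[f(X_T)]=\hE\big[F(\bar P_{t_N,0}(\widetilde{\mathbf 0}))\big]-\hE\big[F(P_{T,0}(\widetilde{\mathbf 0}))\big].
\end{align*}
We then telescope over the intermediate states $\bar Y_{t_k}$:
\begin{align*}
\hE[F(\bar Y_{t_N})]-\hE[F(Y_{t_N})]=\sum_{k=0}^{N-1}\Big(\hE\big[F(P_{t_N-t_{k+1},t_{k+1}}(\bar Y_{t_{k+1}}))\big]-\hE\big[F(P_{t_N-t_k,t_k}(\bar Y_{t_k}))\big]\Big).
\end{align*}
The endpoint terms are correct because $P_{0,t_N}=\mathrm{Id}$ and $\bar Y_0=\widetilde{\mathbf 0}$. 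Set $g_{k+1}(\varphi):=\boldsymbol P_{T-t_{k+1},t_{k+1}}F(\varphi)$. We use the flow property $P_{T-t_k,t_k}=P_{T-t_{k+1},t_{k+1}}\circ P_{h,t_k}$ in law, together with the independence of Brownian increments after $t_k$ from $\cF_{t_k}$, via a conditioning/freezing argument. The $k$-th summand then becomes $\hE\big[(\bar{\boldsymbol P}_{h,t_k}-\boldsymbol P_{h,t_k})g_{k+1}(\varphi)\big|_{\varphi=\bar Y_{t_k}}\big]$. The required flow identity is the deterministic-initial-datum analogue of \eqref{eq.semigroup:X}. It follows from the definitions \eqref{eq.def:P_rt}–\eqref{eq.def:tildeX}: for $s\ge t+r$ one has $\widetilde X^{t,\varphi}_s=\widetilde X^{t+r,P_{r,t}(\varphi)}_s$ by uniqueness. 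We also need the conditional independence structure of $\bar P_{h,t_k}$, namely that it uses only the noise on $[t_k,t_{k+1}]$, which is implicit in the scheme's construction.

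Next we apply Assumption~\ref{assum.importantI} with $g=g_{k+1}$ and $\varphi=\bar Y_{t_k}$, which is $\cC^1$ by Assumption~\ref{assum.barP is C^1}. This gives the summand bound $C(1+\hE[\bar Y_{t_k}]_{Lip}^\nu)h^{q+1}\le Ch^{q+1}$, again by Assumption~\ref{assum.barP is C^1}. Summing over $N=T/h$ terms yields $Ch^q$. Two details need care here. First, the assumption is stated for deterministic $\varphi$, so we condition on $\cF_{t_k}$ and use that the constant $C$ is uniform in $\varphi$. Second, the constant must be uniform in $k$, which requires $\sup_k\|g_{k+1}\|_{\cC_b^\ell(\Phi_{2T})}<\infty$.

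The main obstacle is this last regularity estimate. We must show that $\varphi\mapsto \hE[F(P_{r,t}(\varphi))]$ lies in $\cC_b^\ell(\Phi_{2T};\hR)$ with derivatives bounded uniformly in $t,r$. Our plan is to differentiate the solution $\widetilde X^{t,\varphi}$ of \eqref{eq.def:tildeX} with respect to $\varphi$ in direction $\psi$. The first derivative $\eta_v=\nabla_\varphi\widetilde X^{t,\varphi}_v\psi$ solves the linear SVIE
\begin{align*}
\eta_v=\psi_{v-t}+\int_t^v\partial_x\mu(v,s,\widetilde X_s)\eta_s\,\rd s+\int_t^v\partial_x\sigma(v,s,\widetilde X_s)\eta_s\,\rd W_s .
\end{align*}
By Assumption~\ref{assumption} the derivative coefficients are bounded, so a Gronwall-type argument for Volterra equations gives $\sup_v\hE|\eta_v|^p\le C\|\psi\|_\infty^p$. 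Higher derivatives, up to order $\ell$, satisfy analogous linear SVIEs whose forcing terms are polynomial in the lower-order derivatives. Induction with H\"older's inequality then gives moment bounds $C\prod_i\|\psi_i\|_\infty$. Continuity of the derivatives in $\varphi$ follows from the same estimates applied to differences. Finally, since $P_{r,t}(\varphi)_0$ is an affine functional of $\widetilde X^{t,\varphi}$ (the $u=0$ evaluation), the chain rule with the bounded derivatives of $f$ yields $\|g_{k+1}\|_{\cC_b^\ell}\le C\|f\|_{\cC_b^\ell}$ with $C$ independent of $k$ and $h$, which completes the argument.
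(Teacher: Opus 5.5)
Your proposal is correct and follows essentially the same route as the paper. The hybrid-path telescoping you write is the expanded form of the paper's operator telescoping \eqref{eq.weak error}, your $g_{k+1}=\boldsymbol P_{T-t_{k+1},t_{k+1}}F$ coincides with the paper's $g_{k+1}$ from \eqref{eq.def_gk}, and your tangent-process/Gr\"onwall argument for the uniform $\cC_b^\ell$ bound is the content of Lemma~\ref{lem.def of g}. The only difference is that you spell out the flow identity and the conditioning on $\cF_{t_k}$ needed to apply Assumption~\ref{assum.importantI} at the random point $\bar Y_{t_k}$, which the paper uses implicitly when writing $\hE f(X_T)$ and $\hE f(\bar X_T)$ as compositions of $\boldsymbol P_{h,t_j}$ and $\bar{\boldsymbol P}_{h,t_j}$.
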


The above proposition shows that the global weak convergence order is one order lower than the local weak convergence order, which is consistent with the case of SDEs. To facilitate the proof of Proposition \ref{Prop.ImportantI}, we introduce the following auxiliary lemma. For $\varphi \in \Phi_{2T}$ and $f:\, \mathbb{R}^d \rightarrow \mathbb{R}$, define
\begin{align} 
\label{eq.def_gk}
g_{k}(\varphi ) 
= \mathbb{E} \big[ f(P_{t_{N-k}, t_{k}}(\varphi)_0 + X_0) \big], 
\qquad k \in \{ 0, 1, \dots, N-1 \}. 
\end{align}

\begin{lemma} \label{lem.def of g} 
Let Assumption \ref{assumption} hold with $\ell \geq 1$. If $f \in \cC_{b}^{\ell} (\mathbb{R}^d; \mathbb{R})$, then for each $k \in \{0,1, \dots, N - 1\}$, the functional $g_k$ belongs to $\cC_b^{\ell}( \Phi_{2T}; \hR)$. Moreover, there exists a constant $C > 0$, independent of $k$ and $h$, such that 
\begin{align*}
\|g_k\|_{\cC_b^{\ell}( \Phi_{2T})}\le C\|f\|_{\cC_{b}^{\ell} (\mathbb{R}^d)}. 
\end{align*} 
\end{lemma}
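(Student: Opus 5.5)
We reduce the claim to regularity of the solution map $\varphi\mapsto \widetilde{X}^{t_k,\varphi}_T$. By definition \eqref{eq.def:P_rt} with $t=t_k$ and $r=t_{N-k}=T-t_k$ (taking the operator with time horizon up to $T$), we have
\begin{align*}
P_{t_{N-k},t_k}(\varphi)_0+X_0 = X_0+\varphi_{T-t_k}+\int_{t_k}^{T}\mu(T,s,\widetilde X_s^{t_k,\varphi})\rd s+\int_{t_k}^{T}\sigma(T,s,\widetilde X_s^{t_k,\varphi})\rd W_s = \widetilde X_T^{t_k,\varphi},
\end{align*}
by \eqref{eq.def:tildeX} with $v=T$. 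Hence $g_k(\varphi)=\hE[f(\widetilde X_T^{t_k,\varphi})]$. The plan is therefore to show that $\varphi\mapsto \widetilde X^{t_k,\varphi}_T$ is $\ell$ times Fréchet differentiable from $\Phi_{2T}$ into $L^p(\Omega;\hR^d)$ for every $p\ge2$. We also need the derivative bounds
\begin{align*}
\sup_{\varphi}\ \sup_{v\in[t_k,T]}\hE\big|\langle\nabla^j_\varphi \widetilde X_v^{t_k,\varphi},\otimes_{i=1}^j\psi_i\rangle\big|^p\le C\prod_{i}\|\psi_i\|_\infty^p,
\end{align*}
with $C$ uniform in $k,h$. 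Once this is available, the chain rule (Faà di Bruno) expresses $\langle\nabla^j g_k(\varphi),\otimes\psi_i\rangle$ as expectations of sums of products of $\nabla^r f(\widetilde X_T)$ with derivatives of $\widetilde X_T$. Hölder's inequality and $\|\nabla^r f\|_\infty$ then bound each term by $C\|f\|_{\cC_b^\ell}$. Interchanging differentiation and expectation is justified by dominated convergence in $L^p$.

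For the first derivative, we formally differentiate \eqref{eq.def:tildeX} in direction $\psi$. This gives the linear SVIE
\begin{align*}
\eta_v=\psi_{v-t_k}+\int_{t_k}^v\partial_x\mu(v,s,\widetilde X_s)\eta_s\rd s+\int_{t_k}^v\partial_x\sigma(v,s,\widetilde X_s)\eta_s\rd W_s .
\end{align*}
Its coefficients are bounded by Assumption \ref{assumption}. BDG together with a Volterra-type Gronwall argument applied to $\sup_{s\le v}\hE|\eta_s|^p$ therefore gives $\hE|\eta_v|^p\le C\|\psi\|_\infty^p$, with $C$ depending only on $T$, $p$ and the bounds of the derivatives. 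In particular $C$ is independent of $k$, $h$ and $\varphi$. Higher derivatives $\eta^{(j)}$ satisfy the same linear equation, now with free term $\psi$ replaced by $0$ plus sums of terms such as $\int\partial_x^r\mu(v,s,\widetilde X_s)[\eta^{(j_1)}_s,\dots,\eta^{(j_r)}_s]\rd s$ (and the analogous $\rd W$ terms) involving lower-order derivatives. Induction on $j\le\ell$ with Hölder's inequality gives the moment bounds. The same Gronwall estimate, using the boundedness of $\partial_x\mu$ and $\partial_x\sigma$, gives Lipschitz dependence $\hE|\widetilde X^{\varphi}_v-\widetilde X^{\varphi'}_v|^p\le C\|\varphi-\varphi'\|_\infty^p$.

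The main obstacle is the rigorous identification of $\eta^{(j)}$ as genuine Fréchet derivatives with continuous dependence on $\varphi$, rather than just formal derivatives. The $\ell$-th derivative is delicate because $\partial_x^\ell\mu$ and $\partial_x^\ell\sigma$ are only continuous and bounded, not Lipschitz. My first attempt is to write the remainder $R=\widetilde X^{\varphi+\psi}-\widetilde X^{\varphi}-\eta$, which satisfies a linear SVIE. Its free term is the Taylor remainder $\int_0^1[\partial_x\mu(\cdot,\widetilde X^\varphi+\theta\Delta)-\partial_x\mu(\cdot,\widetilde X^\varphi)]\Delta\,\rd\theta$, where $\Delta=\widetilde X^{\varphi+\psi}-\widetilde X^{\varphi}$. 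This term is $o(\|\psi\|_\infty)$ in $L^p$ by the Lipschitz estimate, the continuity of $\partial_x\mu$, and dominated convergence. The case $j=\ell$ needs the same continuity argument, which yields continuity of $\nabla^\ell g_k$ but no rate. That suffices for membership in $\cC_b^\ell$, and the sup-norm bound comes from the uniform moment estimates above.
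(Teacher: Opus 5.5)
Your proposal is correct and follows the paper's route. You identify $P_{t_{N-k},t_k}(\varphi)_0+X_0=\widetilde X^{t_k,\varphi}_T$, differentiate through the first-variation (tangent) linear SVIE, bound its moments by a Gr\"onwall argument with constants depending only on $T$ and the derivative bounds, and conclude via the chain rule. You also go further than the paper: it treats only $\ell=1$, using second moments via It\^o's isometry, and omits both the higher-order case (where your $p$-th moment BDG bounds and H\"older induction are needed) and the justification that the formal derivatives are genuine continuous Fr\'echet derivatives.
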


\begin{proof} 
For $k \in \{0,1, \dots, N - 1\}$ and $\varphi \in \Phi_{2T}$, we write $\widetilde X = \widetilde X^{t_k,\varphi}$ for short in this proof. Then it follows from \eqref{eq.def:P_rt} that 
\begin{align*}
P_{t_{N-k}, t_{k}}(\varphi)_0 + X_0 
= \varphi_{T-t_{k}} + \int_{t_{k}}^{T} \mu(T, s, \widetilde X_s) \rd s + \int_{t_k}^{T} \sigma(T, s, \widetilde X_s) \rd W_{s} + X_0. 
\end{align*}
We define the tangent process \(\{Z_{v}^{\varphi}\}_{v \in[t_k,T]}\) of $\widetilde X$ by 
\begin{align*}
\< Z_{v}^{\varphi}, \eta\>
&= \eta_{v-t_{k}} + \int_{t_{k}}^{v} \nabla_x \mu(v, s, \widetilde X_s) \< Z_s^{\varphi}, \eta\> \rd s + \int_{t_{k}}^{v} \nabla_x \sigma(v, s, \widetilde X_s)\< Z_s^{\varphi}, \eta\> \rd W_s,
\end{align*}
where $\eta \in \Phi_{2T}$ and $v \in [t_k,T]$. Then one can deduce that 
\begin{align} \label{eq:gketa}
\<\nabla g_k(\varphi), \eta\>
= \mathbb E \bigg[ \nabla f \Big( P_{t_{N-k}, t_{k}}(\varphi)_0 + X_0 \Big) \< Z_{T}^{\varphi}, \eta\> \bigg]. 
\end{align}
Let $\psi_v^\eta := \mathbb{E}[|\< Z_v^{\varphi},\eta\> |^{2}]$. Using the Cauchy--Schwarz inequality and It\^o's isometry, along with Assumption \ref{assumption}, yields 
\begin{align*}
\psi_v^\eta 
\leq 3\|\eta\|_\infty^2 + 3 \int_{t_k}^v (T \| \nabla_x \mu \|_\infty^2 + \|\nabla_x \sigma\|_{\infty}^2 ) \psi_s^\eta \rd s.
\end{align*}
An application of Gr\"onwall's inequality leads to
\begin{align}\label{eq:veta}
\psi_v^\eta
\leq 3 \|\eta\|_\infty^2 \exp \big( 3 (v-t_k) (T \| \nabla_x \mu \|_\infty^2 + \|\nabla_x \sigma\|_{\infty}^2 \big), \qquad \forall\, v \in [t_k,T]. 
\end{align}
As a consequence of the Cauchy--Schwarz inequality, \eqref{eq:gketa}, and \eqref{eq:veta}, one can arrive at 
\begin{align*}
|\<\nabla g_k(\varphi), \eta\>|^2
\leq \|\nabla f\|_\infty^2 \psi_T^\eta \leq C\|\nabla f\|_\infty^2 \|\eta\|_\infty^2. 
\end{align*}
Taking the supremum over $\eta \in \Phi_{2T}$ with $\|\eta\|_\infty\le 1$ leads to $\|\nabla g_k\|_{\infty}\le C\|\nabla f\|_\infty$, which completes the proof when $\ell=1$. The cases $\ell \ge 2$ follow from a similar argument and are omitted for the sake of brevity. 
\end{proof}

\begin{proof} [Proof of Proposition \ref{Prop.ImportantI}] 
For $f\in \cC_b^\ell (\mathbb{R}^d; \mathbb{R})$, define the associated functional $\widetilde{f}:\, \Phi_{2T} \rightarrow \mathbb{R}$ as $\widetilde{f}(\varphi) = f(\varphi_0 + X_0)$, where $\varphi_0$ and $X_0$ denote the initial values of $\varphi$ and $X$, respectively. Clearly, $\widetilde f \in \cC_b^\ell (\Phi_{2T}; \mathbb{R})$. Applying the identities $\widetilde{\mathbf{0}} = Y_0 = \bar{Y}_0$ and the semigroup-type properties \eqref{eq.semigroup:X} and \eqref{eq.semigroup:tildeX}, one obtains 
\begin{align*}
& X_T - X_0 
= Y_T ( 0 ) 
= P_{h, t_{N-1}}\circ \cdots \circ P_{h, t_0} (\widetilde{\mathbf{0}})_0 
= \left( \prod_{j=0}^{N-1} P_{h, t_{N-1-j}} (\widetilde{\mathbf{0}}) \right)_0, \\
& \bar{X}_T - X_0 
= \bar{Y}_T ( 0 ) 
= \bar{P}_{h, t_{N-1}}\circ \cdots \circ \bar{P}_{h, t_0} (\widetilde{\mathbf{0}})_0 
= \left( \prod_{j=0}^{N-1} \bar{P}_{h, t_{N-1-j}} (\widetilde{\mathbf{0}}) \right)_0, 
\end{align*} 
which together with the relation \eqref{eq.redef of P} shows 
\begin{small}
\begin{align*}
& \mathbb{E} f(X_T) 
= \mathbb{E} f \left( \left( \prod_{j=0}^{N-1}{P_{h, t_{N-1-j}} (\widetilde{\mathbf{0}})} \right)_0 + X_0 \right) 
= \mathbb{E} \widetilde{f} \left( \prod_{j=0}^{N-1}{P_{h, t_{N-1-j}} (\widetilde{\mathbf{0}})} \right) 
= \left( \prod_{j=0}^{N-1}{\boldsymbol{P}_{h, t_j}}\widetilde{f} \right) (\widetilde{\mathbf{0}}), \\
& \mathbb{E} f(\bar X_T) 
= \mathbb{E} f \left( \left( \prod_{j=0}^{N-1}{\bar P_{h, t_{N-1-j}} (\widetilde{\mathbf{0}})} \right)_0 + X_0 \right) 
= \mathbb{E} \widetilde{f} \left( \prod_{j=0}^{N-1}{\bar P_{h, t_{N-1-j}} (\widetilde{\mathbf{0}})} \right) 
= \left( \prod_{j=0}^{N-1}{\bar{\boldsymbol{P}}_{h, t_j}}\widetilde{f} \right) (\widetilde{\mathbf{0}}). 
\end{align*} 
\end{small}
Now the weak error can be reformulated as a telescoping sum as follows:\ 
\begin{align} \label{eq.weak error}
\mathbb{E} f(\bar{X}_T) - \mathbb{E} f(X_T) 
&= \left( \prod_{j=0}^{N-1}{\bar{\boldsymbol{P}}_{h, t_j}}\widetilde{f} \right) (\widetilde{\mathbf{0}}) - \left( \prod_{j=0}^{N-1}{\boldsymbol{P}_{h, t_j}}\widetilde{f} \right) (\widetilde{\mathbf{0}}) \notag \\ 
&= \sum_{k=0}^{N-1} \left( \bar{\boldsymbol{P}}_{t_k, 0}\circ ( \bar{\boldsymbol{P}}_{h, t_k}-\boldsymbol{P}_{h, t_k} ) \circ \prod_{j = k+1}^{N-1}{\boldsymbol{P}_{h, t_j}}\widetilde{f} \right) (\widetilde{\mathbf{0}}),
\end{align}
where the last step used the convention $\prod_{j=N}^{N-1}{\boldsymbol{P}_{h, t_j}}\widetilde{f} = \widetilde{f}$.

Recalling \eqref{eq.redef of P}, \eqref{eq.semigroup:X}, and \eqref{eq.def_gk} implies that $\prod_{j = k+1}^{N-1}{\boldsymbol{P}_{h, t_j}}\widetilde{f} (\widetilde{\mathbf{0}}) = g_{k+1}(\widetilde{\mathbf{0}})$ for all $k \in \{0,1, \dots, N - 1\}$. Then it follows from \eqref{eq.weak error} and \eqref{eq.redef of P} that 
\begin{align*}
\left| \mathbb{E} f(\bar{X}_T)-\mathbb{E} f(X_T) \right|
&\leq \sum_{k=0}^{N-1} \Big| \left( \bar{\boldsymbol{P}}_{t_k, 0} \circ \left( \bar{\boldsymbol{P}}_{h, t_k}-\boldsymbol{P}_{h, t_k} \right) g_{k+1} \right) (\widetilde{\mathbf{0}}) \Big| \nonumber \\
&= \sum_{k=0}^{N-1} \Big| \hE \Big[ 
\left( \bar{\boldsymbol{P}}_{h, t_k}-\boldsymbol{P}_{h, t_k} \right) g_{k+1} ( \bar P_{t_k, 0} (\widetilde{\mathbf{0}}) ) \Big] \Big|. 
\end{align*} 
In view of Lemma \ref{lem.def of g} and the fact $\widetilde f \in \cC_b^\ell (\Phi_{2T}; \mathbb{R})$, we have $g_{k} \in \cC_b^{\ell}( \Phi_{2T}; \hR)$ for all $k \in \{1, 2, \ldots, N\}$. Finally, by Assumptions \ref{assum.barP is C^1} and \ref{assum.importantI}, one can conclude that 
\begin{align*}
\left| \mathbb{E} f(\bar{X}_T)-\mathbb{E} f(X_T) \right|
\leq \sum_{k=0}^{N-1} C \left( 1 + \mathbb{E} \left( \big[ \bar P_{t_k, 0} (\widetilde{\mathbf{0}}) \big]_{Lip}^{\nu} \right) \right) h^{q+1} 
\leq C h^{q},
\end{align*}
which completes the proof. 
\end{proof}

Building upon Proposition \ref{Prop.ImportantI}, we now present the fundamental weak convergence theorem, which provides an alternative to Assumption \ref{assum.importantI} and offers more practical criteria for verification. Define the shift operator $\tau_h:\cC ([0, \infty); \hR^d) \to \cC ([0, \infty); \hR^d)$ by $( \tau_h \varphi )_u = \varphi_{h+u} $ for $u \geq 0$. For any $g \in \cC_b^{2q+2} ( \Phi_{2T}; \hR)$ and any $k \in \{ 0, 1, \dots, N-1 \}$, the functions $g(P_{h,t_k}(\varphi))$ and $g(\bar P_{h,t_k}(\varphi))$ admit Taylor expansions around $\tau_h \varphi $ as follows:\ 
\begin{align*}
& g(P_{h,t_k}(\varphi))-g( \tau_h \varphi ) = \sum_{m=1}^{2q+1} \frac{1}{m!} \left\< \nabla^m g( \tau_h \varphi ), (P_{h,t_k}(\varphi)- \tau_h \varphi )^{\otimes m} \right\> + \cR_{2q+2}, \\
& g(\bar P_{h,t_k}(\varphi))-g( \tau_h \varphi ) = \sum_{m=1}^{2q+1} \frac{1}{m!} \left\< \nabla^m g( \tau_h \varphi ), (\bar P_{h,t_k}(\varphi)- \tau_h \varphi )^{\otimes m} \right\> + \bar \cR_{2q+2},
\end{align*}
where the remainders are given by 
\begin{small}
\begin{align*}
&\cR_{2q+2} = \frac{1}{(2q+1)!} \int_0^1 (1-\xi)^{2q+1} \left\<\nabla^{2q+2} g( \tau_h \varphi + \xi (P_{h,t_k}(\varphi)- \tau_h \varphi )), (P_{h,t_k}(\varphi)- \tau_h \varphi )^{\otimes (2q+2)} \right\> \rd \xi,\\
&\bar \cR_{2q+2} = \frac{1}{(2q+1)!} \int_0^1 (1-\xi)^{2q+1} \left\< \nabla^{2q+2} g( \tau_h \varphi + \xi (\bar{P}_{h,t_k}(\varphi)- \tau_h \varphi ) ), (\bar{P}_{h,t_k}(\varphi)- \tau_h \varphi )^{\otimes (2q+2)} \right\> \rd \xi.
\end{align*}
\end{small}
Next, we impose the following assumption to guarantee that Assumption \ref{assum.importantI} holds with $\ell=2q+2$.

\begin{assumption} \label{assum.importantII}
Let $q > 0$ satisfy $2q+1 \in \hN_+$ and $g \in \cC_b^{2q+2} ( \Phi_{2T}; \hR)$. There exists a constant $C$ such that for any $k \in \{ 0, 1, \dots, N-1 \}$, $\varphi \in \Phi_{2T}\cap\cC^1 ([0, \infty); \hR^d)$, $m \in \{1, 2, \cdots, 2q+1\}$, and $\xi \in [0, 1]$, 
\begin{align*}
&\left| \hE \left\< \nabla^m g( \tau_h \varphi ), (P_{h,t_k}(\varphi)- \tau_h \varphi )^{\otimes m} - (\bar P_{h,t_k}(\varphi)- \tau_h \varphi )^{\otimes m} \right\> \right| 
\leq C(1 + [\varphi]_{Lip}^{2q+2})h^{q+1}, \\
&\left| \hE \left\<\nabla^{2q+2} g( \tau_h \varphi + \xi (P_{h,t_k}(\varphi)- \tau_h \varphi )), (P_{h,t_k}(\varphi)- \tau_h \varphi )^{\otimes (2q+2)} \right\> \right| 
\leq C(1 + [\varphi]_{Lip}^{2q+2})h^{q+1}, \\
&\left| \hE \left\<\nabla^{2q+2} g( \tau_h \varphi + \xi (\bar P_{h,t_k}(\varphi)- \tau_h \varphi )), (\bar P_{h,t_k}(\varphi)- \tau_h \varphi )^{\otimes (2q+2)} \right\> \right| 
\leq C(1 + [\varphi]_{Lip}^{2q+2})h^{q+1}, 
\end{align*}
where $C > 0$ depends on $g$ only through its semi-norm $\|g\|_{\cC_b^{2q+2}( \Phi_{2T})}$, but is independent of $\varphi$ and $h$. 
\end{assumption}

Based on the definition \eqref{eq.redef of P}, the triangle inequality, and Assumption \ref{assum.importantII}, we immediately obtain 
\begin{align*}
&\quad\ | (\bar{\boldsymbol{P}}_{h, t_k} - \boldsymbol{P}_{h, t_k}) g(\varphi) | =\left| \hE [g(P_{h,t_k}(\varphi))-g( \tau_h \varphi )] - \hE [g(\bar P_{h,t_k}(\varphi))-g( \tau_h \varphi )] \right|\\
&\leq\sum_{m=1}^{2q+1} \frac{1}{m!} \left | \hE \left[ \left\< \nabla^m g( \tau_h \varphi ), (P_{h,t_k}(\varphi)- \tau_h \varphi )^{\otimes m} - (\bar P_{h,t_k}(\varphi)- \tau_h \varphi )^{\otimes m} \right\> \right] \right | \\
&\quad + | \hE (\cR_{2q+2} ) | + | \hE ( \bar \cR_{2q+2}) |\\
&\le C(1 + [\varphi]_{Lip}^{2q+2})h^{q+1},
\end{align*}
which verifies Assumption \ref{assum.importantI} with $\ell = \nu = 2q+2$. Thus, we can state the following theorem.

\begin{theorem} [Fundamental weak convergence theorem] 
\label{Thm.ImportantII} 
Suppose that Assumptions \ref{assumption}, \ref{assum.barP is C^1}, and \ref{assum.importantII} hold with $\ell = \nu = 2q+2$ and $q > 0$. Then, for any given test function $f \in \mathcal{C}_{b}^{2q+2} (\mathbb{R}^d; \mathbb{R})$, there exists a constant $C > 0$ independent of $h$ such that 
\begin{align*}
\big| \mathbb{E}[f(\bar{X}_T)] - \mathbb{E}[f(X_T)] \big| \leq C h^{q}.
\end{align*} 
\end{theorem}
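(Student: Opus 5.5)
The plan is to reduce the theorem to Proposition \ref{Prop.ImportantI} by checking that Assumption \ref{assum.importantII} implies Assumption \ref{assum.importantI} with $\ell=\nu=2q+2$. The first point is that the functionals the domino argument actually produces are admissible in Assumption \ref{assum.importantII}. In the proof of Proposition \ref{Prop.ImportantI}, the operator $\bar{\boldsymbol{P}}_{h,t_k}-\boldsymbol{P}_{h,t_k}$ acts only on $g_{k+1}$ (and on $\widetilde f$ when $k=N-1$). Lemma \ref{lem.def of g} with $\ell=2q+2$ puts each $g_{k+1}$ in $\cC_b^{2q+2}(\Phi_{2T};\hR)$, with semi-norm bounded by $C\|f\|_{\cC_b^{2q+2}(\hR^d)}$ uniformly in $k$ and $h$. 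The same holds trivially for $\widetilde f$. Because the constant in Assumption \ref{assum.importantII} depends on $g$ only through this semi-norm, the resulting bounds are uniform over the whole family.

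Second, I fix $\varphi\in\Phi_{2T}\cap\cC^1$ and Taylor-expand both $g(P_{h,t_k}(\varphi))$ and $g(\bar P_{h,t_k}(\varphi))$ around $\tau_h\varphi$ to order $2q+1$, using the integral remainders $\cR_{2q+2}$ and $\bar\cR_{2q+2}$ written above. Subtracting the two expansions and taking expectations, the zeroth-order terms $g(\tau_h\varphi)$ cancel. The difference $(\bar{\boldsymbol{P}}_{h,t_k}-\boldsymbol{P}_{h,t_k})g(\varphi)$ then splits into three parts:
\begin{itemize}
\item the sum over $m$ of $\frac{1}{m!}\,\hE\langle\nabla^m g(\tau_h\varphi),\cdot\rangle$ applied to the difference of $m$-fold tensor increments;
\item the expectation of the exact remainder, $\hE\cR_{2q+2}$;
\item the expectation of the numerical remainder, $\hE\bar\cR_{2q+2}$.
\end{itemize}

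The first inequality of Assumption \ref{assum.importantII} bounds each term of the sum. For the remainders, I move the expectation inside the $\xi$-integral by Fubini, which needs an integrability check. The integrand is bounded by $\|\nabla^{2q+2}g\|_\infty\|P_{h,t_k}(\varphi)-\tau_h\varphi\|_\infty^{2q+2}$. Finite moments of this sup-norm follow from the $L^p$ bounds of Assumption \ref{assumption} together with BDG, or can simply be taken as implicit in Assumption \ref{assum.importantII}. The second and third inequalities of Assumption \ref{assum.importantII} then apply pointwise in $\xi$, and integrating against $(1-\xi)^{2q+1}/(2q+1)!$ keeps the bound. Altogether this gives $|(\bar{\boldsymbol{P}}_{h,t_k}-\boldsymbol{P}_{h,t_k})g(\varphi)|\le C(1+[\varphi]_{Lip}^{2q+2})h^{q+1}$, which is Assumption \ref{assum.importantI} with $\nu=2q+2$.

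Finally, I apply Proposition \ref{Prop.ImportantI} with $\ell=2q+2$. Assumption \ref{assum.barP is C^1} with $\nu=2q+2$ provides both the $\cC^1$ regularity of $\bar Y_{t_k}=\bar P_{t_k,0}(\widetilde{\mathbf 0})$ needed to plug it in as $\varphi$, and a uniform bound on $\hE[\bar P_{t_k,0}(\widetilde{\mathbf 0})]_{Lip}^{2q+2}$. Summing $N=T/h$ local errors of size $h^{q+1}$ yields $Ch^q$.

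The step I expect to need the most care is the evaluation at the random path $\bar Y_{t_k}$. Assumption \ref{assum.importantI} is stated for deterministic $\varphi$, so I would condition on $\cF_{t_k}$. This works because $\bar Y_{t_k}$ is $\cF_{t_k}$-measurable while the one-step operators use only Brownian increments on $[t_k,t_{k+1}]$, which are independent of $\cF_{t_k}$. The freezing lemma then gives $\hE[(\bar{\boldsymbol{P}}_{h,t_k}-\boldsymbol{P}_{h,t_k})g_{k+1}(\bar Y_{t_k})]$ as the deterministic bound evaluated at $\bar Y_{t_k}$, after which taking the outer expectation finishes the argument.
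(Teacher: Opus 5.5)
Your proposal is correct and follows the paper's own route. You derive Assumption~\ref{assum.importantI} with $\ell=\nu=2q+2$ from Assumption~\ref{assum.importantII} by Taylor-expanding $g(P_{h,t_k}(\varphi))$ and $g(\bar P_{h,t_k}(\varphi))$ around $\tau_h\varphi$ and bounding the matched terms and the two integral remainders. You then invoke Proposition~\ref{Prop.ImportantI}, whose telescoping argument applies the one-step bound only to the functionals $g_{k+1}$, and Lemma~\ref{lem.def of g} controls these uniformly. Your extra remarks simply make explicit details the paper leaves implicit: Fubini for the remainders, and the independence/freezing argument that lets you evaluate the deterministic one-step bound at the random path $\bar Y_{t_k}$.
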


Compared with Proposition \ref{Prop.ImportantI}, Theorem \ref{Thm.ImportantII} exhibits more advantages in practical applications, as Assumption \ref{assum.importantII} is typically more readily verifiable than Assumption \ref{assum.importantI}. To illustrate this advantage, we apply Theorem \ref{Thm.ImportantII} in the subsequent two sections to establish weak error estimates for the stochastic theta method and the Wong--Zakai approximation.

\section{Stochastic theta method}
\label{sec.STM}
The stochastic theta method exhibits favorable numerical stability properties, whose strong error analysis has been investigated in \cite{Conte2018DCDSB, LiHuangHuWen2021}. 
 In this section, we further apply the fundamental weak convergence theorem (i.e., Theorem \ref{Thm.ImportantII}) to analyze the weak convergence rate of the stochastic theta method for SVIE \eqref{eq.SVIE}, defined by 
\begin{align}
\bar{X}_{t_n}
&= X_0 + \sum_{i=0}^{n-1} \int_{t_i}^{t_{i+1}} \theta \mu(t_{n}, s, \bar{X}_{t_{i+1}}) \rd s + \sum_{i=0}^{n-1} \int_{t_i}^{t_{i+1}} (1-\theta) \mu(t_{n}, s, \bar{X}_{t_i}) \rd s \notag \\
&\quad + \sum_{i=0}^{n-1} \int_{t_i}^{t_{i+1}} \sigma(t_{n}, t_i, \bar{X}_{t_i}) \rd W_s, 
\qquad n = 1,2,\cdots,N, \label{eq:SthetaSVIE} 
\end{align}
where $\theta \in [0,1]$ is a parameter. Here, for the second variable of the drift coefficient $\mu$, we do not discretize it, since the deterministic integrals can be evaluated exactly or efficiently approximated by suitable quadrature rules. In contrast, for the second variable of the diffusion coefficient $\sigma$, we discretize it to achieve high computational efficiency in simulating stochastic integrals. Throughout this section, we always assume that the step size $h \in (0,1)$ is sufficiently small such that the unique solvability condition $h \theta \| \nabla_x \mu\|_{\infty} < 1$ holds. For $t \in [0, T]$, there is a continuous-time version 
\begin{align}
&\bar{X}_t = X_0 + \int_0^t \theta \mu(t, s, \bar{X}_{\bar{s}}) + (1-\theta) \mu(t, s, \bar{X}_{\underline{s}}) \rd s + \int_0^t \sigma(t, \underline{s}, \bar{X}_{\underline{s}}) \rd W_s, \label{eq:SthetaSVIEcontinuous}
\end{align}
where $\underline{s} := \left\lfloor s/h \right\rfloor h$ and $\bar{s} := \left\lceil s/h \right\rceil h$ for $s \in [0,T]$.

Next, we demostrate that the stochastic theta method \eqref{eq:SthetaSVIE} fits into the framework developed in Subsection \ref{sub:FWCT}, which allows us to apply the fundamental weak convergence theorem directly. For $j \in \{0,1,\cdots,N-1\}$, we apply the stochastic theta method to \eqref{eq.def:P_rt} with $r =h$ and $t = t_j$ and define the resulting numerical `semigroup' type operator $\bar{P}_{h,t_j}: \Phi_{2T} \to \Phi_{2T}$ by $\varphi\mapsto \bar{P}_{h,t_j}(\varphi)$ with 
\begin{align} 
\bar{P}_{h, t_j}(\varphi)_u 
&= \varphi_{h+u}+ \int_{t_j}^{t_{j+1}} \theta \mu(t_{j+1}+u, s, \widehat X_{t_{j+1}}^{t_j,\varphi} ) + (1-\theta) \mu(t_{j+1}+u, s, \widehat X_{t_j}^{t_j,\varphi} ) \rd s \notag\\
&\quad + \int_{t_j}^{t_{j+1}} \sigma(t_{j+1}+u, t_j, \widehat X_{t_j}^{t_j,\varphi} ) \rd W_s, \qquad u \geq 0. \label{eq.barP_rt}
\end{align}
Here, $\widehat X_{t_j}^{t_j,\varphi} = X_0 + \varphi_{0}$, and $\widehat X_{t_{j+1}}^{t_j,\varphi}$ denotes the one-step approximation to \eqref{eq.def:tildeX} generated by the stochastic theta method, i.e., 
\begin{align} 
\widehat X_{t_{j+1}}^{t_j,\varphi} 
&= X_0 + \varphi_{h} + \int_{t_j}^{t_{j+1}} \theta \mu(t_{j+1}, s, \widehat X_{t_{j+1}}^{t_j,\varphi}) + (1-\theta) \mu(t_{j+1}, s, \widehat X_{t_j}^{t_j,\varphi}) \rd s \notag \\ 
&\quad + \int_{t_j}^{t_{j+1}} \sigma(t_{j+1},t_j, \widehat X_{t_j}^{t_j,\varphi}) \rd W_s. \label{eq.hatX_v}
\end{align}
Note that there is also a continuous-time version for $v \in [t_j, t_{j+1}]$, 
\begin{small}
\begin{align}
\widehat X_v^{t_j, \varphi}
= X_0 + \varphi_{v-t_j} + \int_{t_j}^v \theta \mu(v, s, \widehat X_{t_{j+1}}^{t_j, \varphi}) + (1-\theta) \mu(v, s, \widehat X_{t_j}^{t_j, \varphi}) \rd s + \int_{t_j}^v \sigma(v, t_j, \widehat X_{t_j}^{t_j, \varphi}) \rd W_s. \label{eq.hatX_vcontinuous}
\end{align}
\end{small}
Let $k \in \{1,2,\cdots,N\}$. According to \eqref{eq.semigroup:tildeX} and \eqref{eq.barP_rt}, one can obtain 
\begin{align*}
\bar{Y}_{t_k}(u) 
&= \bar{P}_{h,t_{k-1}}(\bar{Y}_{t_{k-1}})_{u} \\
&= \bar{Y}_{t_{k-1}}(h+u) + \int_{t_{k-1}}^{t_{k}} \theta \mu(t_{k}+u, s, \widehat X_{t_k}^{t_{k-1},\bar{Y}_{t_{k-1}}}) + (1-\theta) \mu(t_{k}+u, s, \widehat X_{t_{k-1}}^{t_{k-1},\bar{Y}_{t_{k-1}}} ) \rd s \\
&\quad + \int_{t_{k-1}}^{t_{k}} \sigma(t_{k}+u, t_{k-1}, \widehat X_{t_{k-1}}^{t_{k-1},\bar{Y}_{t_{k-1}}} ) \rd W_s \\
&= \bar{Y}_{t_{k-1}}(h+u) + \int_{t_{k-1}}^{t_{k}} \theta \mu(t_{k}+u, s, \bar{X}_{t_{k}}) + (1-\theta) \mu(t_{k}+u, s, \bar{X}_{t_{k-1}}) \rd s \\
&\quad + \int_{t_{k-1}}^{t_{k}} \sigma(t_{k}+u, t_{k-1}, \bar{X}_{t_{k-1}}) \rd W_s, 
\end{align*}
where the facts that $\widehat X_{t_k}^{t_{k-1},\bar{Y}_{t_{k-1}}} = \bar{X}_{t_{k}}$ and $\widehat X_{t_{k-1}}^{t_{k-1},\bar{Y}_{t_{k-1}}} = \bar{X}_{t_{k-1}}$ are also used in the last step. Similarly, 
\begin{align*}
\bar{Y}_{t_{k-1}}(h+u)
&= \bar{Y}_{t_{k-2}}(2h+u) + \int_{t_{k-2}}^{t_{k-1}} \theta \mu(t_{k}+u, s, \bar{X}_{t_{k-1}}) + (1-\theta) \mu(t_{k}+u, s, \bar{X}_{t_{k-2}}) \rd s \\
&\quad + \int_{t_{k-2}}^{t_{k-1}} \sigma(t_{k}+u, t_{k-2}, \bar{X}_{t_{k-2}}) \rd W_s. 
\end{align*} 
Then, iterating the above recurrence relation with the fact $\bar{Y}_0 = \widetilde{\mathbf{0}}$ implies that for any $u \geq 0$, 
\begin{align} \label{eq.barP_tk_0}
\bar{Y}_{t_k}(u) 
= \int_0^{t_k} \theta \mu(t_k+u, s, \bar{X}_{\bar{s}} ) + (1-\theta) \mu(t_k+u, s, \bar{X}_{\underline{s}} ) \rd s + \int_0^{t_k} 
\sigma(t_k+u, \underline{s}, \bar{X}_{\underline{s}} ) \rd W_s. 
\end{align}
Finally, the scheme derived from \eqref{eq:Xtk} coincides with the stochastic theta method \eqref{eq:SthetaSVIE}.

\begin{assumption} \label{assum.theta1}
$\mu \in \cC_b^{1,0,4}([0, T] \times [0, T] \times \mathbb{R}^d; \hR^{d})$ and $\sigma \in \cC_b^{1,1,4}([0, T] \times [0, T] \times \mathbb{R}^d; \hR^{d \times m})$, where the subscript $b$ means that all derivatives of $\mu$ and $\sigma$ with respect to the third variable, up to order $4$, are bounded. Moreover, there is a constant $C := C(T) > 0$ such that for any $x \in \hR^d$ and any $t, t_1, t_2, s, s_1, s_2 \in [0,T]$, 
\begin{align}
& | \sigma (t,s_1,x) - \sigma (t,s_2,x)| 
\leq C |s_1 - s_2| (1+|x|), \label{eq:asp1} \\ 
& | \mu (t_1, s, x) - \mu (t_2, s, x)| + | \sigma (t_1, s, x) - \sigma (t_2, s, x)| 
\leq C |t_1 - t_2| (1+|x|), \label{eq:asp2} \\ 
& | \partial_2 \sigma (t_1, s, x) - \partial_2 \sigma (t_2, s, x)| + | \nabla_x \sigma (t_1, s, x) - \nabla_x \sigma (t_2, s, x)| 
\leq C |t_1 - t_2| (1+|x|). \label{eq:asp3}
\end{align}
\end{assumption}

For example, the coefficients $\mu$ and $\sigma$ given by $\mu(t,s,x) = \sigma(t,s,x) = t-s+x$ or $\mu(t,s,x) = \sigma(t,s,x) = \sin(t-s+x)$ both satisfy Assumption \ref{assum.theta1} in the scalar case. Under Assumption \ref{assum.theta1}, the following proposition can be established using standard arguments in \cite{JourdainPages2025} together with the H\"older and Burkholder--Davis--Gundy inequalities, and its proof is omitted.

\begin{proposition} 
Let $p \geq 2$ and $j \in \{0,1,\dots,N-1\}$. Let $X$ and $\bar{X}$ denote the solutions of \eqref{eq.SVIE} and \eqref{eq:SthetaSVIEcontinuous}, respectively. For $t \in [0,T]$ and $\varphi \in \Phi_{2T} \cap \mathcal{C}^1([0,\infty); \mathbb{R}^d)$, denote by $\widetilde{X}^{t,\varphi}$ and $\widehat{X}^{t_j,\varphi}$ the processes defined in \eqref{eq.def:tildeX} and \eqref{eq.hatX_vcontinuous}, respectively. Then under Assumption \ref{assum.theta1}, there exists a constant $C := C(T) > 0$, independent of $\varphi$ and $t$, such that for all $t \in [0,T]$ and $ t_j \leq v_1 \leq v \leq v_2 \leq t_{j+1}$, 
\begin{align}
& \hE|X_t|^p + \hE|\bar{X}_t|^p \leq C, \label{eq.prop1} \\
& \hE|\widetilde{X}_v^{t_j,\varphi}|^p + \hE|\widehat{X}_v^{t_j,\varphi}|^p \leq C (1 + [\varphi]_{Lip}^p), \label{eq.prop2} \\
& \hE|\widetilde{X}_{v_2}^{t_j,\varphi} - \widetilde{X}_{v_1}^{t_j,\varphi}|^p + \hE|\widehat{X}_{v_2}^{t_j,\varphi} - \widehat{X}_{v_1}^{t_j,\varphi}|^p \leq C (1 + [\varphi]_{Lip}^p) (v_2 - v_1)^{p/2}. \label{eq.prop3}
\end{align}
\end{proposition}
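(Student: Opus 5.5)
We prove the three estimates in order, using only Assumption \ref{assum.theta1} (bounded first derivatives, hence linear growth of $\mu,\sigma$ in $x$), the Burkholder--Davis--Gundy (BDG) and H\"older inequalities, and a Gr\"onwall argument adapted to the Volterra structure.

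\textbf{Step 1: proof of \eqref{eq.prop1}.} For the exact solution $X$ we take $p$-th moments in \eqref{eq.SVIE}. Since $t$ is fixed inside each integral, the stochastic integral $\int_0^t \sigma(t,s,X_s)\,\rd W_s$ is an ordinary It\^o integral in $s$, so BDG applies with the parameter $t$ frozen. H\"older then gives
\begin{align*}
\hE|X_t|^p\le C\Big(|X_0|^p+1+\int_0^t \hE|X_s|^p\,\rd s\Big).
\end{align*}
The usual Picard/localization argument ensures $\sup_t\hE|X_t|^p<\infty$ a priori, and Gr\"onwall's inequality concludes. For $\bar X$ we use \eqref{eq:SthetaSVIEcontinuous} in the same way and obtain
\begin{align*}
\hE|\bar X_t|^p\le C\Big(1+\int_0^t(\hE|\bar X_{\bar s}|^p+\hE|\bar X_{\underline s}|^p)\,\rd s\Big).
\end{align*}
The implicit term $\bar X_{\bar s}$ involves the future grid point. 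We handle it first on the grid: at $t=t_n$, the contribution of the last interval is at most $Ch^p\theta^p\|\nabla_x\mu\|^p_\infty\hE|\bar X_{t_n}|^p$ plus linear growth terms. Since $h\theta\|\nabla_x\mu\|_\infty<1$ and $h$ is small, this term is absorbed into the left-hand side, and a discrete Gr\"onwall inequality bounds $\max_n\hE|\bar X_{t_n}|^p$. Off-grid times then follow directly from the continuous-time formula.

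\textbf{Step 2: proof of \eqref{eq.prop2}.} The only new feature in \eqref{eq.def:tildeX} and \eqref{eq.hatX_vcontinuous} is the forcing term $\varphi_{v-t_j}$. Since $\varphi$ is supported in $[0,2T]$ and $\varphi_{2T}=0$, we have $|\varphi_{v-t_j}|\le[\varphi]_{Lip}\,2T$. Repeating Step 1 with $|X_0|+\|\varphi\|_\infty\le C(1+[\varphi]_{Lip})$ in place of $|X_0|$ gives $C(1+[\varphi]_{Lip}^p)$. For $\widehat X$ we only need one step: first bound $\widehat X_{t_{j+1}}^{t_j,\varphi}$ through \eqref{eq.hatX_v}, again absorbing the implicit term, and then insert that bound into the continuous version.

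\textbf{Step 3: proof of \eqref{eq.prop3}.} This is the main obstacle, because in Volterra equations the kernel parameter $v$ appears inside the integrands. We write
\begin{align*}
\widetilde X_{v_2}-\widetilde X_{v_1}&=(\varphi_{v_2-t_j}-\varphi_{v_1-t_j})+\int_{v_1}^{v_2}\mu(v_2,s,\widetilde X_s)\,\rd s+\int_{t_j}^{v_1}\big(\mu(v_2,s,\widetilde X_s)-\mu(v_1,s,\widetilde X_s)\big)\rd s\\
&\quad+\int_{v_1}^{v_2}\sigma(v_2,s,\widetilde X_s)\,\rd W_s+\int_{t_j}^{v_1}\big(\sigma(v_2,s,\widetilde X_s)-\sigma(v_1,s,\widetilde X_s)\big)\rd W_s.
\end{align*}
We bound each term as follows.
\begin{enumerate}
\item The $\varphi$-term is at most $[\varphi]_{Lip}(v_2-v_1)$.
\item The new-interval integrals are bounded via H\"older/BDG and \eqref{eq.prop2} by $C(1+[\varphi]_{Lip}^p)(v_2-v_1)^{p/2}$.
\item The kernel-difference integrals are controlled by \eqref{eq:asp2} together with \eqref{eq.prop2}, giving $C(1+[\varphi]_{Lip}^p)(v_2-v_1)^p$.
\end{enumerate}
Since $v_2-v_1\le h<1$, all three bounds are dominated by $(v_2-v_1)^{p/2}$. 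For $\widehat X$ the argument is identical: the integrands are evaluated at the already-bounded values $\widehat X_{t_j},\widehat X_{t_{j+1}}$, and the fixed second argument $t_j$ in $\sigma$ causes no difficulty. The delicate points are therefore \eqref{eq:asp2}, which controls the dependence on the first variable, and the fact that all constants depend on $\varphi$ only through $[\varphi]_{Lip}$, which we keep track of throughout.
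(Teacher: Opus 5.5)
Your proof is correct and is essentially the standard argument the paper has in mind when it omits this proof and cites H\"older/BDG-based estimates as in \cite{JourdainPages2025}. The ingredients are moment bounds via BDG with the kernel parameter frozen plus Gr\"onwall, the observation $\|\varphi\|_\infty\le 2T[\varphi]_{Lip}$ (since $\varphi_{2T}=0$) so that all constants depend on $\varphi$ only through $[\varphi]_{Lip}$, and the split of increments into new-interval terms and kernel-difference terms controlled by \eqref{eq:asp2}.

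One detail is worth tightening: the absorption of the implicit term in $\bar X_{t_n}$ and $\widehat X_{t_{j+1}}$. It is cleanest to do this pathwise, $(1-h\theta\|\nabla_x\mu\|_\infty)|\bar X_{t_n}|\le \big|\bar X_{t_n}-\theta\int_{t_{n-1}}^{t_n}\mu(t_n,s,\bar X_{t_n})\,\rd s\big|+Ch$, with the first term on the right depending only on earlier grid values. This uses the solvability condition directly, and it also supplies the a priori finiteness of $\hE|\bar X_{t_n}|^p$ that your absorption in expectation tacitly requires.
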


According to Theorem \ref{Thm.ImportantII}, the first-order weak convergence of the stochastic theta method \eqref{eq:SthetaSVIE} for SVIE \eqref{eq.SVIE} can be obtained by verifying Assumptions \ref{assum.barP is C^1} and \ref{assum.importantII} with $\nu = 4$ and $q = 1$, which are established in Lemmas \ref{lem.theta2} and \ref{lem.theta1}, respectively.

\begin{lemma} \label{lem.theta2}
Let $k \in \{0,1,\cdots,N-1\}$. If Assumption \ref{assum.theta1} holds, then there is a constant $C > 0$ independent of $k$ and $h$ such that $\bar P_{t_k, 0}(\widetilde{\mathbf{0}}) \in \cC^1 ([0, \infty); \hR^d)$ and $\mathbb{E} \left( \big[ \bar P_{t_k, 0} (\widetilde{\mathbf{0}}) \big]_{Lip}^4 \right) \leq C$. 
\end{lemma}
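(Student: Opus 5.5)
We work directly from the explicit representation \eqref{eq.barP_tk_0}. For $k\ge 1$ and $u\ge 0$,
\begin{align*}
\bar P_{t_k,0}(\widetilde{\mathbf{0}})_u = \bar Y_{t_k}(u) = \int_0^{t_k} \big[\theta \mu(t_k+u, s, \bar X_{\bar s}) + (1-\theta)\mu(t_k+u,s,\bar X_{\underline s})\big] \rd s + \int_0^{t_k} \sigma(t_k+u,\underline s,\bar X_{\underline s})\rd W_s ,
\end{align*}
while for $k=0$ the path is $\widetilde{\mathbf 0}$ and there is nothing to prove. The stochastic integral is a finite sum $\sum_{i=0}^{k-1}\sigma(t_k+u,t_i,\bar X_{t_i})(W_{t_{i+1}}-W_{t_i})$, so for fixed $\omega$ it is a finite combination of deterministic functions of $u$ with random coefficients. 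We may therefore differentiate in $u$ pathwise, without any stochastic Fubini argument.

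By Assumption \ref{assum.theta1}, $\mu$ and $\sigma$ are $\cC^1$ in their first variable. Differentiating under the $\rd s$-integral and termwise in the sum gives
\begin{align*}
\partial_u \bar Y_{t_k}(u) = \int_0^{t_k}\big[\theta\partial_1\mu(t_k+u,s,\bar X_{\bar s})+(1-\theta)\partial_1\mu(t_k+u,s,\bar X_{\underline s})\big]\rd s + \sum_{i=0}^{k-1}\partial_1\sigma(t_k+u,t_i,\bar X_{t_i})\Delta W_i .
\end{align*}
Continuity of $\partial_1\mu$ and $\partial_1\sigma$ makes this derivative continuous in $u$. This gives $\bar P_{t_k,0}(\widetilde{\mathbf{0}})\in\cC^1$. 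One subtlety is the zero extension of $\mu$ and $\sigma$ outside $[0,2T]^2$. Either we interpret the extension so that it remains $\cC^1$ (the paper asserts that the extension retains regularity), or we note that only $u$ with $t_k+u\le 2T$ matters because the support lies in $[0,2T]$.

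For the Lipschitz bound we use $[\varphi]_{Lip}\le\sup_u|\dot\varphi_u|$ and estimate $\hE\sup_{u}|\partial_u\bar Y_{t_k}(u)|^4$. The drift part is controlled by a deterministic constant, using the linear growth bound implied by \eqref{eq:asp2} together with \eqref{eq.prop1}. The main obstacle is the supremum over $u$ inside the stochastic sum, since moment bounds at fixed $u$ do not directly control the supremum. To handle it, write $\partial_1\sigma(t_k+u,\cdot,\cdot)$ as $\partial_1\sigma(t_k,\cdot,\cdot)$ plus an increment. The base term at $u=0$ has a fourth moment bounded by BDG, its bounded integrand (or linear growth) and \eqref{eq.prop1}. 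For the increment, bound the $u$-variation of $M(u):=\sum_i \partial_1\sigma(t_k+u,t_i,\bar X_{t_i})\Delta W_i$ via Kolmogorov's continuity criterion, or via a Sobolev embedding $W^{1,p}\subset \cC$ in $u$. Assumption \ref{assum.theta1} gives only $\cC^1$ in $t$ for $\sigma$ (and Lipschitz-in-$t$ control \eqref{eq:asp3} of $\partial_2\sigma$ and $\nabla_x\sigma$), not Lipschitz control of $\partial_1\sigma$ itself. The first attempt is therefore to invoke a Lipschitz-in-$t$ estimate of $\partial_1\sigma$ of type \eqref{eq:asp2}, by reading $\cC_b^{1,1,4}$ as giving bounded, uniformly continuous derivatives.

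If that is insufficient, the fallback is to bound the Lipschitz constant by a difference-quotient argument applied directly to $\bar Y_{t_k}$. By \eqref{eq:asp2}, BDG and \eqref{eq.prop1},
\begin{align*}
\hE|\bar Y_{t_k}(u_1)-\bar Y_{t_k}(u_2)|^p\le C|u_1-u_2|^p \qquad\text{for every } p .
\end{align*}
Using Garsia--Rodemich--Rumsey with large $p$ then gives the fourth moment of a H\"older-type constant. Upgrading this to a genuine Lipschitz constant requires the derivative route above, so in the end we expect to rely on $\partial_1\sigma$ being Lipschitz in $t$ together with Kolmogorov applied to $M$. Uniformity in $k$ and $h$ follows because every constant depends only on $T$ and the coefficient bounds.
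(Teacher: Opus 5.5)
Your route is the same as the paper's: start from \eqref{eq.barP_tk_0}, differentiate in $u$, and bound $\hE\sup_u|\partial_u\bar Y_{t_k}(u)|^4$. The $\cC^1$ part is fine. Observing that the stochastic integral is the finite sum $\sum_{i<k}\sigma(t_k+u,t_i,\bar X_{t_i})\Delta W_i$ is a cleaner justification than the paper's Fubini step. The gap is in the Lipschitz bound, and your proposal does not close it. Everything reduces to
\[
\hE\sup_u|M(u)|^4\le C,\qquad M(u)=\sum_{i<k}\partial_1\sigma(t_k+u,t_i,\bar X_{t_i})\Delta W_i ,
\]
and you end by invoking a Lipschitz-in-$t$ bound on $\partial_1\sigma$ that Assumption \ref{assum.theta1} does not provide. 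Reading $\cC_b^{1,1,4}$ as giving \emph{bounded, uniformly continuous} derivatives does not help, because Kolmogorov's criterion (Lemma \ref{lem.boundedness of moments}) needs a quantitative modulus in $u$. Your GRR fallback only yields H\"older constants of order $<1$, as you note yourself. The paper's proof passes over the same point: it cites H\"older, BDG and \eqref{eq.prop1}, but BDG with $u$ frozen only gives $\sup_u\hE|M(u)|^4\le C$, not $\hE\sup_u|M(u)|^4\le C$, since $u$ is a parameter of the integrand, not the time variable of the martingale. So you have located a real difficulty, but neither your argument nor the paper's resolves it under the stated hypotheses.

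The missing ingredient is a quantitative modulus of $\partial_1\sigma$ in its first variable, for instance $|\partial_1\sigma(t_1,s,x)-\partial_1\sigma(t_2,s,x)|\le C|t_1-t_2|^{\gamma}(1+|x|)$ for some $\gamma\in(0,1]$. With this, BDG, H\"older and \eqref{eq.prop1} give $\hE|M(u_1)-M(u_2)|^p\le C_p|u_1-u_2|^{\gamma p}$ for every $p\ge 2$. Kolmogorov with $p>1/\gamma$ (exactly Lemma \ref{lem.boundedness of moments} when $\gamma=1$), together with $\hE|M(0)|^p\le C_p$, then yields $\hE\sup_u|M(u)|^4\le C$ uniformly in $k$ and $h$. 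The drift part is handled by moving the supremum inside the $\rd s$-integral and using \eqref{linear growth}.

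Without such a hypothesis, \eqref{eq:asp3} only gives Lipschitz continuity of $\partial_1\sigma$ in $s$ and $x$, with constants of linear growth in $x$. For state-independent $\sigma$ this already suffices: Abel summation gives $\sup_u|M(u)|\le C\sup_{t\le T}|W_t|$. For multiplicative noise, however, the same summation by parts produces $\sum_i|\bar X_{t_i}-\bar X_{t_{i-1}}|$, which is of order $h^{-1/2}$, so it does not close the argument.

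Finally, your second way around the zero extension is not correct. $\cC^1$ on $[0,\infty)$ requires $\mu$, $\sigma$, $\partial_1\mu$ and $\partial_1\sigma$ to vanish at $t=2T$; otherwise $\bar Y_{t_k}$ jumps or has a kink at $u=2T-t_k$. You must therefore rely on the paper's convention that the extension retains the regularity.
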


\begin{proof}
In view of the relation $ \bar{P}_{t_k,0}(\widetilde{\mathbf{0}}) = \bar{Y}_{t_k}$ and \eqref{eq.barP_tk_0}, we have that for any $u \geq 0$, 
\begin{align*}
\bar P_{t_k, 0}(\widetilde{\mathbf{0}})_u 
= \int_0^{t_k} \theta \mu(t_k+u, s, \bar{X}_{\bar{s}} ) + (1-\theta) \mu(t_k+u, s, \bar{X}_{\underline{s}} ) \rd s + \int_0^{t_k} 
\sigma(t_k+u, \underline{s}, \bar{X}_{\underline{s}} ) \rd W_s. 
\end{align*}
Then, it follows from Fubini's theorem that 
\begin{align*}
\bar P_{t_k, 0}(\widetilde{\mathbf{0}})_u 
&= \int_0^{t_k} \int_0^u \theta \partial_1 \mu(t_k+v, s, \bar{X}_{\bar{s}} ) + (1-\theta) \partial_1 \mu(t_k+v, s, \bar{X}_{\underline{s}} ) \rd v \rd s \\
&\quad + \int_0^{t_k} \theta \mu(t_k, s, \bar{X}_{\bar{s}} ) + (1-\theta) \mu(t_k, s, \bar{X}_{\underline{s}} ) \rd s \\
&\quad + \int_0^{t_k} \int_0^u \partial_1 \sigma(t_k+v, \underline{s}, \bar{X}_{\underline{s}} ) \rd v \rd W_s + \int_0^{t_k} \sigma(t_k, \underline{s}, \bar{X}_{\underline{s}} ) \rd W_s \\
&= \int_0^u \int_0^{t_k} \theta \partial_1 \mu(t_k+v, s, \bar{X}_{\bar{s}} ) + (1-\theta) \partial_1 \mu(t_k+v, s, \bar{X}_{\underline{s}} ) \rd s \rd v \\
&\quad + \int_0^{t_k} \theta \mu(t_k, s, \bar{X}_{\bar{s}} ) + (1-\theta) \mu(t_k, s, \bar{X}_{\underline{s}} ) \rd s \\
&\quad + \int_0^u \int_0^{t_k} \partial_1 \sigma(t_k+v, \underline{s}, \bar{X}_{\underline{s}} ) \rd W_s \rd v + \int_0^{t_k} \sigma(t_k, \underline{s}, \bar{X}_{\underline{s}} ) \rd W_s,
\end{align*}
which implies that $\bar P_{t_k, 0}(\widetilde{\mathbf{0}}) \in \cC^1 ([0, \infty); \hR^d)$ and 
\begin{align*}
\frac{\rd}{\rd u}\bar P_{t_k, 0}(\widetilde{\mathbf{0}})_u
&= \int_0^{t_k} \theta \partial_1 \mu(t_k+u, s, \bar{X}_{\bar{s}} ) + (1-\theta) \partial_1 \mu(t_k+u, s, \bar{X}_{\underline{s}} ) \rd s \\
&\quad + \int_0^{t_k} \partial_1 \sigma(t_k+u, \underline{s}, \bar{X}_{\underline{s}} ) \rd W_s.
\end{align*}
Hence, using Assumption \ref{assum.theta1}, H\"older's inequality, the Burkholder--Davis--Gundy inequality, and \eqref{eq.prop1} yields 
\begin{align*}
\mathbb{E} \left( \big[ \bar P_{t_k, 0} (\widetilde{\mathbf{0}}) \big]_{Lip}^4 \right)
= \hE \bigg[ \Big( \sup_{u \geq 0} \Big| \frac{\rd}{\rd u}\bar P_{t_k, 0}(\widetilde{\mathbf{0}})_u \Big| \Big)^4 \bigg] 
\leq C.
\end{align*}
The proof is completed. 
\end{proof}

\begin{lemma} \label{lem.theta1}
Let $g \in \cC_b^{4} ( \Phi_{2T}; \hR)$. Assume that Assumption \ref{assum.theta1} holds. Then there exists a constant $C$ such that for any $k \in \{ 0, 1, \dots, N-1 \}$, $\varphi \in \Phi_{2T}\cap\cC^1 ([0, \infty); \hR^d)$, $m \in \{1, 2, 3\}$, and $\xi \in [0, 1],$
\begin{align}
&\left| \hE \left\< \nabla^m g( \tau_h \varphi ), (P_{h,t_k}(\varphi)- \tau_h \varphi )^{\otimes m} - (\bar P_{h,t_k}(\varphi)- \tau_h \varphi )^{\otimes m} \right\> \right| 
\leq C(1 + [\varphi]_{Lip}^4)h^{2}, \label{eq.2order1} \\
&\left| \hE \left\<\nabla^{4} g( \tau_h \varphi + \xi (P_{h,t_k}(\varphi)- \tau_h \varphi )), (P_{h,t_k}(\varphi)- \tau_h \varphi )^{\otimes 4} \right\> \right| 
\leq C(1 + [\varphi]_{Lip}^4)h^{2}, \label{eq.2order2} \\
&\left| \hE \left\<\nabla^{4} g( \tau_h \varphi + \xi (\bar P_{h,t_k}(\varphi)- \tau_h \varphi )), (\bar P_{h,t_k}(\varphi)- \tau_h \varphi )^{\otimes 4} \right\> \right| 
\leq C(1 + [\varphi]_{Lip}^4)h^{2}, \label{eq.2order3} 
\end{align}
where $C > 0$ depends on $g$ only through its semi-norm $\|g\|_{\cC_b^4 ( \Phi_{2T})}$, but is independent of $\varphi$ and $h$. 
\end{lemma}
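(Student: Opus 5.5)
We write the one-step increments as
\[
\Delta := P_{h,t_k}(\varphi)-\tau_h\varphi = A + M,\qquad \bar\Delta := \bar P_{h,t_k}(\varphi)-\tau_h\varphi = \bar A + \bar M,
\]
where for $u\ge0$
\begin{align*}
A_u&=\int_{t_k}^{t_{k+1}}\mu(t_{k+1}+u,s,\widetilde X_s)\,\rd s, & M_u&=\int_{t_k}^{t_{k+1}}\sigma(t_{k+1}+u,s,\widetilde X_s)\,\rd W_s,
\end{align*}
with $\widetilde X=\widetilde X^{t_k,\varphi}$. The numerical pieces $\bar A,\bar M$ are read off from \eqref{eq.barP_rt}, with $\widehat X=\widehat X^{t_k,\varphi}$. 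By \eqref{eq.prop2}, BDG and Assumption \ref{assum.theta1}, we have for all $p\ge2$
\[
\hE\|A\|_\infty^p+\hE\|\bar A\|_\infty^p\le C(1+[\varphi]_{Lip}^p)h^p,\qquad \hE\|M\|_\infty^p+\hE\|\bar M\|_\infty^p\le C(1+[\varphi]_{Lip}^p)h^{p/2}.
\]
The sup over $u$ is handled by writing $M_u=M_0+\int_0^u\partial_1(\cdots)$, as in the proof of Lemma \ref{lem.theta2}, using the $t$-Lipschitz bounds \eqref{eq:asp2}--\eqref{eq:asp3}. Consequently $\hE\|\Delta\|_\infty^4+\hE\|\bar\Delta\|_\infty^4\le C(1+[\varphi]_{Lip}^4)h^2$. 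Since $\nabla^4 g$ is bounded, this immediately gives \eqref{eq.2order2} and \eqref{eq.2order3}.

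For \eqref{eq.2order1} we treat $m=1,2,3$ separately. For $m=3$, expand $\Delta^{\otimes3}-\bar\Delta^{\otimes3}$. Every term containing an $A$ or $\bar A$ factor is $O(h\cdot h^{1/2}\cdot h^{1/2})=O(h^2)$ in $L^1$ by Hölder. The remaining term $\<\nabla^3g(\tau_h\varphi),M^{\otimes3}-\bar M^{\otimes3}\>$ is handled as follows. Note that $\nabla^3 g(\tau_h\varphi)$ is deterministic. Write $M-\bar M=\int_{t_k}^{t_{k+1}}[\sigma(\cdot,s,\widetilde X_s)-\sigma(\cdot,t_k,X_0+\varphi_0)]\,\rd W_s$; this has $L^p$ size $h^{1/2}\cdot h^{1/2}(1+[\varphi]_{Lip})$ by \eqref{eq:asp1} and \eqref{eq.prop3}, since $\widetilde X_{t_k}=X_0+\varphi_0$. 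The telescoped difference is then bounded by $h^{1/2}\cdot h^{1/2}\cdot h\cdot(\ldots)=O(h^2)$.

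For $m=1$, the martingale parts vanish in expectation because the test functional is deterministic, so we must bound $\<\nabla g(\tau_h\varphi),\hE(A-\bar A)\>$. We have
\[
\hE(A-\bar A)_u=\int_{t_k}^{t_{k+1}}\hE\big[\mu(t_{k+1}+u,s,\widetilde X_s)-\theta\mu(\cdot,s,\widehat X_{t_{k+1}})-(1-\theta)\mu(\cdot,s,X_0+\varphi_0)\big]\,\rd s.
\]
Each integrand difference is $O(h^{1/2})$ in $L^1$ by \eqref{eq.prop3}, which is not enough. To gain the extra $h^{1/2}$, Taylor expand $\mu$ in $x$ around $X_0+\varphi_0$ to second order. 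The first-order term involves $\hE[\widetilde X_s-X_0-\varphi_0]$ and $\hE[\widehat X_{t_{k+1}}-X_0-\varphi_0]$. The stochastic integrals have zero mean, and what remains is $\varphi_{s-t_k}-\varphi_0$ plus drift integrals, which is $O(([\varphi]_{Lip}+1)h)$. This is precisely where the Lipschitz constant of $\varphi$ enters. The second-order remainder is $O(h)$ by \eqref{eq.prop3}. Integrating over the interval of length $h$ gives $O(h^2)$, uniformly in $u$.

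For $m=2$, the cross terms $A\otimes M$ are not trivially $O(h^2)$, since they are only $O(h^{3/2})$; this is the main obstacle. I would expect it to be resolved by freezing: replace $\widetilde X_s$ and $\widehat X$ in $A$ and $\bar A$ by $X_0+\varphi_0$, at cost $h\cdot h^{1/2}$ in $L^2$. Multiplied by $\|M\|_{L^2}\sim h^{1/2}$, this gives $h^2$. After freezing, $A$ is deterministic, so $\hE\<\nabla^2g,A\otimes M\>=0$. The $A\otimes A$ terms are already $O(h^2)$.

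For the $M\otimes M$ term we need $\hE[M\otimes M-\bar M\otimes\bar M]$ against the deterministic bilinear form $\nabla^2 g(\tau_h\varphi)$. By Itô isometry (polarized, with a separate $u,u'$ in each slot), this equals
\[
\int_{t_k}^{t_{k+1}}\hE\big[\sigma(a,s,\widetilde X_s)\sigma(b,s,\widetilde X_s)^\top-\sigma(a,t_k,X_0+\varphi_0)\sigma(b,t_k,X_0+\varphi_0)^\top\big]\,\rd s.
\]
Taylor expanding the product in $(s,x)$ as in the $m=1$ case, the first-order term in $x$ again has mean $O((1+[\varphi]_{Lip})h)$, the second-order term is $O(h)$, and the $s$-dependence is $O(h)$ by \eqref{eq:asp1}. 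Integrating then yields $O(h^2)$.

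To apply this to the bilinear form on $\Phi_{2T}$, rather than pointwise in $u$, I would represent $\<\nabla^2 g,M\otimes M\>$ as a double stochastic integral via the Itô formula for the $\Phi_{2T}$-valued martingale $v\mapsto\int_{t_k}^v\sigma(t_{k+1}+\cdot,s,\cdot)\,\rd W_s$. This is justified because that process has $C^1$-in-$u$ integrands. The quadratic-variation term then reduces to the expression above, with the bilinear form evaluated on the columns of $\sigma(t_{k+1}+\cdot,s,\cdot)$.
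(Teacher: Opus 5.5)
Your proof is correct. It shares the paper's architecture: split each increment into a drift part $A$ and a stochastic-integral part $M$; bound every term with a drift factor by sup-norm moment estimates; use It\^o's isometry plus \eqref{eq:asp1} for $M\otimes M$; telescope with $\hE\|M-\bar M\|_{L^\infty}^2=O(h^2)$ for $m=3$; and use fourth-moment bounds for \eqref{eq.2order2}--\eqref{eq.2order3}. Several steps are done differently, though.

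\emph{Exact drift term for $m=1$, and the $x$-dependence in $M\otimes M$.} The paper derives the differential \eqref{eq.dtildeX} of $\widetilde X$ and applies It\^o's formula to $G(\widetilde X_\alpha)$. You instead Taylor expand about the deterministic point $X_0+\varphi_0$ and read $\hE[\widetilde X_s-X_0-\varphi_0]=O((1+[\varphi]_{Lip})h)$ directly off the Volterra equation. Your route is more elementary and never needs \eqref{eq.dtildeX}. The paper's generator-type computation is the more systematic device if one later wants higher-order expansions.

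\emph{Cross terms for $m=2$.} The paper writes $A\otimes M-\bar A\otimes\bar M=A\otimes(M-\bar M)+(A-\bar A)\otimes\bar M$ and uses the strong one-step rates $O(h)$ and $O(h^{3/2})$. You freeze both drifts at the deterministic path $A^0_u=\int_{t_k}^{t_{k+1}}\mu(t_{k+1}+u,s,X_0+\varphi_0)\,\rd s$, at cost $O(h^{3/2})$ in $L^2$, so that $\hE\<\nabla^2 g(\tau_h\varphi),A^0\otimes M\>=0$. Both give $h^2$.

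\emph{Sup-in-$u$ bounds.} You obtain these by stochastic Fubini, as in Lemma \ref{lem.theta2}, rather than by the Kolmogorov argument of Lemma \ref{lem.boundedness of moments}. For $M-\bar M$ this uses the $t$-Lipschitz bound \eqref{eq:asp3} on the integrand difference, which you cite.

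\emph{It\^o isometry.} You explicitly address why It\^o's isometry holds for a general bounded bilinear form on $\Phi_{2T}$, via the $C^1$-in-$u$ integrands; the paper uses this without comment.

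One bookkeeping note. Since $\sigma$ is unbounded, the $x$-derivatives of $\<\nabla^2 g(\tau_h\varphi),\sigma(t_{k+1}+\cdot,s,x)^{\otimes 2}\>$ grow linearly in $x$. This costs an extra factor $1+|X_0+\varphi_0|\le C(1+[\varphi]_{Lip})$, using that $\varphi$ vanishes at $2T$. Your stated $O((1+[\varphi]_{Lip})h)$ for that first-order term should therefore read $O((1+[\varphi]_{Lip}^2)h)$, which still stays within the $1+[\varphi]_{Lip}^4$ budget.
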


\begin{proof}
Throughout this proof, we denote $\widetilde{X}:=\widetilde{X}^{t_k,\varphi}$ and $\widehat{X}:=\widehat{X}^{t_k,\varphi}$ for short. It follows from \eqref{eq.def:tildeX} that for $v \in [t_k, t_{k+1} )$ and sufficiently small $\varepsilon > 0$, 
\begin{align*}
\widetilde{X}_{v+\varepsilon}-\widetilde{X}_v 
&= \varphi_{v+\varepsilon -t_k}-\varphi_{v-t_k}+ \int_v^{v+\varepsilon} \mu(v+\varepsilon, s, \widetilde{X}_s) \rd s + \int_{t_k}^v \mu(v+\varepsilon, s, \widetilde{X}_s) - \mu(v, s, \widetilde{X}_s) \rd s \\ 
&\quad + \int_v^{v+\varepsilon} \sigma(v+\varepsilon, s, \widetilde{X}_s) \rd W_s + \int_{t_k}^v \sigma(v+\varepsilon, s, \widetilde{X}_s) - \sigma(v, s, \widetilde{X}_s) \rd W_s, 
\end{align*}
which implies the differential 
\begin{align} 
\rd \widetilde{X}_v 
&= \dot{\varphi}_{v-t_k} \rd v + \mu(v, v, \widetilde{X}_v) \rd v + \left( \int_{t_k}^v \partial_1 \mu(v, s, \widetilde{X}_s) \rd s \right) \rd v \notag \\ 
&\quad + \sigma(v, v, \widetilde{X}_v) \rd W_v + \left( \int_{t_k}^v \partial_1 \sigma(v, s, \widetilde{X}_s) \rd W_s \right) \rd v \label{eq.dtildeX}. 
\end{align}
Applying the classical It\^o formula to $F \in \cC^2 (\hR^d; \hR)$ yields that for $v \in [t_k, t_{k+1} )$, 
\begin{align} 
\rd F (\widetilde{X}_v)
&= \nabla F( \widetilde{X}_v) \big( \dot{\varphi}_{v-t_k} + \mu(v, v, \widetilde{X}_v) \big) \rd v + \nabla F ( \widetilde{X}_v) \sigma(v, v, \widetilde{X}_v) \rd W_v \nonumber \\ 
&\quad + \nabla F (\widetilde{X}_v) \left( \int_{t_k}^v \partial_1 \mu(v, s, \widetilde{X}_s) \rd s + \int_{t_k}^v \partial_1 \sigma(v, s, \widetilde{X}_s) \rd W_s \right) \rd v \nonumber \\ 
&\quad + \frac{1}{2} \mbox{trace} \big( \sigma(v, v, \widetilde{X}_v)^{\top} \nabla^2F (\widetilde{X}_v) \sigma(v, v, \widetilde{X}_v) \big) \rd v. \label{eq.dpsi}
\end{align}

\textit{Proof of \eqref{eq.2order1} with $m = 1$}. Using \eqref{eq.def:P_rt} and \eqref{eq.barP_rt} shows 
\begin{align*}
\hE [ P_{h,t_k}(\varphi)- \tau_h \varphi ] 
&= \hE \int_{t_k}^{t_{k+1}} \mu( t_{k+1}+\cdot, s, \widetilde{X}_s) \rd s, \\ 
\hE [\bar P_{h,t_k}(\varphi)- \tau_h \varphi ] 
&= \hE \int_{t_k}^{t_{k+1}} \theta \mu(t_{k+1}+\cdot, s, \widehat X_{t_{k+1}} ) + (1-\theta) \mu(t_{k+1}+\cdot, s, \widehat X_{t_k} ) \rd s, 
\end{align*} 
which implies 
\begin{align}
&\quad\, \left | \hE \left[ \left\< \nabla g( \tau_h \varphi ), (P_{h,t_k}(\varphi)- \tau_h \varphi ) - (\bar P_{h,t_k}(\varphi)- \tau_h \varphi ) \right\> \right] \right | \notag \\
&\leq \left| \int_{t_k}^{t_{k+1}} \hE \big\< \nabla g( \tau_h \varphi ), \theta \big( \mu(t_{k+1}+\cdot, s, \widetilde X_{s}) - \mu(t_{k+1}+\cdot, s, \widehat X_{t_{k+1}}) \big) \big\> \rd s \right| \notag \\
&\quad + \left| \int_{t_k}^{t_{k+1}} \hE \big\< \nabla g( \tau_h \varphi ), (1-\theta) \big( \mu(t_{k+1}+\cdot, s, \widetilde X_{s}) - \mu(t_{k+1}+\cdot, s, \widehat X_{t_k}) \big) \big\> \rd s \right| \notag \\
&=: (\mathrm{I})_1 + (\mathrm{I})_2. \label{eq.I1I2}
\end{align}

For $(\mathrm{I})_1$, one can obtain 
\begin{small}
\begin{align*}
&\quad\ (\mathrm{I})_1 \\ 
&\leq \left| \int_{t_k}^{t_{k+1}} \hE \< \nabla g( \tau_h \varphi ), \mu(t_{k+1}+\cdot, s, \widetilde X_{s}) - \mu(t_{k+1}+\cdot, s, \widehat X_{t_k}) \> \rd s \right| \\
& + \left| \int_{t_k}^{t_{k+1}} \hE \< \nabla g( \tau_h \varphi ), \nabla_x \mu(t_{k+1}+\cdot, s, \widehat X_{t_k}) (\widehat X_{t_{k+1}} - \widehat X_{t_k}) \> \rd s \right| \\
& + \left| \int_{t_k}^{t_{k+1}} \hE \left\< \nabla g( \tau_h \varphi ), \int_0^1 \<\nabla_{xx} \mu(t_{k+1}+\cdot, s, \widehat X_{t_k} + \xi (\widehat X_{t_{k+1}} - \widehat X_{t_k}) ), (\widehat X_{t_{k+1}} - \widehat X_{t_k})^{\otimes 2} \> (1-\xi) \rd \xi 
\right\> \rd s \right| \\
&=: (\mathrm{I})_{1,1} + (\mathrm{I})_{1,2} + (\mathrm{I})_{1,3}, 
\end{align*} 
\end{small}
where we used the Taylor expansion 
\begin{align*}
&\quad\ \mu(t_{k+1}+\cdot, s, \widehat X_{t_{k+1}}) \\
&= \mu(t_{k+1}+\cdot, s, \widehat X_{t_k}) + \nabla_x \mu(t_{k+1}+\cdot, s, \widehat X_{t_k}) (\widehat X_{t_{k+1}} - \widehat X_{t_k}) \\
&\quad + \int_0^1 \<\nabla_{xx} \mu(t_{k+1}+\cdot, s, \widehat X_{t_k} + \xi (\widehat X_{t_{k+1}} - \widehat X_{t_k}) ), (\widehat X_{t_{k+1}} - \widehat X_{t_k})^{\otimes 2} \> (1-\xi) \rd \xi. 
\end{align*} 
To estimate $(\mathrm{I})_{1,1}$, we define $G(x) := \< \nabla g( \tau_h \varphi ), \mu(t_{k+1}+\cdot, s, x) \>$ for $x \in\mathbb{R}^d$. Then $G \in \cC_b^2 (\hR^d; \hR)$ and 
\begin{align*}
& \nabla G(x) (y_1) = \< \nabla g( \tau_h \varphi ), \nabla_x \mu(t_{k+1}+\cdot, s, x)y_1 \>,\qquad y_1 \in \mathbb{R}^d, \\
& \nabla^2 G(x) (y_1, y_2) = \< \nabla g( \tau_h \varphi ), \<\nabla_{xx} \mu(t_{k+1}+\cdot, s, x),y_1\otimes y_2 \>\>,\qquad y_1, y_2 \in \mathbb{R}^d.
\end{align*}
In view of \eqref{eq.dpsi}, one has that for $\alpha \in [t_k, t_{k+1} )$, 
\begin{align*} 
\rd G (\widetilde{X}_\alpha)
&= \nabla G ( \widetilde{X}_\alpha) \big(\dot{\varphi}_{\alpha-t_k} + \mu(\alpha, \alpha, \widetilde{X}_\alpha) \big) \rd \alpha + \nabla G ( \widetilde{X}_\alpha) \sigma(\alpha, \alpha, \widetilde{X}_\alpha) \rd W_\alpha \\ 
&\quad + \nabla G (\widetilde{X}_\alpha) \left( \int_{t_k}^\alpha \partial_1 \mu(\alpha, s, \widetilde{X}_s) \rd s + \int_{t_k}^\alpha \partial_1 \sigma(\alpha, s, \widetilde{X}_s) \rd W_s \right) \rd \alpha \\ 
&\quad + \frac{1}{2} \mbox{trace} \big( \sigma (\alpha, \alpha, \widetilde{X}_\alpha)^{\top} \nabla^2 G (\widetilde{X}_\alpha) \sigma (\alpha, \alpha, \widetilde{X}_\alpha) \big) \rd \alpha. 
\end{align*}
From \eqref{eq:asp2}, one obtains that for any $x \in \hR^d$ and any $t, s \in [0,T]$,
\begin{align} \label{linear growth}
|\partial_1 \mu (t,s,x)| + |\partial_1 \sigma (t,s,x)| \leq C(1+|x|).
\end{align}
Since $G \in \cC_b^2 (\hR^d; \hR)$, $\varphi$ is Lipschitz continuous, together with Assumption \ref{assum.theta1}, \eqref{linear growth}, and \eqref{eq.prop2}, it holds that
\begin{align*}
\left| \hE \left[ \int_{t_k}^s \rd G (\widetilde{X}_\alpha) \right] \right| 
&\leq \hE \left| \int_{t_k}^s \nabla G ( \widetilde{X}_\alpha) \big(\dot{\varphi}_{\alpha-t_k} + \mu(\alpha, \alpha, \widetilde{X}_\alpha) \big) \rd \alpha \right| \\
&\quad + \hE \left| \int_{t_k}^s \nabla G (\widetilde{X}_\alpha) \left( \int_{t_k}^\alpha \partial_1 \mu(\alpha, s, \widetilde{X}_s) \rd s + \int_{t_k}^\alpha \partial_1 \sigma(\alpha, s, \widetilde{X}_s) \rd W_s \right) \rd \alpha \right| \\
&\quad + \hE \left| \int_{t_k}^s \mbox{trace} \big( \sigma (\alpha, \alpha, \widetilde{X}_\alpha)^{\top} \nabla^2 G (\widetilde{X}_\alpha) \sigma (\alpha, \alpha, \widetilde{X}_\alpha) \big) \rd \alpha \right| \\
&\leq C \| \nabla G \|_\infty \int_{t_k}^s [\varphi]_{Lip} + \| \nabla_x \mu \|_\infty (1 + \hE | \widetilde{X}_\alpha | ) \rd \alpha \\
&\quad + C \| \nabla G \|_\infty \int_{t_k}^s \int_{t_k}^\alpha (1 + \hE | \widetilde{X}_s | ) \rd s \rd \alpha \\
&\quad + C \| \nabla G \|_\infty \int_{t_k}^s \left( \int_{t_k}^\alpha (1 + \hE | \widetilde{X}_s |^2 ) \rd s \right)^\frac{1}{2} \rd \alpha \\ 
&\quad + C \| \nabla^2 G\|_\infty \int_{t_k}^s \| \nabla_x \sigma \|_\infty^2 (1 + \hE |\widetilde{X}_\alpha|^2 ) \rd \alpha \\
&\leq C (1+[\varphi]_{Lip}^2)(s-t_k).
\end{align*}
Then using the fact $\widetilde X_{t_{k}} = \widehat X_{t_{k}} = X_0 + \varphi_0$ indicates 
\begin{align} \label{eq.A1bound}
(\mathrm{I})_{1,1} = \left| \int_{t_k}^{t_{k+1}} \hE \left[ \int_{t_k}^s \rd G (\widetilde{X}_\alpha) \right] \rd s \right| \leq C(1+[\varphi]_{Lip}^2)h^2.
\end{align}

As for $(\mathrm{I})_{1,2}$, it follows from \eqref{eq.hatX_v} that 
\begin{align*}
\widehat X_{t_{k+1}}-\widehat X_{t_k} &= \varphi_h-\varphi_0+ \int_{t_k}^{t_{k+1}} \theta \mu(t_{k+1}, s, \widehat X_{t_{k+1}}) \rd s \\
&\quad\ + \int_{t_k}^{t_{k+1}} (1-\theta) \mu(t_{k+1}, s, \widehat X_{t_k}) \rd s + \int_{t_k}^{t_{k+1}} \sigma(t_{k+1}, {t_k}, \widehat X_{t_k}) \rd W_s, 
\end{align*}
which implies 
\begin{align*}
(\mathrm{I})_{1,2} 
&\le \int_{t_k}^{t_{k+1}} \left| \mathbb{E} \left\< \nabla g( \tau_h \varphi ), \nabla_x \mu(t_{k+1}+\cdot, s, \widehat X_{t_k}) (\varphi_h-\varphi_0) \right\> \right| \rd s \\
&\quad\ + \int_{t_k}^{t_{k+1}} \left| \mathbb{E} \left\< \nabla g( \tau_h \varphi ), \nabla_x \mu(t_{k+1}+\cdot, s, \widehat X_{t_k}) \int_{t_k}^{t_{k+1}} \theta \mu(t_{k+1}, v, \widehat X_{t_{k+1}}) \rd v \right\> \right| \rd s \\
&\quad\ + \int_{t_k}^{t_{k+1}} \left| \mathbb{E} \left\< \nabla g( \tau_h \varphi ), \nabla_x \mu(t_{k+1}+\cdot, s, \widehat X_{t_k}) \int_{t_k}^{t_{k+1}} (1-\theta) \mu(t_{k+1}, v, \widehat X_{t_k}) \rd v \right\> \right| \rd s \\
&\quad\ + \int_{t_k}^{t_{k+1}} \left| \hE \left\< \nabla g( \tau_h \varphi ), \nabla_x \mu(t_{k+1}+\cdot, s, \widehat X_{t_k}) \sigma(t_{k+1}, t_k, \widehat X_{{t_k}}) \big( W(t_{k+1}) - W(t_k) \big) \right\> \right| \rd s \\
&=: (\mathrm{I})_{1,2,1} + (\mathrm{I})_{1,2,2} + (\mathrm{I})_{1,2,3} + (\mathrm{I})_{1,2,4}. 
\end{align*}
Since $\nabla g$ and $\nabla_x \mu$ are bounded, and $\varphi$ is Lipschitz continuous, one deduces from \eqref{eq.prop2} that 
\begin{align*}
(\mathrm{I})_{1,2,1} + (\mathrm{I})_{1,2,2} + (\mathrm{I})_{1,2,3} \le C (1+[\varphi]_{Lip}) h^2.
\end{align*}
Note that $(\mathrm{I})_{1,2,4}=0$. Thus, one gets $(\mathrm{I})_{1,2} \leq C (1+[\varphi]_{Lip})h^2$.

For $(\mathrm{I})_{1,3}$, in view of the boundedness of $\nabla g$ and $\nabla_{xx} \mu$, as well as \eqref{eq.prop3}, 
\begin{align*}
(\mathrm{I})_{1,3} \leq C \int_{t_k}^{t_{k+1}} \hE [|\widehat X_{t_{k+1}}-\widehat X_{t_k}|^2] \rd s \leq C (1+[\varphi]_{Lip}^2) h^2.
\end{align*}
Combining the above estimates, one concludes $(\mathrm{I})_1 \leq C(1+[\varphi]_{Lip}^2 ) h^2$. Moreover, it follows from \eqref{eq.A1bound} that $(\mathrm{I})_2 = (1-\theta) (I)_{1,1} \leq C (1+[\varphi]_{Lip}^2) h^2$. Thus, recalling \eqref{eq.I1I2} yields 
\begin{align*}
\left| \hE \left[ \left\< \nabla g( \tau_h \varphi ), (P_{h,t_k}(\varphi)- \tau_h \varphi ) - (\bar P_{h,t_k}(\varphi)- \tau_h \varphi ) \right\> \right] \right| \leq C (1+[\varphi]_{Lip}^2 ) h^2.
\end{align*}

\textit{Proof of \eqref{eq.2order1} with $m = 2$}. 
Using \eqref{eq.def:P_rt}, \eqref{eq.barP_rt}, and the boundedness of $\nabla^2 g$ shows 
\begin{align} \label{eq.m=2}
\left | \hE \left\< \nabla^2 g( \tau_h \varphi ), (P_{h,t_k}(\varphi)- \tau_h \varphi )^{\otimes 2} - (\bar P_{h,t_k}(\varphi)- \tau_h \varphi )^{\otimes 2} \right\> \right | 
\leq \sum_{i=1}^4 (\mathrm{II})_{i} 
\end{align}
with
\begin{small}
\begin{align*}
& (\mathrm{II})_{1} := C \hE \bigg[ \left\| \int_{t_k}^{t_{k+1}} \mu( t_{k+1}+\cdot, s, \widetilde{X}_s) \rd s \right\|_{L^ \infty}^2 \\
&\qquad\quad + \left\| \int_{t_k}^{t_{k+1}} \theta \mu(t_{k+1}+\cdot, s, \widehat X_{t_{k+1}} ) + (1-\theta) \mu(t_{k+1}+\cdot, s, \widehat X_{t_k} ) \rd s \right\|_{L^ \infty}^2 \bigg], \\
& (\mathrm{II})_{2} := C \hE \left[ \left\| \int_{t_k}^{t_{k+1}} \mu( t_{k+1}+\cdot, s, \widetilde{X}_s) \rd s \right\|_{L^ \infty} \left\| \int_{t_k}^{t_{k+1}} \sigma( t_{k+1}+\cdot, s, \widetilde{X}_s) - \sigma( t_{k+1}+\cdot, t_k, \widetilde{X}_{t_k}) \rd W_s \right\|_{L^ \infty} \right], \\
& (\mathrm{II})_{3} := C \hE \left[ \left\| \int_{t_k}^{t_{k+1}} \mu( t_{k+1}+\cdot, s, \widetilde{X}_s) - \theta \mu(t_{k+1}+\cdot, s, \widehat X_{t_{k+1}} ) - (1-\theta) \mu(t_{k+1}+\cdot, s, \widehat X_{t_k}) \rd s \right\|_{L^ \infty} \right.\\
&\qquad\qquad\quad \times \left. \left\| \int_{t_k}^{t_{k+1}} \sigma( t_{k+1}+\cdot, t_k, \widehat{X}_{t_k}) \rd W_s \right\|_{L^ \infty} \right], \\
& (\mathrm{II})_{4} := \left| \hE \left\< \nabla^2 g( \tau_h \varphi ), \left( \int_{t_k}^{t_{k+1}} \sigma( t_{k+1}+\cdot, s, \widetilde{X}_s) \rd W_s\right)^{\otimes 2} - \left( \int_{t_k}^{t_{k+1}} 
 \sigma(t_{k+1}+\cdot, t_k, \widehat X_{t_k} ) \rd W_s\right)^{\otimes 2} \right\> \right|.
\end{align*}
\end{small}
Here and after, the notation $\| \varphi \|_{L^ \infty} := \sup_{u\geq0}|\varphi_u|$ is used for $\varphi \in \Phi_{2T}$.

By \eqref{eq.Appendix lem2_1} and \eqref{eq.prop2}, one has 
\begin{align} \label{eq.m=2_1}
(\mathrm{II})_1 \leq C(1+[\varphi]_{Lip}^2) h^2. 
\end{align} 
Applying Lemma \ref{Appendix lem3}, \eqref{eq.prop2}, and \eqref{eq.prop3}, one obtains
\begin{align*}
&\quad\ \hE \left\| \int_{t_k}^{t_{k+1}} \sigma( t_{k+1}+\cdot, s, \widetilde{X}_s) - \sigma( t_{k+1}+\cdot, t_k, \widetilde{X}_{t_k}) \rd W_s \right\|_{L^ \infty}^2 \\
&\leq C \hE \left\| \int_{t_k}^{t_{k+1}} \sigma( t_{k+1}+\cdot, s, \widetilde{X}_s) - \sigma( t_{k+1}+\cdot, t_k, \widetilde{X}_s) \rd W_s \right\|_{L^ \infty}^2 \\
&\quad + C \hE \left\| \int_{t_k}^{t_{k+1}} \sigma( t_{k+1}+\cdot, t_k, \widetilde{X}_s) - \sigma( t_{k+1}+\cdot, t_k, \widetilde{X}_{t_k}) \rd W_s \right\|_{L^ \infty}^2 \\
&\leq C(1+[\varphi]_{Lip}^2) h^3 + C(1+[\varphi]_{Lip}^4) h^2 \\
&\leq C(1+[\varphi]_{Lip}^4) h^2.
\end{align*}
Hence, by the Cauchy--Schwarz inequality and \eqref{eq.Appendix lem2_1}, 
\begin{align} \label{eq.m=2_2}
(\mathrm{II})_2 \leq C(1+[\varphi]_{Lip}^4) h^2. 
\end{align}
Using the relation $\widetilde X_{t_{k}} = \widehat X_{t_{k}} = X_0 + \varphi_0$ together with \eqref{eq.prop3}, one obtains that for $s \in [t_k, t_{k+1}]$,
\begin{align} \label{eq.strong order}
\hE |\widetilde{X}_{s}-\widehat{X}_{t_{k+1}}|^2 \leq 2 \hE |\widetilde{X}_{s}-\widetilde{X}_{t_{k}}|^2 + 2 \hE |\widehat{X}_{t_k}-\widehat{X}_{t_{k+1}}|^2 \leq C (1 + [\varphi]_{Lip}^2) h.
\end{align}
A similar analysis to that in \eqref{eq.Appendix lem2_1}, combined with the Cauchy--Schwarz inequality, \eqref{eq.Appendix lem2_2}, \eqref{eq.prop2}, \eqref{eq.prop3}, and \eqref{eq.strong order}, one has
\begin{align} \label{eq.m=2_3}
(\mathrm{II})_{3} \leq C(1+[\varphi]_{Lip}^2)h^2.
\end{align} 
For $(\mathrm{II})_{4}$, using the It\^o isometry implies 
\begin{align*}
(\mathrm{II})_{4} 
&= \left| \int_{t_k}^{t_{k+1}} \hE \< \nabla^2 g( \tau_h \varphi ), ( \sigma (t_{k+1}+\cdot, s, \widetilde X_s))^{\otimes 2} - ( \sigma(t_{k+1}+\cdot, t_k, \widehat X_{t_k}) )^{\otimes 2} \> \rd s \right| \\
&\leq \left| \int_{t_k}^{t_{k+1}} \hE \< \nabla^2 g( \tau_h \varphi ), ( \sigma(t_{k+1}+\cdot, s, \widetilde X_{s}))^{\otimes 2} - ( \sigma(t_{k+1}+\cdot, t_k, \widetilde X_{s}))^{\otimes 2} \> \rd s \right| \\
&\quad + \left| \int_{t_k}^{t_{k+1}} \hE \< \nabla^2 g( \tau_h \varphi ), ( \sigma(t_{k+1}+\cdot, t_k, \widetilde X_{s}) )^{\otimes 2} - ( \sigma (t_{k+1}+\cdot, t_k, \widehat X_{t_k}) )^{\otimes 2} \> \rd s \right| \\
&=: (\mathrm{II})_{4,1} +(\mathrm{II})_{4,2}.
\end{align*} 
For $(\mathrm{II})_{4,1}$, we define $G_1(r) := \< \nabla^2 g( \tau_h \varphi ), ( \sigma (t_{k+1}+\cdot, r, \widetilde X_s))^{\otimes 2} \>$ for $r \in [t_k, t_{k+1}]$. Then, it follows from \eqref{eq:asp1} and \eqref{eq.prop2} that
\begin{align*}
(\mathrm{II})_{4,1} = \left| \int_{t_k}^{t_{k+1}} \hE \left[ G_1(s) - G_1(t_k) \right] \rd s \right| \leq C(1+[\varphi]_{Lip}^2)h^2.
\end{align*}
Following the same step as in \eqref{eq.A1bound}, we obtain $(\mathrm{II})_{4,2} \leq C(1+[\varphi]_{Lip}^2 ) h^2$. Thus, 
\begin{align} \label{eq.m=2_4}
(\mathrm{II})_{4} \leq C(1+[\varphi]_{Lip}^2 ) h^2.
\end{align}
Combining the estimates \eqref{eq.m=2}, \eqref{eq.m=2_1}, \eqref{eq.m=2_2}, \eqref{eq.m=2_3}, and \eqref{eq.m=2_4}, we complete the proof of \eqref{eq.2order1} in the case $m=2$.

\textit{Proof of \eqref{eq.2order1} with $m = 3$}. Using \eqref{eq.def:P_rt} and \eqref{eq.barP_rt}, together with the assumption that $\nabla^3 g$ is bounded, one has 
\begin{align} \label{eq.m=3}
\left | \hE \left\< \nabla^3 g( \tau_h \varphi ), (P_{h,t_k}(\varphi) - \tau_h \varphi )^{\otimes 3} - (\bar P_{h,t_k}(\varphi)- \tau_h \varphi )^{\otimes 3} \right\> \right | 
\leq \sum_{i=1}^4 (\mathrm{III})_{i} 
\end{align}
with
\begin{small}
\begin{align*}
&(\mathrm{III})_{1} := C \hE \left\| \int_{t_k}^{t_{k+1}} \mu( t_{k+1}+\cdot, s, \widetilde{X}_s) \rd s \right\|_{L^ \infty}^3 \\
&\qquad\qquad + C \hE \left\| \int_{t_k}^{t_{k+1}} \theta \mu(t_{k+1}+\cdot, s, \widehat X_{t_{k+1}} ) + (1-\theta) \mu(t_{k+1}+\cdot, s, \widehat X_{t_k} ) \rd s \right\|_{L^ \infty}^3, \\
&(\mathrm{III})_{2} := C \hE \bigg[ \left\| \int_{t_k}^{t_{k+1}} \mu( t_{k+1}+\cdot, s, \widetilde{X}_s) \rd s \right\|_{L^ \infty}^2 \left\| \int_{t_k}^{t_{k+1}} \sigma( t_{k+1}+\cdot, s, \widetilde{X}_s) \rd W_s \right\|_{L^ \infty} \bigg] \\ 
&\qquad\qquad + C \hE \bigg[ \left\| \int_{t_k}^{t_{k+1}} \theta \mu(t_{k+1}+\cdot, s, \widehat X_{t_{k+1}} ) + (1-\theta) \mu(t_{k+1}+\cdot, s, \widehat X_{t_k} ) \rd s \right\|_{L^ \infty}^2 \\
&\qquad\qquad\quad \times \left\| \int_{t_k}^{t_{k+1}} \sigma(t_{k+1}+\cdot, t_k, \widehat X_{t_k} ) \rd W_s \right\|_{L^ \infty} \bigg], \\
&(\mathrm{III})_{3} := C \hE \bigg[ \left\| \int_{t_k}^{t_{k+1}} \mu( t_{k+1}+\cdot, s, \widetilde{X}_s) \rd s \right\|_{L^ \infty} \left\| \int_{t_k}^{t_{k+1}} \sigma( t_{k+1}+\cdot, s, \widetilde{X}_s) \rd W_s \right\|_{L^ \infty}^2 \bigg] \\ 
&\qquad\qquad + C \hE \bigg[ \left\| \int_{t_k}^{t_{k+1}} \theta \mu(t_{k+1}+\cdot, s, \widehat X_{t_{k+1}} ) + (1-\theta) \mu(t_{k+1}+\cdot, s, \widehat X_{t_k} ) \rd s \right\|_{L^ \infty} \\
&\qquad\qquad\quad \times \left\| \int_{t_k}^{t_{k+1}} \sigma(t_{k+1}+\cdot, t_k, \widehat X_{t_k} ) \rd W_s \right\|_{L^ \infty}^2 \bigg], \\
&(\mathrm{III})_{4} := \left| \hE \left\< \nabla^3 g( \tau_h \varphi ), \left( \int_{t_k}^{t_{k+1}} \sigma( t_{k+1}+\cdot, s, \widetilde{X}_s) \rd W_s\right)^{\otimes 3} - \left( \int_{t_k}^{t_{k+1}} 
 \sigma(t_{k+1}+\cdot, t_k, \widehat X_{t_k} ) \rd W_s\right)^{\otimes 3} \right\> \right|. 
\end{align*}
\end{small}
By the Cauchy--Schwarz inequality and Lemma \ref{Appendix lem2}, combined with \eqref{eq.prop2}, one has
\begin{align} \label{eq.m=3_1}
(\mathrm{III})_{1} + (\mathrm{III})_{2} + (\mathrm{III})_{3} \leq C(1+[\varphi]_{Lip}^3)h^2.
\end{align}

To facilitate the analysis of $(\mathrm{III})_{4}$, we define
\begin{align*}
\widetilde H(u) := \int_{t_k}^{t_{k+1}} \sigma (t_{k+1}+u, s, \widetilde{X}_{s}) \rd W_{s}, 
\qquad
\widehat H(u) := \int_{t_k}^{t_{k+1}} \sigma (t_{k+1}+u, t_k, \widehat{X}_{t_k}) \rd W_{s},
\qquad
u \geq 0.
\end{align*}
Then, using the boundedness of $\nabla^3 g$ and the Cauchy--Schwarz inequality indicates 
\begin{align*}
(\mathrm{III})_{4} &\leq | \hE \< \nabla^3 g(\varphi), \widetilde{H}^{\otimes 2} \otimes ( \widetilde{H} - \widehat{H} ) \> | + | \hE \< \nabla^3 g(\varphi), \widetilde{H} \otimes ( \widetilde{H} - \widehat{H} ) \otimes \widehat{H} \> | \\
&\quad + | \hE \< \nabla^3 g(\varphi), ( \widetilde{H} - \widehat{H} ) \otimes \widehat{H}^{\otimes 2} \> | \\
&\leq C \hE \big[ \| \widetilde{H} \|_{L^ \infty}^2 \| \widetilde{H}-\widehat{H} \|_{L^ \infty} \big] + C \hE \big[\| \widetilde{H} \|_{L^ \infty} \| \widetilde{H}-\widehat{H} \|_{L^ \infty} \| \widehat{H} \|_{L^ \infty} \big] \\
&\quad + C \hE \big[ \| \widetilde{H}-\widehat{H} \|_{L^ \infty} \| \widehat{H} \|_{L^ \infty}^2 \big] \\
&\leq C ( \hE \| \widetilde{H} \|_{L^ \infty}^4 )^{1/2} ( \hE \| \widetilde{H}-\widehat{H} \|_{L^ \infty}^2 )^{1/2} + C ( \hE \| \widetilde{H} \|_{L^ \infty}^4 )^{1/4} ( \hE \| \widetilde{H}-\widehat{H} \|_{L^ \infty}^2 )^{1/2} ( \hE \| \widehat{H} \|_{L^ \infty}^4 )^{1/4} \\
&\quad + C ( \hE \| \widetilde{H}-\widehat{H} \|_{L^ \infty}^2 )^{1/2} ( \hE \| \widehat{H} \|_{L^ \infty}^4 )^{1/2}.
\end{align*} 
Applying Lemma \ref{Appendix lem3} and \eqref{eq.Appendix lem2_2} together with \eqref{eq.prop2} and \eqref{eq.prop3}, one obtains 
\begin{align*}
\hE \| \widetilde{H} \|_{L^ \infty}^4 + \hE \| \widehat{H} \|_{L^ \infty}^4 + \hE \| \widetilde{H}-\widehat{H} \|_{L^ \infty}^2 \leq C (1+[\varphi]_{Lip}^4) h^2. 
\end{align*}
Hence, it holds that
\begin{align} \label{eq.m=3_2}
(\mathrm{III})_{4} \leq C(1+[\varphi]_{Lip}^4)h^2.
\end{align}
Combining the estimates \eqref{eq.m=3}, \eqref{eq.m=3_1}, and \eqref{eq.m=3_2}, we complete the proof of \eqref{eq.2order1} in the case $m=3$.

\textit{Proof of \eqref{eq.2order2}}. By \eqref{eq.def:P_rt} and the boundedness of $\nabla^4 g$, together with the Cauchy--Schwarz inequality, Lemma \ref{Appendix lem2}, and \eqref{eq.prop2}, one gets 
\begin{align*}
&\quad\, \left| \hE \left\<\nabla^4 g( \tau_h \varphi + \eta (P_{h,t_k}(\varphi)- \tau_h \varphi )), (P_{h,t_k}(\varphi)- \tau_h \varphi )^{\otimes 4} \right\> \right| \\
& \leq C \hE \left[ \left\| \int_{t_k}^{t_{k+1}} \mu( t_{k+1}+\cdot, s, \widetilde{X}_s) \rd s \right\|_{L^ \infty}^4 \right] \\
&\quad + C \hE \left[ \left\| \int_{t_k}^{t_{k+1}} \mu( t_{k+1}+\cdot, s, \widetilde{X}_s) \rd s \right\|_{L^ \infty}^3 \left\| \int_{t_k}^{t_{k+1}} \sigma( t_{k+1}+\cdot, s, \widetilde{X}_s) \rd W_s \right\|_{L^ \infty} \right] \\
&\quad + C \hE \left[ \left\| \int_{t_k}^{t_{k+1}} \mu( t_{k+1}+\cdot, s, \widetilde{X}_s) \rd s \right\|_{L^ \infty}^2 \left\| \int_{t_k}^{t_{k+1}} \sigma( t_{k+1}+\cdot, s, \widetilde{X}_s) \rd W_s \right\|_{L^ \infty}^2 \right] \\
&\quad + C \hE \left[ \left\| \int_{t_k}^{t_{k+1}} \mu( t_{k+1}+\cdot, s, \widetilde{X}_s) \rd s \right\|_{L^ \infty} \left\| \int_{t_k}^{t_{k+1}} \sigma( t_{k+1}+\cdot, s, \widetilde{X}_s) \rd W_s \right\|_{L^ \infty}^3 \right] \\
&\quad + C \hE \left[ \left\| \int_{t_k}^{t_{k+1}} \sigma( t_{k+1}+\cdot, s, \widetilde{X}_s) \rd W_s \right\|_{L^ \infty}^4 \right] \\
&\leq C(1+[\varphi]_{Lip}^4)h^2.
\end{align*}
In addition, \eqref{eq.2order3} can be obtained by a similar step as the proof of \eqref{eq.2order2}. Thus, the proof of this lemma is completed.
\end{proof}

\begin{theorem}
\label{thm.theta}
If Assumption \ref{assum.theta1} holds, then for any $f \in \cC_{b}^{4} (\mathbb{R}^d; \mathbb{R})$, the weak error of the stochastic theta method \eqref{eq:SthetaSVIE} can be controlled as 
\begin{align*}
\left| \mathbb{E}\, f(\bar{X}_T)-\mathbb{E}\, f(X_T) \right| 
\leq C h, 
\end{align*}
where $C>0$ is independent of $h$. 
\end{theorem}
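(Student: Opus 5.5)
The plan is to apply Theorem \ref{Thm.ImportantII} with $q=1$, so that $2q+2=4$ and $\ell=\nu=4$. Its hypotheses are Assumption \ref{assumption}, Assumption \ref{assum.barP is C^1} and Assumption \ref{assum.importantII}, and I would check them in that order. With $q=1$, Theorem \ref{Thm.ImportantII} then gives $|\hE f(\bar X_T)-\hE f(X_T)|\le Ch$ for every $f\in\cC_b^4(\hR^d;\hR)$, which is the claim.

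For Assumption \ref{assumption} with $\ell=4$, note that Assumption \ref{assum.theta1} requires $\mu\in\cC_b^{1,0,4}$ and $\sigma\in\cC_b^{1,1,4}$. These classes are contained in $\cC_b^{0,0,4}$, so the regularity assumption holds with $\ell=4$. The zero extension of $\mu,\sigma$ outside $[0,2T]^2$ is the one used throughout Section \ref{sec.FWCT}, so nothing further is needed there.

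For Assumption \ref{assum.barP is C^1} with $\nu=4$, I would invoke Lemma \ref{lem.theta2} directly. It gives $\bar P_{t_k,0}(\widetilde{\mathbf{0}})\in\cC^1([0,\infty);\hR^d)$ and $\hE[\bar P_{t_k,0}(\widetilde{\mathbf{0}})]_{Lip}^4\le C$ uniformly in $k$ and $h$. For Assumption \ref{assum.importantII} with $q=1$, the index $m$ runs over $\{1,2,3\}$ and the required rate is $h^{q+1}=h^2$ with weight $1+[\varphi]_{Lip}^4$. This is exactly the content of \eqref{eq.2order1}, \eqref{eq.2order2} and \eqref{eq.2order3} in Lemma \ref{lem.theta1}, with a constant depending on $g$ only through $\|g\|_{\cC_b^4(\Phi_{2T})}$.

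The one point to double-check is that the numerical operator $\bar P_{h,t_j}$ in \eqref{eq.barP_rt} really reproduces the scheme \eqref{eq:SthetaSVIE} through \eqref{eq:Xtk}. This has already been verified in the derivation of \eqref{eq.barP_tk_0}, using the unique solvability of \eqref{eq.hatX_v} under $h\theta\|\nabla_x\mu\|_\infty<1$. I would also confirm that the integrability conditions behind Proposition \ref{Prop.ImportantI} are met. The bound from Assumption \ref{assum.importantI} is evaluated at the random path $\bar P_{t_k,0}(\widetilde{\mathbf{0}})$, which is measurable with respect to $\cF_{t_k}$. It is therefore independent of the increments of $W$ on $[t_k,t_{k+1}]$, so conditioning is legitimate, and the resulting factor $\hE[\cdot]_{Lip}^4$ is finite by Lemma \ref{lem.theta2}. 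All the analytic work sits in Lemmas \ref{lem.theta2} and \ref{lem.theta1}, so no real obstacle remains at this stage.
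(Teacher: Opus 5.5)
Your proposal is correct and matches the paper's proof. The paper also applies Theorem~\ref{Thm.ImportantII} with $q=1$ (so $\ell=\nu=4$), taking Assumption~\ref{assum.barP is C^1} from Lemma~\ref{lem.theta2} and Assumption~\ref{assum.importantII} from Lemma~\ref{lem.theta1}; your extra checks (Assumption~\ref{assum.theta1} implies Assumption~\ref{assumption} with $\ell=4$, and the $\cF_{t_k}$-measurability used in the telescoping step) are bookkeeping the paper leaves implicit.
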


\begin{proof}
According to the fundamental weak convergence theorem (i.e., Theorem \ref{Thm.ImportantII}), the proof is completed by Lemmas \ref{lem.theta2} and \ref{lem.theta1}. 
\end{proof}

\begin{remark} \label{remark.compare}
In the case of $\theta = 0$, the stochastic theta method \eqref{eq:SthetaSVIE} coincides, up to a minor modification, with the Euler-type scheme from \cite{BrasFukasawa2025}, making Theorem \ref{thm.theta} a generalization that recovers their main findings. To handle the one-step weak error, \cite{BrasFukasawa2025} relied on a generalized It\^o-type formula.
Due to the regularity conditions required by this formula, their framework necessitates a bounded diffusion coefficient $\sigma$ and $\cC^5$-continuity of the coefficients $\mu$ and $\sigma$. Notably, our approach circumvents this generalized It\^o formula, eliminating the boundedness assumption on $\sigma$ and relaxing the fifth-order differentiability requirement of $\mu$ and $\sigma$ on the state variable.
\end{remark}

\section{Wong--Zakai approximation}
\label{sec.WZA}
Wong--Zakai approximations are widely utilized to regularize noise and approximate stochastic systems. While recent works \cite{Kamrani2024, XuZhang2024} have investigated the strong error of such approximations for SVIEs, a corresponding weak error analysis is still lacking. In this section, we employ the fundamental weak convergence theorem (Theorem \ref{Thm.ImportantII}) to analyze the weak convergence rate of the Wong--Zakai approximation for the SVIE \eqref{eq.SVIE}.

We consider the linear interpolation
\begin{align*}
W_t^N = W_{t_j} + \frac{t - t_j}{h} \Delta W(t_j), \qquad t \in [t_j, t_{j+1}], \quad j \in \{0, 1, \cdots, N-1 \},
\end{align*}
where $\Delta W(t_j) = W_{t_{j+1}} - W_{t_j}$, and 
\begin{align} \label{eq.dWN}
\rd W_t^N = \frac{1}{h} \Delta W(t_j) \rd t.
\end{align} 
Then the Wong--Zakai approximation $\{\bar{\bar{X}}_t^N\}_{0 \leq t \leq T}$ for SVIE \eqref{eq.SVIE} is defined as
\begin{align} \label{eq.def.WZ}
\bar{\bar{X}}_t = X_0 + \int_0^t \mu(t, s, \bar{\bar{X}}_s) \rd s + \int_0^t \sigma(t, s, \bar{\bar{X}}_s) \rd W_s^N, \qquad t \in [0, T],
\end{align}
where the second integral is understood in the Lebesgue--Stieltjes sense.

Next, we show that the Wong--Zakai approximation \eqref{eq.def.WZ} can be recast within the framework developed in Subsection \ref{sub:FWCT}. For $j \in \{0,1,\cdots,N-1\}$, we apply the Wong--Zakai approximation to \eqref{eq.def:P_rt} with $r =h$ and $t = t_j$ and define the resulting operator $\vec{P}_{h,t_j}: \Phi_{2T} \to \Phi_{2T}$ by $\varphi\mapsto \vec{P}_{h,t_j}(\varphi)$ with 
\begin{small}
\begin{align} \label{eq.vecP_ht}
\vec{P}_{h, t_j}(\varphi)_u 
= \varphi_{h+u}+ \int_{t_j}^{t_{j+1}} \mu(t_{j+1}+u, s, \vec{X}_s^{t_j,\varphi} ) \rd s+ \int_{t_j}^{t_{j+1}} \sigma(t_{j+1}+u, s, \vec{X}_s^{t_j,\varphi} ) \rd W_s^N, \quad u \geq 0. 
\end{align}
\end{small}
Here, $\{ \vec X_{v}^{t_j,\varphi} \}_{v \in [t_j,t_{j+1}]}$ is the solution to 
\begin{align} 
\vec X_{v}^{t_j,\varphi} 
&= X_0 + \varphi_{v-t_j} + \int_{t_j}^{v} \mu(v, s, \vec X_{s}^{t_j,\varphi}) \rd s+ \int_{t_j}^{v} \sigma(v,s, \vec X_{s}^{t_j,\varphi}) \rd W_s^N. \label{eq.def:vecX}
\end{align} 
Let $k \in \{1,2,\cdots,N\}$. According to \eqref{eq.semigroup:tildeX} and \eqref{eq.vecP_ht}, one can obtain 
\begin{align*}
\vec{Y}_{t_k}(u) 
&= \vec{P}_{h,t_{k-1}}(\vec{Y}_{t_{k-1}})_{u} \\
&= \vec{Y}_{t_{k-1}}(h+u) + \int_{t_{k-1}}^{t_{k}} \mu(t_{k}+u, s, \vec X_{s}^{t_{k-1},\vec{Y}_{t_{k-1}}}) \rd s + \int_{t_{k-1}}^{t_{k}} \sigma(t_{k}+u, s, \vec X_{s}^{t_{k-1},\vec{Y}_{t_{k-1}}} ) \rd W_s^N \\
&= \vec{Y}_{t_{k-1}}(h+u) + \int_{t_{k-1}}^{t_{k}} \mu(t_{k}+u, s, \bar{\bar{X}}_{s}) \rd s + \int_{t_{k-1}}^{t_{k}} \sigma(t_{k}+u, s, \bar{\bar{X}}_{s} ) \rd W_s^N,
\end{align*}
where the fact that $\vec{X}_{s}^{t_{k-1},\vec{Y}_{t_{k-1}}} = \bar{\bar{X}}_{s}$ is also used in the last step. Similarly, 
\begin{align*}
\vec{Y}_{t_{k-1}}(h+u) = \vec{Y}_{t_{k-2}}(2h+u) + \int_{t_{k-2}}^{t_{k-1}} \mu(t_{k}+u, s, \bar{\bar{X}}_{s}) \rd s + \int_{t_{k-2}}^{t_{k-1}} \sigma(t_{k}+u, s, \bar{\bar{X}}_{s} ) \rd W_s^N.
\end{align*} 
Then, by iterating the above recurrence relation and using the fact that $\vec{Y}_0 = \widetilde{\mathbf{0}}$, one arrives at 
\begin{align} \label{eq.vecP_tk_0}
\vec{Y}_{t_k}(u) = \int_0^{t_k} \mu(t_k+u, s, \bar{\bar{X}}_s) \rd s + \int_0^{t_k} 
 \sigma(t_k+u, s, \bar{\bar{X}}_s ) \rd W_s^N, \qquad u \geq 0. 
\end{align}
Finally, the approximation derived from \eqref{eq:Xtk} coincides with the Wong--Zakai approximation \eqref{eq.def.WZ}.

\begin{assumption} \label{assum.WZ}
There is a constant $C := C(T) > 0$ such that for any $x \in \hR^d$ and any $t, s \in [0,T]$, 
\begin{align}
& \sigma(t, t, x)=0 \in \hR^{d \times m}, \label{eq:WZKasp1} \\
& |\partial_1 \mu (t,s,x)| \leq C(1+|x|), \quad |\partial_1 \sigma (t,s,x)| \leq C. \label{eq.WZasp3}
\end{align}
\end{assumption}

For example, Assumption \ref{assum.WZ} is satisfied in the scalar case when $\mu(t,s,x) = (t-s)x$ and $\sigma(t,s,x) = (t-s)\cos(x)$. Under Assumption \ref{assum.WZ}, the following proposition can be established using standard arguments in \cite{XuZhang2024} together with the H\"older and Gr\"onwall inequalities, and its proof is omitted.

\begin{proposition} 
Let $p \geq 2$ and $j \in \{0,1,\dots,N-1\}$. Let $X$ and $\bar{\bar{X}}$ denote the solutions of \eqref{eq.SVIE} and \eqref{eq.def.WZ}, respectively. For $t \in [0,T]$ and $\varphi \in \Phi_{2T} \cap \mathcal{C}^1([0,\infty); \mathbb{R}^d)$, denote by $\widetilde{X}^{t_j,\varphi}$ and $\vec{X}^{t_j,\varphi}$ the processes defined in \eqref{eq.def:tildeX} and \eqref{eq.def:vecX}, respectively. Then under Assumptions \ref{assum.theta1} and \ref{assum.WZ}, there exists a constant $C := C(T) > 0$, independent of $\varphi$ and $h$, such that for all $t \in [0,T]$ and $ t_j \leq v \leq t_{j+1}$,
\begin{align}
& \hE|X_t|^p + \hE|\bar{\bar{X}}_t|^p \leq C, \label{eq.WZprop1} \\
& \hE|\widetilde{X}_v^{t_j,\varphi}|^p + \hE|\vec{X}_v^{t_j,\varphi}|^p \leq C (1+ [\varphi]_{Lip}^p). \label{eq.WZprop2} 
\end{align}
\end{proposition}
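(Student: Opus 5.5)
The plan is to treat the four estimates one at a time. The bounds for $X$ and $\widetilde X^{t_j,\varphi}$ are classical SVIE moment bounds; the new ingredient is the Wong--Zakai process, where $\rd W^N_s = h^{-1}\Delta W(t_j)\,\rd s$ produces a factor $h^{-1/2}$ per step that must be absorbed. The key device is the condition $\sigma(t,t,x)=0$ from \eqref{eq:WZKasp1}. Combined with \eqref{eq.WZasp3}, it gives
\[
|\sigma(t,s,x)|=|\sigma(t,s,x)-\sigma(s,s,x)|\le C|t-s|.
\]
So on a step of length $h$ the kernel $\sigma(v,s,\cdot)$ with $s,v\in[t_j,t_{j+1}]$ is $O(h)$. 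This more than compensates for $|\Delta W(t_j)|/h\sim h^{-1/2}$.

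\textbf{Step 1 (exact solutions).} For $\widetilde X^{t_j,\varphi}_v$ I use \eqref{eq.def:tildeX}, the bound $|\varphi_{v-t_j}|\le |\varphi_0|+[\varphi]_{Lip}(v-t_j)$, and linear growth of $\mu,\sigma$ (bounded $\nabla_x$ and continuity on the compact time domain). Then H\"older, BDG and Gr\"onwall give $\hE|\widetilde X_v|^p\le C(1+|X_0|^p+|\varphi_0|^p+[\varphi]_{Lip}^p)$. Here $\varphi_0$ is the delicate point: a $\varphi\in\Phi_{2T}$ vanishes beyond $2T$, so $|\varphi_0|\le 2T[\varphi]_{Lip}$ and the bound becomes $C(1+[\varphi]^p_{Lip})$. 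The case $\varphi=\widetilde{\mathbf 0}$, $t_j=0$ gives \eqref{eq.WZprop1} for $X$.

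\textbf{Step 2 (Wong--Zakai on one step).} For $\vec X^{t_j,\varphi}$ with $v\in[t_j,t_{j+1}]$, I write the stochastic term as
\[
\int_{t_j}^v \sigma(v,s,\vec X_s)\,h^{-1}\Delta W(t_j)\,\rd s,
\]
bounded by $C h^{-1}|\Delta W(t_j)|\int_{t_j}^v (v-s)\,\rd s\le C h|\Delta W(t_j)|$. This is deterministic-size $O(h^{3/2})$ in $L^p$ and needs no Gr\"onwall in $\sigma$. The drift term has linear growth and is handled by Gr\"onwall, since $\mu$ is Lipschitz in $x$. Existence and uniqueness hold pathwise as a Lebesgue-integral Volterra equation with Lipschitz coefficient: the $\sigma$ term is pathwise Lipschitz with constant $\|\nabla_x\sigma\|_\infty|\Delta W|/h$ times $h$, and a pathwise Gr\"onwall handles it. That Gr\"onwall constant contains $\exp(C|\Delta W|)$, so I avoid it by bounding $\sigma$ itself through the $O(|t-s|)$ vanishing rather than through Lipschitzness. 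Then $\hE|\vec X_v|^p\le C(1+[\varphi]^p_{Lip})$.

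\textbf{Step 3 (global Wong--Zakai bound, main obstacle).} For $\bar{\bar X}_t$ on all of $[0,T]$, the stochastic integral $\int_0^t\sigma(t,s,\bar{\bar X}_s)\,\rd W^N_s$ is not small. I would split it as
\[
\sum_j \sigma(t,t_j,\bar{\bar X}_{t_j})\,\Delta W(t_j)+\sum_j h^{-1}\Delta W(t_j)\int_{t_j}^{t_{j+1}}\bigl(\sigma(t,s,\bar{\bar X}_s)-\sigma(t,t_j,\bar{\bar X}_{t_j})\bigr)\,\rd s.
\]
The first sum is a discrete martingale-transform in $j$ (with $t$ fixed), handled by discrete BDG and $\hE|\bar{\bar X}_{t_j}|^p$. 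In the correction, $|\sigma(t,s,x)-\sigma(t,t_j,x)|\le C h$ holds by \eqref{eq:asp1}. The state-difference part $\nabla_x\sigma\cdot(\bar{\bar X}_s-\bar{\bar X}_{t_j})$ contains the leading piece $\sigma(s,\cdot)\Delta W$-type increments times $\Delta W/h$, giving a correction of order $|\Delta W|^2/h\cdot$(kernel). Summed over $j$, this is $O(1)$ in $L^p$ and bounded in terms of $\sup_{r\le t}\hE|\bar{\bar X}_r|^p$, which closes a Gr\"onwall-type inequality. Controlling these Itô--Stratonovich-type correction terms uniformly in $h$, in $L^p$ rather than pathwise, is the main difficulty. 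I would follow the argument of \cite{XuZhang2024}, using the one-step estimate of Step 2 for the local increments $\bar{\bar X}_s-\bar{\bar X}_{t_j}$.
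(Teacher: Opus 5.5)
Your Steps 1 and 2 are correct. They are the ``standard H\"older/BDG/Gr\"onwall'' argument the paper has in mind; the paper omits the proof and points to \cite{XuZhang2024}. You also make explicit two facts the paper uses silently: $\|\varphi\|_\infty\le 2T[\varphi]_{Lip}$ for $\varphi\in\Phi_{2T}$, and $|\sigma(t,s,x)|\le C|t-s|$ from \eqref{eq:WZKasp1}--\eqref{eq.WZasp3}. The latter makes the one-step Wong--Zakai term pathwise $O(h|\Delta W(t_j)|)$. Step 3, the only nontrivial part, is deferred to \cite{XuZhang2024} just as in the paper. However, your sketch of it is not a proof, and it puts the difficulty in the wrong place.

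Because of \eqref{eq:WZKasp1}, the local piece of $\bar{\bar X}_s-\bar{\bar X}_{t_j}$, $s\in[t_j,t_{j+1}]$, produces no It\^o--Stratonovich correction. Your own Step 2 bound gives $|\int_{t_j}^s\sigma(s,r,\bar{\bar X}_r)\,\rd W^N_r|\le Ch|\Delta W(t_j)|$. So this piece contributes $Ch\sum_j|\Delta W(t_j)|^2=O(h)$ to the correction sum, not $O(1)$.

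The term you never address is the history part
\[
\int_0^{t_j}\bigl(\sigma(s,r,\bar{\bar X}_r)-\sigma(t_j,r,\bar{\bar X}_r)\bigr)\,\rd W^N_r ,
\]
which is the $\sigma$-part of $\vec Y_{t_j}(s-t_j)-\vec Y_{t_j}(0)$. Step 2 does not control it: applying Step 2 with $\varphi=\vec Y_{t_j}$ requires moments of $[\vec Y_{t_j}]_{Lip}$, which is itself a Wong--Zakai integral over $[0,t_j]$.

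The missing idea is that a crude pathwise bound suffices here.
\begin{itemize}
\item By \eqref{eq.WZasp3}, $|\sigma(s,r,x)-\sigma(t_j,r,x)|\le Ch$ uniformly in $x$, so the history term is at most $Ch\sum_{i<j}|\Delta W(t_i)|$.
\item Its contribution to the correction is then at most $Ch\bigl(\sum_i|\Delta W(t_i)|\bigr)^2$. This is $O(1)$ in $L^p$, because $\|\sum_i|\Delta W(t_i)|\|_{L^{2p}}\le Ch^{-1/2}$.
\item The remaining contributions are linear in $\bar{\bar X}$. They come from the drift and from \eqref{eq:asp1}, which gives $Ch(1+|x|)$, not $Ch$. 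Absorb $\sup_{[t_j,t_{j+1}]}|\bar{\bar X}|$ for small $h$, and use that $\Delta W(t_j)$ is independent of $\cF_{t_j}$. (Note that $\bar{\bar X}_{t_j}$ is $\cF_{t_j}$-measurable, although $\bar{\bar X}$ is not adapted inside a step.) These contributions are then at most $C+Ch^{1/2}\sup_{r\le t}\|\bar{\bar X}_r\|_{L^p}$.
\item Your martingale sum is bounded outright, since $|\sigma|\le CT$.
\item For fixed $h$ the moments are a priori finite, because pathwise $|\int_0^t\sigma(t,s,\bar{\bar X}_s)\,\rd W^N_s|\le CT\sum_j|\Delta W(t_j)|$.
\end{itemize}
Gr\"onwall then yields the $h$-uniform bound in \eqref{eq.WZprop1}.
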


According to Theorem \ref{Thm.ImportantII}, the first-order weak convergence of the Wong--Zakai approximation \eqref{eq.def.WZ} for SVIE \eqref{eq.SVIE} can be obtained by verifying Assumptions \ref{assum.barP is C^1} and \ref{assum.importantII} with $\nu = 4$ and $q = 1$, as shown in Lemmas \ref{lem.WZ2} and \ref{lem.WZ1}, respectively.

\begin{lemma} \label{lem.WZ2}
Let $k \in \{0,1,\cdots,N-1\}$. Under Assumptions \ref{assum.theta1} and \ref{assum.WZ}, there is a constant $C > 0$ independent of $k$ and $h$ such that $\vec P_{t_k, 0}(\widetilde{\mathbf{0}}) \in \cC^1 ([0, \infty); \hR^d)$ and $\mathbb{E} \left( \big[ \vec P_{t_k, 0} (\widetilde{\mathbf{0}}) \big]_{Lip}^4 \right) \leq C$. 
\end{lemma}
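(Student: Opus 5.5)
We follow the proof of Lemma \ref{lem.theta2}. By \eqref{eq.vecP_tk_0} and $\vec P_{t_k,0}(\widetilde{\mathbf{0}})=\vec Y_{t_k}$, for every $u\ge 0$ we have
\begin{align*}
\vec P_{t_k,0}(\widetilde{\mathbf{0}})_u=\int_0^{t_k}\mu(t_k+u,s,\bar{\bar{X}}_s)\,\rd s+\int_0^{t_k}\sigma(t_k+u,s,\bar{\bar{X}}_s)\,\rd W^N_s .
\end{align*}
By \eqref{eq.dWN}, the second integral is a pathwise Lebesgue integral $\int_0^{t_k}\sigma(t_k+u,s,\bar{\bar{X}}_s)\,h^{-1}\Delta W(\underline{s})\,\rd s$. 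We write $\mu(t_k+u,\cdot)=\mu(t_k,\cdot)+\int_0^u\partial_1\mu(t_k+v,\cdot)\,\rd v$, and similarly for $\sigma$. Fubini's theorem applies pathwise, since all integrands are continuous in $v$ and bounded on compacts for fixed $\omega$. Hence $\vec P_{t_k,0}(\widetilde{\mathbf{0}})\in\cC^1([0,\infty);\hR^d)$ with
\begin{align*}
\frac{\rd}{\rd u}\vec P_{t_k,0}(\widetilde{\mathbf{0}})_u=\int_0^{t_k}\partial_1\mu(t_k+u,s,\bar{\bar{X}}_s)\,\rd s+\int_0^{t_k}\partial_1\sigma(t_k+u,s,\bar{\bar{X}}_s)\,\rd W^N_s .
\end{align*}
Consequently $[\vec P_{t_k,0}(\widetilde{\mathbf{0}})]_{Lip}\le\sup_{u\ge0}|\tfrac{\rd}{\rd u}\vec P_{t_k,0}(\widetilde{\mathbf{0}})_u|$. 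The extension by zero outside $[0,2T]^2$ could spoil differentiability at $t_k+u=2T$. We handle this as in Lemma \ref{lem.theta2}, taking the extension to be compatible (or noting the Lipschitz constant is still controlled by the one-sided derivative bounds).

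For the drift part, \eqref{eq.WZasp3} and \eqref{eq.WZprop1} give $\hE\sup_u|\int_0^{t_k}\partial_1\mu\,\rd s|^4\le C\int_0^T(1+\hE|\bar{\bar{X}}_s|^4)\,\rd s\le C$.

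The main obstacle is the $\rd W^N$ term. Crude bounds give $|\rd W^N_s|\sim h^{-1/2}$, so the integral is of size $\sum_j|\Delta W(t_j)|\sim h^{-1/2}$, which blows up. This is why \eqref{eq.WZasp3} requires $\partial_1\sigma$ to be bounded and not merely of linear growth. We plan to compare with an It\^o integral. We split $\partial_1\sigma(t_k+u,s,\bar{\bar{X}}_s)=\partial_1\sigma(t_k+u,s,\bar{\bar{X}}_{\underline{s}})+[\partial_1\sigma(t_k+u,s,\bar{\bar{X}}_s)-\partial_1\sigma(t_k+u,s,\bar{\bar{X}}_{\underline{s}})]$. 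The first part, integrated against $\rd W^N$, equals $\sum_j\big(h^{-1}\int_{t_j}^{t_{j+1}}\partial_1\sigma(t_k+u,s,\bar{\bar{X}}_{t_j})\,\rd s\big)\Delta W(t_j)$. This is a martingale transform with $\cF_{t_j}$-measurable bounded coefficients. To control $\sup_u$, we invoke Lemma \ref{Appendix lem3} or a Kolmogorov/Garsia--Rodemich--Rumsey argument in $u$, using the Lipschitz dependence in $t$ from \eqref{eq:asp3}, and obtain a uniform fourth-moment bound.

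For the second part we use $|\bar{\bar{X}}_s-\bar{\bar{X}}_{\underline{s}}|$. On $[t_j,s]$, $\bar{\bar{X}}$ evolves by the Volterra equation \eqref{eq.def.WZ} with increments of order $(s-t_j)(1+|\Delta W(t_j)|/h)$. Here the condition \eqref{eq:WZKasp1}, $\sigma(t,t,x)=0$, is the key ingredient. It gives $\sigma(t,s,x)=O(t-s)$, so the $\rd W^N$ contribution to this increment is only $O(h^{-1/2})$ in total variation, and the diagonal self-interaction that would produce an $h^{-1}$-size local term disappears. The net effect is a difference of order $h^{1/2}$ in $L^p$. The product with $|\Delta W(t_j)|/h$, summed over $N$ intervals of length $h$, is then $O(1)$ in $L^4$, by H\"older's inequality together with \eqref{eq.WZprop1}. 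If the direct linearization proves too delicate, we would instead expand $\bar{\bar{X}}_s-\bar{\bar{X}}_{t_j}$ explicitly through \eqref{eq.def.WZ}. In that expansion, the part linear in $\Delta W(t_j)$ yields $|\Delta W(t_j)|^2/h$ terms, each of size $O(h)$ in expectation, while the remaining terms are of higher order. Combining the drift and diffusion bounds gives $\hE([\vec P_{t_k,0}(\widetilde{\mathbf{0}})]_{Lip}^4)\le C$ uniformly in $k$ and $h$.
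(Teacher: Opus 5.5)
Your route differs from the paper's, and on the main point you are right where the paper is not. The paper bounds the $\rd W^N$ part of the derivative using only $|\partial_1\sigma|\le C$, H\"older's inequality and \eqref{eq.dWN}. Those tools give only $\bigl|\int_0^{t_k}\partial_1\sigma(t_k+u,s,\bar{\bar X}_s)\,\rd W^N_s\bigr|\le C\sum_{j<k}|\Delta W(t_j)|$, whose fourth moment is of order $h^{-2}$. You split this term into the martingale transform $M(u)=\sum_{j<k}a_j(u)\Delta W(t_j)$, with $a_j(u)=h^{-1}\int_{t_j}^{t_{j+1}}\partial_1\sigma(t_k+u,s,\bar{\bar X}_{t_j})\,\rd s$, plus a remainder $R(u)$ controlled by $\|\bar{\bar X}_s-\bar{\bar X}_{\underline{s}}\|_{L^p}\le Ch^{1/2}$. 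That is the right way to capture the cancellation, and for each fixed $u$ BDG gives $\hE|M(u)|^4\le C$.

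The gap is the supremum over $u$, which is exactly what $[\cdot]_{Lip}$ requires. You invoke ``the Lipschitz dependence in $t$ from \eqref{eq:asp3}'', but \eqref{eq:asp3} concerns only $\partial_2\sigma$ and $\nabla_x\sigma$. Nothing in Assumptions \ref{assum.theta1} and \ref{assum.WZ} gives $\partial_1\sigma$ any quantitative modulus of continuity in its first variable beyond plain continuity. So Lemma \ref{lem.boundedness of moments}, applied to $M(u)=\int_0^{t_k}a_{\lfloor s/h\rfloor}(u)\,\rd W_s$, has no input. Kolmogorov- or GRR-type arguments need a quantitative modulus in $u$ that continuity alone does not provide. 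Lemma \ref{lem.theta2} does not help, since the paper tacitly needs the same fact there when it applies BDG under $\sup_u$. As it stands you need either an additional hypothesis, e.g.\ $|\partial_1\sigma(t_1,s,x)-\partial_1\sigma(t_2,s,x)|\le C|t_1-t_2|(1+|x|)$, or a different argument that avoids regularity of $\partial_1\sigma$ in $t$. A H\"older version of that hypothesis would also do, combined with Kolmogorov for large $p$.

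Two smaller points.
\begin{itemize}
\item The remainder needs $\partial_1\sigma$ to be Lipschitz in $x$ uniformly in $(t,s)$, and you do not say where this comes from. It does follow from \eqref{eq:asp3}: dividing $[\sigma(t+\varepsilon,s,x)-\sigma(t+\varepsilon,s,y)]-[\sigma(t,s,x)-\sigma(t,s,y)]$ by $\varepsilon$ and letting $\varepsilon\to 0$ gives $|\partial_1\sigma(t,s,x)-\partial_1\sigma(t,s,y)|\le C(1+|x|+|y|)|x-y|$. H\"older then gives $\hE\sup_u|R(u)|^4\le C$.
\item In the increment estimate you dropped the memory term $\int_0^{t_j}[\sigma(s,r,\bar{\bar X}_r)-\sigma(t_j,r,\bar{\bar X}_r)]\,\rd W^N_r$. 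By \eqref{eq:asp2} and the crude bound it is $(s-t_j)\,O(h^{-1/2})=O(h^{1/2})$ in $L^p$, so your conclusion survives. The local term $\int_{t_j}^{s}\sigma(s,r,\bar{\bar X}_r)\,\rd W^N_r$ is already $O(h^{1/2})$ by linear growth alone. There is therefore no $h^{-1}$-size term, and \eqref{eq:WZKasp1} is not needed for this step.
\end{itemize}
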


\begin{proof}
In view of the relation $\vec{P}_{t_k,0}(\widetilde{\mathbf{0}}) = \vec{Y}_{t_k}$ and \eqref{eq.vecP_tk_0}, we have that for any $u \geq 0$, 
\begin{align*}
\vec P_{t_k, 0}(\widetilde{\mathbf{0}})_u = \int_0^{t_k} \mu(t_k+u, s, \bar{\bar{X}}_s) \rd s + \int_0^{t_k} 
 \sigma(t_k+u, s, \bar{\bar{X}}_s ) \rd W_s^N.
\end{align*}
Then, it follows from Assumption \ref{assum.theta1} and Fubini's theorem that 
\begin{align*}
\vec P_{t_k, 0}(\widetilde{\mathbf{0}})_u
&= \int_0^{t_k} \int_0^u \partial_1 \mu(t_k+v, s, \bar{\bar{X}}_s) \rd v \rd s + \int_0^{t_k} \mu(t_k, s, \bar{\bar{X}}_s) \rd s \\
&\quad + \int_0^{t_k} \int_0^u \partial_1 \sigma(t_k+v, s, \bar{\bar{X}}_s) \rd v \rd W_s^N + \int_0^{t_k} \sigma(t_k, s, \bar{\bar{X}}_s) \rd W_s^N \\
&= \int_0^u \int_0^{t_k} \partial_1 \mu(t_k+v, s, \bar{\bar{X}}_s) \rd s \rd v + \int_0^{t_k} \mu(t_k, s, \bar{\bar{X}}_s) \rd s \\
&\quad + \int_0^u \int_0^{t_k} \partial_1 \sigma(t_k+v, s, \bar{\bar{X}}_s) \rd W_s^N \rd v + \int_0^{t_k} \sigma(t_k, s, \bar{\bar{X}}_s) \rd W_s^N, 
\end{align*}
which implies that $\vec P_{t_k, 0}(\widetilde{\mathbf{0}}) \in \cC^1 ([0, \infty); \hR^d)$ and 
\begin{align*}
\frac{\rd}{\rd u} \vec P_{t_k, 0}(\widetilde{\mathbf{0}})_u
= \int_0^{t_k} \partial_1 \mu(t_k+u, s, \bar{\bar{X}}_s) \rd s + \int_0^{t_k} \partial_1 \sigma(t_k+u, s, \bar{\bar{X}}_s) \rd W_s^N.
\end{align*}
Hence, using Assumption \ref{assum.WZ}, H\"older's inequality, \eqref{eq.dWN}, and \eqref{eq.WZprop1} yields 
\begin{align*}
\mathbb{E} \left( \big[ \vec P_{t_k, 0} (\widetilde{\mathbf{0}}) \big]_{Lip}^4 \right)
= \hE \bigg[ \Big( \sup_{u \geq 0} \Big| \frac{\rd}{\rd u}\vec P_{t_k, 0}(\widetilde{\mathbf{0}})_u \Big| \Big)^4 \bigg] 
\leq C.
\end{align*}
The proof is completed. 
\end{proof}

\begin{lemma} \label{lem.WZ1}
Let $g \in \cC_b^{4} ( \Phi_{2T}; \hR)$. If Assumptions \ref{assum.theta1} and \ref{assum.WZ} hold, then there exists a constant $C$ such that for any $k \in \{ 0, 1, \dots, N-1 \}$, $\varphi \in \Phi_{2T}\cap\cC^1 ([0, \infty); \hR^d)$, $m \in \{1, 2, 3\}$, and $\xi \in [0, 1]$, 
\begin{align}
&\left| \hE \left\< \nabla^m g( \tau_h \varphi ), (P_{h,t_k}(\varphi)- \tau_h \varphi )^{\otimes m} - (\vec P_{h,t_k}(\varphi)- \tau_h \varphi )^{\otimes m} \right\> \right| 
\leq C(1 + [\varphi]_{Lip}^4)h^{2}, \label{eq.WZ2order1} \\
&\left| \hE \left\<\nabla^{4} g( \tau_h \varphi + \xi (P_{h,t_k}(\varphi)- \tau_h \varphi )), (P_{h,t_k}(\varphi)- \tau_h \varphi )^{\otimes 4} \right\> \right| 
\leq C(1 + [\varphi]_{Lip}^4)h^{2}, \label{eq.WZ2order2} \\
&\left| \hE \left\<\nabla^{4} g( \tau_h \varphi + \xi (\vec P_{h,t_k}(\varphi)- \tau_h \varphi )), (\vec P_{h,t_k}(\varphi)- \tau_h \varphi )^{\otimes 4} \right\> \right| 
\leq C(1 + [\varphi]_{Lip}^4)h^{2}, \label{eq.WZ2order3} 
\end{align}
where $C > 0$ depends on $g$ only through its semi-norm $\|g\|_{\cC_b^4( \Phi_{2T})}$, but is independent of $\varphi$ and $h$. 
\end{lemma}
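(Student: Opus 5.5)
The plan mirrors the proof of Lemma \ref{lem.theta1}, with the theta one-step map $\bar P_{h,t_k}$ replaced by $\vec P_{h,t_k}$ from \eqref{eq.vecP_ht}. The key structural fact is that $\rd W^N_s=h^{-1}\Delta W(t_k)\,\rd s$ on $[t_k,t_{k+1}]$, so $\vec X:=\vec X^{t_k,\varphi}$ solves a random Volterra ODE driven by the Gaussian increment $\Delta W(t_k)$.

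The first step is a pathwise expansion of $\vec X$. Gr\"onwall gives $|\vec X_v-X_0-\varphi_0|\le C(1+|X_0+\varphi_0|)\big(h[\varphi]_{Lip}+h+|\Delta W(t_k)|\big)$. Then \eqref{eq:WZKasp1} gives $\sigma(v,s,x)=\sigma(v,s,x)-\sigma(s,s,x)=O(|v-s|)$ by \eqref{eq.WZasp3}. Hence $\int_{t_k}^v\sigma(v,s,\vec X_s)\,\rd W^N_s=O(h|\Delta W(t_k)|)$, and moments together with \eqref{eq.WZprop2} give $\hE|\vec X_v-\vec X_{t_k}|^p\le C(1+[\varphi]_{Lip}^p)h^{p}$. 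The same bound holds for $\widetilde X$, since its stochastic integral $\int_{t_k}^v\sigma(v,s,\widetilde X_s)\,\rd W_s$ has integrand $O(v-s)$ and is therefore $O(h^{3/2})$ in $L^p$. The same computation shows that both $\int_{t_k}^{t_{k+1}}\sigma(t_{k+1}+\cdot,s,\cdot)\,\rd W_s$ and $\int\sigma(t_{k+1}+\cdot,s,\cdot)\,\rd W^N_s$ have $\|\cdot\|_{L^\infty}$-moments of order $h^{p/2}$ only, because the kernel at $t_{k+1}+u$ is not small for $u>0$. To control the sup over $u$, I will invoke Lemma \ref{Appendix lem3} and Lemma \ref{Appendix lem2} for the $W$-integrals, and a direct pathwise bound for the $W^N$-integrals. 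This yields \eqref{eq.WZ2order2} and \eqref{eq.WZ2order3} immediately: each increment $P_{h,t_k}(\varphi)-\tau_h\varphi$ and $\vec P_{h,t_k}(\varphi)-\tau_h\varphi$ is $O(h^{1/2})$ in every $L^p(\|\cdot\|_{L^\infty})$, so the fourth power gives $h^2$.

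For \eqref{eq.WZ2order1} I split each increment into a drift part $A$ (respectively $\vec A$) and a noise part $H=\int\sigma(t_{k+1}+\cdot,s,\widetilde X_s)\,\rd W_s$ (respectively $\vec H=\int\sigma(t_{k+1}+\cdot,s,\vec X_s)\,\rd W^N_s$).

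\emph{Case $m=1$.} The drift difference $\int\mu(\cdot,s,\widetilde X_s)-\mu(\cdot,s,\vec X_s)\,\rd s$ is $O(h^2)$ in expectation by the It\^o argument of \eqref{eq.A1bound}, together with a Taylor expansion of $\mu(\cdot,s,\vec X_s)$ around $\vec X_{t_k}$. For the noise, $\hE\langle\nabla g(\tau_h\varphi),H\rangle=0$. For $\vec H$, I expand $\sigma(\cdot,s,\vec X_s)=\sigma(\cdot,s,\vec X_{t_k})+\nabla_x\sigma(\cdot,s,\vec X_{t_k})(\vec X_s-\vec X_{t_k})+O(|\vec X_s-\vec X_{t_k}|^2)$.
\begin{itemize}
\item The zeroth-order term has mean zero, since $\Delta W(t_k)$ is independent of $\vec X_{t_k}$.
\item In the first-order term, the leading part of $\vec X_s-\vec X_{t_k}$ is $\int_{t_k}^s\sigma(s,r,\vec X_{t_k})\,\rd r\,h^{-1}\Delta W(t_k)$, which is only $O(h|\Delta W|)$ because of \eqref{eq:WZKasp1}. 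Multiplied by $h^{-1}\Delta W\,\rd s$ and integrated, this gives $O(h\cdot h)=h^2$.
\item The remainder terms are smaller.
\end{itemize}

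\emph{Cases $m=2,3$.} As in the proof of Lemma \ref{lem.theta1}, all cross terms containing $A$ or $\vec A$ are $O(h^2)$ by Cauchy--Schwarz. The main term is $\hE\langle\nabla^2 g,H^{\otimes2}-\vec H^{\otimes2}\rangle$. By It\^o isometry, $\hE H^{\otimes 2}=\int_{t_k}^{t_{k+1}}\hE\,\sigma(\cdot,s,\widetilde X_s)^{\otimes2}\,\rd s$. For $\vec H$, I replace $\vec X_s$ by $\vec X_{t_k}$ at cost $O(h^{1/2}\cdot h)$ in $L^1$, using that $\|\vec H\|$ is $O(h^{1/2})$ and $\vec X_s-\vec X_{t_k}=O(h)$. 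Since $\hE\,\Delta W^{\otimes 2}=hI$, this leaves $h^{-2}\iint\sigma(\cdot,s,x)\otimes\sigma(\cdot,r,x)\,\rd s\,\rd r\cdot h$. This matches $\int\sigma^{\otimes2}\,\rd s$ up to $O(h^2)$, using the $s$-Lipschitz bound \eqref{eq:asp1} and the It\^o argument of \eqref{eq.A1bound} for $\widetilde X_s$ versus $\widetilde X_{t_k}$. The case $m=3$ involves odd moments of $\Delta W$. The leading cubic terms have zero mean, and deviations cost an extra $h^{1/2}$ on top of the $h^{3/2}$ size of the cubic term, giving $h^2$.

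The main obstacle I expect is the first-order correction in the case $m=1$ and the corresponding correction in $m=2$. Wong--Zakai normally produces a Stratonovich correction of size $\frac12\nabla_x\sigma\,\sigma\,h$ per step. This would destroy the rate, and it is exactly \eqref{eq:WZKasp1} that kills it, via the extra factor $(s-r)$ in $\sigma(s,r,\cdot)$. I will carry out that computation carefully, tracking that each factor $h^{-1}\Delta W$ is compensated.
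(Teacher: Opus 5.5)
Your handling of \eqref{eq.WZ2order2}, \eqref{eq.WZ2order3} and of the case $m=1$ is essentially the paper's. You Taylor-expand $\sigma(t_{k+1}+\cdot,s,\vec X_s)$ about $\vec X_{t_k}$, kill the zeroth-order term by $\hE\,\Delta W(t_k)=0$, and get the surviving $h^2$ from the $\sigma$-part of $\vec X_s-\vec X_{t_k}$ (the paper's $\partial_1\sigma$-term), which is small only because of \eqref{eq:WZKasp1}.

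For $m=2,3$ you take a different route. You match second moments via $\hE[\Delta W(t_k)\Delta W(t_k)^{\top}]=hI$ and use vanishing odd Gaussian moments. The paper instead proves one strong coupling estimate, $\hE\|H-\vec H\|_{L^\infty}^2\le C(1+[\varphi]_{Lip}^4)h^3$. It freezes both integrands at $(t_k,\vec X_{t_k})$, where the two noises coincide because $\int_{t_k}^{t_{k+1}}\rd W_s=\int_{t_k}^{t_{k+1}}\rd W_s^N$. Every freezing error is $O(h^{3/2})$ thanks to the $O(h)$-regularity of $\widetilde X$ and $\vec X$. After that, every $m=2,3$ term, main or cross, falls to Cauchy--Schwarz, with no It\^o-isometry identity on $\Phi_{2T}$ needed. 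Your moment matching works for the main terms but needs more bookkeeping.

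Also, the It\^o argument of \eqref{eq.A1bound} is unnecessary here. You already have $\hE|\widetilde X_s-\widetilde X_{t_k}|^p+\hE|\vec X_s-\vec X_{t_k}|^p\le C(1+[\varphi]_{Lip}^p)h^p$, so plain Lipschitz bounds give $O(h^2)$, as in the paper.

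One step fails as written. For $m=2$, the cross terms $\hE\langle\nabla^2 g(\tau_h\varphi),A\otimes H\rangle$ and $\hE\langle\nabla^2 g(\tau_h\varphi),\vec A\otimes\vec H\rangle$ are each only $O(h\cdot h^{1/2})=O(h^{3/2})$ by Cauchy--Schwarz. In Lemma~\ref{lem.theta1} the analogous terms were paired against the difference of the two noise parts. Here that pairing needs $\hE\|H-\vec H\|_{L^\infty}^2\le Ch^2$, which your plan never proves. There are two fixes:
\begin{itemize}
\item establish the coupling bound above; or
\item use that $\widetilde X_{t_k}=\vec X_{t_k}=X_0+\varphi_0$ is deterministic. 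Then $A_0:=\int_{t_k}^{t_{k+1}}\mu(t_{k+1}+\cdot,s,X_0+\varphi_0)\,\rd s$ is deterministic, so $\hE\langle\nabla^2 g,A_0\otimes H\rangle=\hE\langle\nabla^2 g,A_0\otimes\vec H_0\rangle=0$, with $\vec H_0$ obtained by freezing $\vec X_s$ at $X_0+\varphi_0$. Meanwhile $A-A_0$ and $\vec A-A_0$ are $O(h^2)$ and $\vec H-\vec H_0$ is $O(h^{3/2})$.
\end{itemize}

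The same care is needed in two other places.
\begin{itemize}
\item In $m=1$, the $\dot\varphi$- and $\mu$-parts of $\vec X_s-\vec X_{t_k}$ are $O(h)$, larger than the $\sigma$-part. So they are not ``smaller remainders'': a size bound gives only $h^{3/2}$. They contribute $O(h^{5/2})$ only after freezing at $X_0+\varphi_0$ and using $\hE\,\Delta W(t_k)=0$, as in the paper's $(\mathrm{M_I})_{2,2,1}$ and $(\mathrm{M_I})_{2,2,2}$.
\item The cost of replacing $\vec X_s$ by $\vec X_{t_k}$ in $\vec H^{\otimes 2}$ must be $O(h^2)$, not $O(h^{1/2}\cdot h)$. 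It is, because $\vec H-\vec H_0$ carries the extra factor $|\Delta W(t_k)|\sim h^{1/2}$, and you need that factor.
\end{itemize}
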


\begin{proof}
Throughout this proof, we denote $\widetilde{X}:=\widetilde{X}^{t_k,\varphi}$ and $\vec{X}:=\vec{X}^{t_k,\varphi}$ for short. It follows from \eqref{eq.def:vecX} that for $v \in [t_k, t_{k+1} )$ and sufficiently small $\varepsilon > 0$, 
\begin{align*}
\vec{X}_{v+\varepsilon}-\vec{X}_v 
&= \varphi_{v+\varepsilon -t_k}-\varphi_{v-t_k}+ \int_v^{v+\varepsilon} \mu(v+\varepsilon, s, \vec{X}_s) \rd s + \int_{t_k}^v \mu(v+\varepsilon, s, \vec{X}_s) - \mu(v, s, \vec{X}_s) \rd s \\ 
&\quad + \int_v^{v+\varepsilon} \sigma(v+\varepsilon, s, \vec{X}_s) \rd W_s^N + \int_{t_k}^v \sigma(v+\varepsilon, s, \vec{X}_s) - \sigma(v, s, \vec{X}_s) \rd W_s^N, 
\end{align*}
which implies the differential 
\begin{align*}
\rd \vec{X}_v 
&= \dot{\varphi}_{v-t_k} \rd v + \mu(v, v, \vec{X}_v) \rd v + \left( \int_{t_k}^v \partial_1 \mu(v, s, \vec{X}_s) \rd s \right) \rd v \\ 
&\quad + \sigma(v, v, \vec{X}_v) \rd W_v^N + \left( \int_{t_k}^v \partial_1 \sigma(v, s, \vec{X}_s) \rd W_s^N \right) \rd v. 
\end{align*} 
Then, by \eqref{eq:WZKasp1}, for $t_k \leq t' < t \leq t_{k+1}$, 
\begin{align} \label{eq.increment vecX}
\vec{X}_t - \vec{X}_{t'} 
&= \int_{t'}^t \dot{\varphi}_{v-t_k} \rd v + \int_{t'}^t \mu(v, v, \vec{X}_v) \rd v + \int_{t'}^t \int_{t_k}^v \partial_1 \mu(v, s, \vec{X}_s) \rd s \rd v \notag \\
&\quad + \int_{t'}^t \int_{t_k}^v \partial_1 \sigma(v, s, \vec{X}_s) \rd W_s^N \rd v. 
\end{align}
By H\"older's inequality, the Lipschitz continuity of $\varphi$, the linear growth property of $\mu$, \eqref{eq.WZasp3}, \eqref{eq.dWN}, and \eqref{eq.WZprop2}, one has
\begin{align} 
\hE |\vec{X}_t - \vec{X}_{t'}|^{4} 
&\leq Ch^3 \int_{t'}^t \hE (|\dot{\varphi}_{v-t_k}|^4) \rd v + Ch^3 \int_{t'}^t \hE(1+|\vec{X}_v|^4) \rd v \notag \\
&\quad + Ch^6 \int_{t'}^t \int_{t_k}^v \hE(1+ |\vec{X}_s|^4) \rd s \rd v + Ch^{6} \int_{t'}^t \int_{t_k}^v \hE \left( \left| \frac{\rd W_s^N}{\rd s} \right|^{4} \right) \rd s \rd v \notag \\
&\leq C[\varphi]_{Lip}^{4} h^{4} + C(1+[\varphi]_{Lip}^4)h^4 + C(1+[\varphi]_{Lip}^4)h^{8} + Ch^6 \notag\\
&\leq C (1+[\varphi]_{Lip}^4) h^{4}. \label{eq.regularity vecX}
\end{align}
Similarly, from \eqref{eq.dtildeX}, using H\"older's inequality, the Burkholder--Davis--Gundy inequality, \eqref{eq:WZKasp1}, \eqref{eq.WZasp3}, and \eqref{eq.WZprop2}, one obtains
\begin{align} \label{eq.regularity tildeX}
\hE |\widetilde{X}_t - \widetilde{X}_{t'}|^4 \leq C(1+[\varphi]_{Lip}^4)h^4, \qquad t_k \leq t' < t \leq t_{k+1}.
\end{align}
In addition, note that $\widetilde X_{t_k} = \vec X_{t_k}$. By \eqref{eq.regularity vecX} and \eqref{eq.regularity tildeX}, 
\begin{align} \label{eq.strong}
\hE |\widetilde X_s - \vec X_s |^{4} \leq 8 \hE |\widetilde X_s - \widetilde X_{t_k} |^{4} + 8 \hE |\vec X_{t_k} - \vec X_s |^{4} \leq C (1+[\varphi]_{Lip}^{4})h^{4}, \quad t_k < s \leq t_{k+1}.
\end{align}

\textit{Proof of \eqref{eq.WZ2order1} with $m = 1$}. Using \eqref{eq.def:P_rt} and \eqref{eq.vecP_ht} shows 
\begin{align*}
P_{h,t_k}(\varphi)- \tau_h \varphi 
&= \int_{t_k}^{t_{k+1}} \mu( t_{k+1}+\cdot, s, \widetilde{X}_s) \rd s + \widetilde{H}(\cdot), \\ 
\vec P_{h,t_k}(\varphi)- \tau_h \varphi
&= \int_{t_k}^{t_{k+1}} \mu(t_{k+1}+\cdot, s, \vec X_s ) \rd s + \vec H(\cdot), 
\end{align*} 
where $\widetilde{H}(\cdot)$ and $\vec H(\cdot)$ are defined by 
\begin{align} \label{eq.defH}
\widetilde{H}(\cdot) = \int_{t_k}^{t_{k+1}} \sigma (t_{k+1}+\cdot, s, \widetilde{X}_{s}) \rd W_{s}, 
\qquad
\vec H(\cdot) = \int_{t_k}^{t_{k+1}} \sigma (t_{k+1}+\cdot, s, \vec{X}_{s}) \rd W_{s}^N.
\end{align}

Using the triangle inequality and \eqref{eq.dWN} yields
\begin{align*}
&\quad\, \left| \hE \< \nabla g( \tau_h \varphi ), (P_{h,t_k}(\varphi)- \tau_h \varphi ) - (\vec P_{h,t_k}(\varphi)- \tau_h \varphi ) \> \right| \\
&\leq \left| \int_{t_k}^{t_{k+1}} \hE \< \nabla g( \tau_h \varphi ), \mu(t_{k+1}+\cdot, s, \widetilde X_{s}) - \mu(t_{k+1}+\cdot, s, \vec X_{s}) \> \rd s \right| \\
&\quad + \left| \int_{t_k}^{t_{k+1}} \hE \left\< \nabla g( \tau_h \varphi ), \frac{\Delta W(t_k)}{h} \sigma(t_{k+1}+\cdot,s,\vec{X}_s) \right\> \rd s \right| \\
&=: (\mathrm{M_I})_1 + (\mathrm{M_I})_2.
\end{align*}
For $(\mathrm{M_I})_1$, since $\nabla g$ and $\nabla_x \mu$ are bounded, together with \eqref{eq.strong}, one has $(\mathrm{M_I})_1 \leq C (1+[\varphi]_{Lip})h^2 $.

For $(\mathrm{M_I})_2$, one can arrive at
\begin{small}
\begin{align*}
&\quad\ (\mathrm{M_I})_2 \\
&\leq \left| \int_{t_k}^{t_{k+1}} \hE \big\< \nabla g( \tau_h \varphi ), \frac{\Delta W(t_k)}{h} \sigma(t_{k+1}+\cdot, s, \vec X_{t_k}) \big\> \rd s \right| \\
& + \left| \int_{t_k}^{t_{k+1}} \hE \big\< \nabla g( \tau_h \varphi ), \frac{\Delta W(t_k)}{h} \nabla_x \sigma(t_{k+1}+\cdot, s, \vec X_{t_k}) (\vec X_s - \vec X_{t_k}) \big\> \rd s \right| \\
& + \left| \int_{t_k}^{t_{k+1}} \hE \big\< \nabla g( \tau_h \varphi ), \frac{\Delta W(t_k)}{h} \int_0^1 \<\nabla_{xx} \sigma(t_{k+1}+\cdot, s, \vec X_{t_k} + \xi (\vec X_s - \vec X_{t_k}) ), (\vec X_s - \vec X_{t_k})^{\otimes 2} \> (1-\xi) \rd \xi \big\> \rd s \right| \\
&=: (\mathrm{M_I})_{2,1} + (\mathrm{M_I})_{2,2} + (\mathrm{M_I})_{2,3},
\end{align*}
\end{small}
where we used the Taylor expansion 
\begin{align*}
\sigma(t_{k+1}+\cdot, s, \vec X_s) 
&= \sigma(t_{k+1}+\cdot, s, \vec X_{t_k}) + \nabla_x \sigma(t_{k+1}+\cdot, s, \vec X_{t_k}) (\vec X_s - \vec X_{t_k}) \\
&\quad + \int_0^1 \<\nabla_{xx} \sigma(t_{k+1}+\cdot, s, \vec X_{t_k} + \xi (\vec X_s - \vec X_{t_k}) ), (\vec X_s - \vec X_{t_k})^{\otimes 2} \> (1-\xi) \rd \xi. 
\end{align*} 
For $(\mathrm{M_I})_{2,1}$, since $\Delta W(t_k)$ is independent of $\cF_{t_k}$, while $\sigma(t_{k+1}+u, s, \vec X_{t_k}) \in \cF_{t_k}$, one has
\begin{align*}
\hE [\Delta W(t_k) \sigma(t_{k+1}+u, s, \vec X_{t_k})] = \hE[\Delta W(t_k)] \hE[\sigma(t_{k+1}+u, s, \vec X_{t_k})] =0, \qquad u\geq0. 
\end{align*}
Hence, $(\mathrm{M_I})_{2,1}=0$.

For $(\mathrm{M_I})_{2,2}$, it follows from \eqref{eq.increment vecX} that 
\begin{small}
\begin{align*}
(\mathrm{M_I})_{2,2} 
&\leq \left| \int_{t_k}^{t_{k+1}} \hE \left\< \nabla g( \tau_h \varphi ), \frac{\Delta W(t_k)}{h} \nabla_x \sigma(t_{k+1}+\cdot, s, \vec X_{t_k}) \int_{t_k}^s \dot{\varphi}_{v-t_k} \rd v \right\> \rd s \right| \\
&\quad + \left| \int_{t_k}^{t_{k+1}} \hE \left\< \nabla g( \tau_h \varphi ), \frac{\Delta W(t_k)}{h} \nabla_x \sigma(t_{k+1}+\cdot, s, \vec X_{t_k}) \int_{t_k}^s \mu(v, v, \vec{X}_v) \rd v \right\> \rd s \right| \\
&\quad + \left| \int_{t_k}^{t_{k+1}} \hE \left\< \nabla g( \tau_h \varphi ), \frac{\Delta W(t_k)}{h} \nabla_x \sigma(t_{k+1}+\cdot, s, \vec X_{t_k}) \int_{t_k}^s \int_{t_k}^v \partial_1 \mu(v, r, \vec{X}_r) \rd r \rd v \right\> \rd s \right| \\
&\quad + \left| \int_{t_k}^{t_{k+1}} \hE \left\< \nabla g( \tau_h \varphi ), \frac{\Delta W(t_k)}{h} \nabla_x \sigma(t_{k+1}+\cdot, s, \vec X_{t_k}) \int_{t_k}^s \int_{t_k}^v \partial_1 \sigma(v, r, \vec{X}_r) \rd W_r^N \rd v \right\> \rd s \right| \\
&=:(\mathrm{M_I})_{2,2,1} + (\mathrm{M_I})_{2,2,2} + (\mathrm{M_I})_{2,2,3} + (\mathrm{M_I})_{2,2,4}.
\end{align*}
\end{small}
Analogous to $(\mathrm{M_I})_{2,1}$, it holds that $(\mathrm{M_I})_{2,2,1}=0$. For $(\mathrm{M_I})_{2,2,2}$, one can deduce that
\begin{small}
\begin{align*}
(\mathrm{M_I})_{2,2,2}
&\leq \left| \int_{t_k}^{t_{k+1}} \hE \left\< \nabla g( \tau_h \varphi ), \frac{\Delta W(t_k)}{h} \nabla_x \sigma(t_{k+1}+\cdot, s, \vec X_{t_k}) \int_{t_k}^s \mu(v, v, \vec{X}_{t_k}) \rd v \right\> \rd s \right| \\
& + \left| \int_{t_k}^{t_{k+1}} \hE \left\< \nabla g( \tau_h \varphi ), \frac{\Delta W(t_k)}{h} \nabla_x \sigma(t_{k+1}+\cdot, s, \vec X_{t_k}) \int_{t_k}^s \big( \mu(v, v, \vec{X}_v) - \mu(v, v, \vec{X}_{t_k}) \big) \rd v \right\> \rd s \right| \\
&= \left| \int_{t_k}^{t_{k+1}} \hE \left\< \nabla g( \tau_h \varphi ), \frac{\Delta W(t_k)}{h} \nabla_x \sigma(t_{k+1}+\cdot, s, \vec X_{t_k}) \int_{t_k}^s \big( \mu(v, v, \vec{X}_v) - \mu(v, v, \vec{X}_{t_k}) \big) \rd v \right\> \rd s \right| \\
&\leq C (1+[\varphi]_{Lip}) h^\frac{5}{2}, 
\end{align*}
\end{small}
\!\!where the boundedness of $\nabla g$ and $\nabla_x \sigma$, the Cauchy--Schwarz inequality, and \eqref{eq.regularity vecX} are used in last step. For $(\mathrm{M_I})_{2,2,3}$, since $\nabla g$ and $\nabla_x \sigma$ are bounded, combining the Cauchy--Schwarz inequality, \eqref{eq.WZasp3}, and \eqref{eq.WZprop2}, one obtains 
\begin{align*}
(\mathrm{M_I})_{2,2,3} 
&\leq \frac{C}{h} \int_{t_k}^{t_{k+1}} \int_{t_k}^s \int_{t_k}^v \hE[ |\Delta W(t_k)| (1+|\vec{X}_r|) ] \rd r \rd v \rd s \\
&\leq \frac{C}{h} \int_{t_k}^{t_{k+1}} \int_{t_k}^s \int_{t_k}^v \big(\hE |\Delta W(t_k)|^2\big)^\frac{1}{2} \big(\hE (1+|\vec{X}_r|)^2\big)^\frac{1}{2} \rd r \rd v \rd s \\
&\leq C (1+[\varphi]_{Lip}) h^\frac{5}{2}.
\end{align*}
For $(\mathrm{M_I})_{2,2,4}$, applying the boundedness of $\nabla g$, $\nabla_x \sigma$, and $\partial_1 \sigma$, together with \eqref{eq.dWN}, indicates 
\begin{align*}
(\mathrm{M_I})_{2,2,4} 
&\leq \frac{C}{h} \int_{t_k}^{t_{k+1}} \int_{t_k}^s \int_{t_k}^v \hE \left[ |\Delta W(t_k)| \left| \frac{\rd W_r^N}{\rd r} \right| \right] \rd r \rd v \rd s \\
&= \frac{C}{h^2} \int_{t_k}^{t_{k+1}} \int_{t_k}^s \int_{t_k}^v \hE |\Delta W(t_k)|^2 \rd r \rd v \rd s \\
&\leq Ch^2.
\end{align*}
Thus, one can arrive at $(\mathrm{M_I})_{2,2} \leq C (1+[\varphi]_{Lip}) h^2$.

For $(\mathrm{M_I})_{2,3}$, since $\nabla g$ and $\nabla_{xx} \sigma$ are bounded, by the Cauchy--Schwarz inequality and \eqref{eq.regularity vecX}, one has
\begin{align*}
(\mathrm{M_I})_{2,3} 
&\leq \frac{C}{h} \int_{t_k}^{t_{k+1}} \hE [ |\Delta W(t_k)| |\vec X_s - \vec X_{t_k}|^2] \rd s \\
&\leq \frac{C}{h} \int_{t_k}^{t_{k+1}} (\hE |\Delta W(t_k)|^2)^\frac{1}{2} \left(\hE |\vec X_s - \vec X_{t_k}|^4 \right)^\frac{1}{2} \rd s \\
&\leq C (1+[\varphi]_{Lip}^2) h^\frac{5}{2}.
\end{align*}
Thus, one can conclude that $(\mathrm{M_I})_{2} \leq C (1+[\varphi]_{Lip}^2) h^2$, which, together with $(\mathrm{M_I})_1 \leq C (1+[\varphi]_{Lip})h^2$, implies 
\begin{align*}
\left| \hE \< \nabla g( \tau_h \varphi ), (P_{h,t_k}(\varphi)- \tau_h \varphi ) - (\vec P_{h,t_k}(\varphi)- \tau_h \varphi ) \> \right| \leq C (1+[\varphi]_{Lip}^2) h^2.
\end{align*}

\textit{Proof of \eqref{eq.WZ2order1} with $m = 2$}. 
Using \eqref{eq.def:P_rt}, \eqref{eq.vecP_ht}, and the boundedness of $\nabla^2 g$ shows 
\begin{align} \label{eq.WZm=2}
\left | \hE \left\< \nabla^2 g( \tau_h \varphi ), (P_{h,t_k}(\varphi)- \tau_h \varphi )^{\otimes 2} - (\bar P_{h,t_k}(\varphi)- \tau_h \varphi )^{\otimes 2} \right\> \right | 
\leq C \sum_{i=1}^4 (\mathrm{M_{II}})_{i} 
\end{align}
with
\begin{align*}
& (\mathrm{M_{II}})_{1} := \hE \bigg[ \left\| \int_{t_k}^{t_{k+1}} \mu( t_{k+1}+\cdot, s, \widetilde{X}_s) \rd s \right\|_{L^ \infty}^2 + \left\| \int_{t_k}^{t_{k+1}} \mu(t_{k+1}+\cdot, s, \vec X_s ) \rd s \right\|_{L^ \infty}^2 \bigg], \\
& (\mathrm{M_{II}})_{2} := \hE \left[ \left\| \int_{t_k}^{t_{k+1}} \mu( t_{k+1}+\cdot, s, \widetilde{X}_s) - \mu( t_{k+1}+\cdot, s, \vec{X}_{t_k}) \rd s \right\|_{L^ \infty} \big\| \widetilde{H} \big\|_{L^ \infty} \right], \\
& (\mathrm{M_{II}})_{3} := \hE \left[ \left\| \int_{t_k}^{t_{k+1}} \mu( t_{k+1}+\cdot, s, \vec{X}_{t_k}) \rd s \right\|_{L^ \infty} \big\| \widetilde{H} - \vec H \big\|_{L^ \infty} \right], \\
& (\mathrm{M_{II}})_{4} := \left| \hE \left\< \nabla^2 g( \tau_h \varphi ), \widetilde{H}^{\otimes 2} - \vec H^{\otimes 2} \right\> \right|, 
\end{align*} 
where $\widetilde{H}$ and $\vec{H}$ are defined in \eqref{eq.defH}. For $(\mathrm{M_{II}})_{1}$, it follows from \eqref{eq.Appendix lem2_1} and \eqref{eq.WZprop2} that 
\begin{align} \label{eq.WZm=2_1}
(\mathrm{M_{II}})_{1} \leq
C(1+[\varphi]_{Lip}^2)h^2.
\end{align}
For $(\mathrm{M_{II}})_{2}$, using the Cauchy--Schwarz inequality, together with \eqref{eq.Appendix lem2_2}, \eqref{eq.WZprop2}, and \eqref{eq.strong} yields 
\begin{align} \label{eq.WZm=2_2} 
(\mathrm{M_{II}})_{2} \leq C (1+[\varphi]_{Lip}^2) h^\frac{5}{2}. 
\end{align}

To facilitate the analysis of $(\mathrm{M_{II}})_{3}$, we deal with 
\begin{align*}
\hE \big[ \| \widetilde{H}-\vec{H} \|_{L^ \infty}^2 \big]
&= \hE \left[ \left\| \int_{t_k}^{t_{k+1}} \big[ \sigma( t_{k+1}+\cdot, s, \widetilde{X}_s) \rd W_s - \sigma( t_{k+1}+\cdot, s, \vec{X}_s) \rd W_s^N \big] \right\|_{L^ \infty}^2 \right] \\
& \leq C \hE \left[ \left\| \int_{t_k}^{t_{k+1}} \sigma( t_{k+1}+\cdot, s, \widetilde{X}_s) -\sigma( t_{k+1}+\cdot, t_k, \widetilde{X}_s) \rd W_s \right\|_{L^ \infty}^2 \right] \\
&\quad + C \hE \left[ \left\| \int_{t_k}^{t_{k+1}} \sigma( t_{k+1}+\cdot, t_k, \widetilde{X}_s) - \sigma( t_{k+1}+\cdot, t_k, \vec{X}_{t_k}) \rd W_s \right\|_{L^ \infty}^2 \right] \\
&\quad + C \hE \left[ \left\| \int_{t_k}^{t_{k+1}} \sigma( t_{k+1}+\cdot, t_k, \vec{X}_{t_k}) (\rd W_s - \rd W_s^N) \right\|_{L^ \infty}^2 \right] \\
&\quad + C \hE \left[ \left\| \int_{t_k}^{t_{k+1}} \sigma( t_{k+1}+\cdot, t_k, \vec{X}_{t_k}) - \sigma( t_{k+1}+\cdot, s, \vec{X}_{t_k}) \rd W_s^N \right\|_{L^ \infty}^2 \right] \\
&\quad + C \hE \left[ \left\| \int_{t_k}^{t_{k+1}} \sigma( t_{k+1}+\cdot, s, \vec{X}_{t_k}) - \sigma( t_{k+1}+\cdot, s, \vec{X}_s) \rd W_s^N \right\|_{L^ \infty}^2 \right] \\
&=:(\mathrm{V})_1 + (\mathrm{V})_2 + (\mathrm{V})_3 + (\mathrm{V})_4 + (\mathrm{V})_5.
\end{align*}
Applying Lemma \ref{Appendix lem3}, \eqref{eq.WZprop2}, and \eqref{eq.strong} reveals that $(\mathrm{V})_1 + (\mathrm{V})_2 \leq C(1+[\varphi]_{Lip}^4) h^3$. From \eqref{eq.dWN}, one obtains $\int_{t_k}^{t_{k+1}} \rd W_s = \int_{t_k}^{t_{k+1}} \rd W_s^N $, which immediately yields $(\mathrm{V})_3=0$. Using \eqref{eq.dWN}, \eqref{eq:asp1}, the Cauchy--Schwarz inequality, and \eqref{eq.WZprop2} shows 
\begin{align*}
(\mathrm{V})_4 &\leq Ch \int_{t_k}^{t_{k+1}} \hE \left[ |t_k-s|^2 (1+|\vec{X}_{t_k}|)^2 \left| \frac{\rd W_s^N}{\rd s} \right|^2 \right] \rd s \\
&\leq Ch^3 \int_{t_k}^{t_{k+1}} \hE \left[ (1+|\vec{X}_{t_k}|)^2 \left| \frac{\rd W_s^N}{\rd s} \right|^2 \right] \rd s \\
&\leq Ch \int_{t_k}^{t_{k+1}} (\hE (1+|\vec{X}_{t_k}|)^4)^\frac{1}{2} ( \hE | \Delta W(t_k) |^4 )^\frac{1}{2} \rd s \\
&\leq C (1+[\varphi]_{Lip}^2) h^3.
\end{align*}
By a similar step as in the above estimate, together with \eqref{eq.regularity vecX}, one obtains $(\mathrm{V})_5 \leq C (1+[\varphi]_{Lip}^2) h^3$. Thus,
\begin{align} \label{eq.dW_s-dW_s^N}
\hE \big[ \| \widetilde{H}-\vec{H} \|_{L^ \infty}^2 \big] \leq C(1+[\varphi]_{Lip}^4) h^3, 
\end{align} 
which together with the Cauchy--Schwarz inequality, \eqref{eq.Appendix lem2_1}, and \eqref{eq.WZprop2} yields
\begin{align} \label{eq.WZm=2_3} 
(\mathrm{M_{II}})_{3} \leq C (1+[\varphi]_{Lip}^3) h^\frac{5}{2}. 
\end{align}

To estimate $(\mathrm{M_{II}})_{4}$, we note that by \eqref{eq.Appendix lem2_2}, \eqref{eq.Appendix lem2_3}, and \eqref{eq.WZprop2}, 
\begin{align} \label{eq.H_moment}
\hE \left[ \big\| \widetilde{H} \big\|_{L^\infty}^p + \big\| \vec H \big\|_{L^\infty}^p \right] 
\leq C (1+[\varphi]_{Lip}^p) h^{\frac{p}{2}}, \qquad p \geq 2.
\end{align}
Then applying the boundedness of $\nabla^2 g$, the Cauchy--Schwarz inequality, \eqref{eq.H_moment}, and \eqref{eq.dW_s-dW_s^N} shows 
\begin{align}
(\mathrm{M_{II}})_{4} 
\leq C \hE \left[ \left( \big\| \widetilde{H} \big\|_{L^\infty} + \big\| \vec H \big\|_{L^\infty} \right) \big\| \widetilde{H}-\vec{H} \big\|_{L^ \infty} \right] 
\leq C (1+[\varphi]_{Lip}^3) h^2. \label{eq.WZm=2_4}
\end{align}
By substituting the estimates \eqref{eq.WZm=2_1}, \eqref{eq.WZm=2_2}, \eqref{eq.WZm=2_3}, and \eqref{eq.WZm=2_4} into \eqref{eq.WZm=2}, we complete the proof of \eqref{eq.WZ2order1} in the case $m=2$.

\textit{Proof of \eqref{eq.WZ2order1} with $m = 3$}. Using \eqref{eq.def:P_rt} and \eqref{eq.vecP_ht}, together with the assumption that $\nabla^3 g$ is bounded, one has 
\begin{align} \label{eq.WZm=3}
\left | \hE \left\< \nabla^3 g( \tau_h \varphi ), (P_{h,t_k}(\varphi) - \tau_h \varphi )^{\otimes 3} - (\vec P_{h,t_k}(\varphi)- \tau_h \varphi )^{\otimes 3} \right\> \right | 
\leq C \sum_{i=1}^4 (\mathrm{M_{III}})_{i} 
\end{align}
with
\begin{small}
\begin{align*}
&(\mathrm{M_{III}})_{1} := \hE \left\| \int_{t_k}^{t_{k+1}} \mu( t_{k+1}+\cdot, s, \widetilde{X}_s) \rd s \right\|_{L^ \infty}^3 + \hE \left\| \int_{t_k}^{t_{k+1}} \mu( t_{k+1}+\cdot, s, \vec{X}_s) \rd s \right\|_{L^ \infty}^3, \\
&(\mathrm{M_{III}})_{2} := \hE \bigg[ \left\| \int_{t_k}^{t_{k+1}} \mu( t_{k+1}+\cdot, s, \widetilde{X}_s) \rd s \right\|_{L^ \infty}^2 \big\| \widetilde{H} \big\|_{L^ \infty} + \left\| \int_{t_k}^{t_{k+1}} \mu(t_{k+1}+\cdot, s, \vec X_s ) \rd s \right\|_{L^ \infty}^2 \big\| \vec H \big\|_{L^ \infty} \bigg], \\
&(\mathrm{M_{III}})_{3} := \hE \bigg[ \left\| \int_{t_k}^{t_{k+1}} \mu( t_{k+1}+\cdot, s, \widetilde{X}_s) \rd s \right\|_{L^ \infty} \big\| \widetilde{H} \big\|_{L^ \infty}^2 + \left\| \int_{t_k}^{t_{k+1}} \mu(t_{k+1}+\cdot, s, \vec X_s ) \rd s \right\|_{L^ \infty} \big\| \vec H \big\|_{L^ \infty}^2 \bigg], \\
&(\mathrm{M_{III}})_{4} := \left| \hE \left\< \nabla^3 g( \tau_h \varphi ), \widetilde{H}^{\otimes 3} - \vec H^{\otimes 3} \right\> \right|, 
\end{align*}
\end{small}
where $\widetilde{H}$ and $\vec{H}$ are defined in \eqref{eq.defH}. It follows from the Cauchy--Schwarz inequality, Lemma \ref{Appendix lem2}, and \eqref{eq.WZprop2} that
\begin{align} \label{eq.WZm=3_1}
(\mathrm{M_{III}})_{1} + (\mathrm{M_{III}})_{2} + (\mathrm{M_{III}})_{3} \leq C(1+[\varphi]_{Lip}^3)h^2.
\end{align}

For $(\mathrm{M_{III}})_{4}$, with the definition \eqref{eq.defH} at hand, using the boundedness of $\nabla^3 g$ and the Cauchy--Schwarz inequality yields 
\begin{align*}
(\mathrm{M_{III}})_{4} 
&\leq | \hE \< \nabla^3 g(\varphi), \widetilde{H}^{\otimes 2} \otimes ( \widetilde{H}- \vec{H} ) \> | + | \hE \< \nabla^3 g(\varphi), \widetilde{H} \otimes ( \widetilde{H}- \vec{H} ) \otimes \vec{H} \> | \\
&\quad + | \hE \< \nabla^3 g(\varphi), ( \widetilde{H}- \vec{H} ) \otimes \vec{H}^{\otimes 2} \> | \\
&\leq C \hE \big[ \| \widetilde{H} \|_{L^ \infty}^2 \| \widetilde{H}-\vec{H} \|_{L^ \infty} \big] + C \hE \big[\| \widetilde{H} \|_{L^ \infty} \| \widetilde{H}-\vec{H} \|_{L^ \infty} \| \vec{H} \|_{L^ \infty} \big] \\
&\quad + C \hE \big[ \| \widetilde{H}-\vec{H} \|_{L^ \infty} \| \vec{H} \|_{L^ \infty}^2 \big] \\
&\leq C ( \hE \| \widetilde{H} \|_{L^ \infty}^4 )^{1/2} ( \hE \| \widetilde{H}-\vec{H} \|_{L^ \infty}^2 )^{1/2} \\
&\quad + C ( \hE \| \widetilde{H} \|_{L^ \infty}^4 )^{1/4} ( \hE \| \widetilde{H}-\vec{H} \|_{L^ \infty}^2 )^{1/2} ( \hE \| \vec{H} \|_{L^ \infty}^4 )^{1/4} \\
&\quad + C ( \hE \| \widetilde{H}-\vec{H} \|_{L^ \infty}^2 )^{1/2} ( \hE \| \vec{H} \|_{L^ \infty}^4 )^{1/2}.
\end{align*}
Thus, by \eqref{eq.dW_s-dW_s^N} and \eqref{eq.H_moment}, 
\begin{align} \label{eq.WZm=3_2}
(\mathrm{M_{III}})_{4} \leq C(1+[\varphi]_{Lip}^4)h^\frac{5}{2}.
\end{align} 
Combining the estimates \eqref{eq.WZm=3}, \eqref{eq.WZm=3_1}, and \eqref{eq.WZm=3_2}, we complete the proof of \eqref{eq.WZ2order1} in the case $m=3$.

\textit{Proof of \eqref{eq.WZ2order3}}. By the boundedness of $\nabla^4 g$ and \eqref{eq.vecP_ht}, together with the Cauchy--Schwarz inequality, \eqref{eq.Appendix lem2_1}, \eqref{eq.WZprop2}, and \eqref{eq.H_moment}, one obtains
\begin{align*}
&\quad\, \left| \hE \left\<\nabla^4 g( \tau_h \varphi + \eta (\vec P_{h,t_k}(\varphi)- \tau_h \varphi )), (\vec P_{h,t_k}(\varphi)- \tau_h \varphi )^{\otimes 4} \right\> \right| \\
& \leq C \hE \left[ \left\| \int_{t_k}^{t_{k+1}} \mu( t_{k+1}+\cdot, s, \vec{X}_s) \rd s \right\|_{L^ \infty}^4 \right] + C \hE \left[ \left\| \int_{t_k}^{t_{k+1}} \mu( t_{k+1}+\cdot, s, \vec{X}_s) \rd s \right\|_{L^ \infty}^3 \big\| \vec H \big\|_{L^ \infty} \right] \\
&\quad + C \hE \left[ \left\| \int_{t_k}^{t_{k+1}} \mu( t_{k+1}+\cdot, s, \vec{X}_s) \rd s \right\|_{L^ \infty}^2 \big\| \vec H \big\|_{L^ \infty}^2 \right] \\
&\quad + C \hE \left[ \left\| \int_{t_k}^{t_{k+1}} \mu( t_{k+1}+\cdot, s, \vec{X}_s) \rd s \right\|_{L^ \infty} \big\| \vec H \big\|_{L^ \infty}^3 \right] + C \hE \left[ \big\| \vec H \big\|_{L^ \infty}^4 \right] \\
&\leq C(1+[\varphi]_{Lip}^4)h^2.
\end{align*}
In addition, \eqref{eq.WZ2order2} follows from the proof of \eqref{eq.2order2}. Therefore, the proof of this lemma is complete.
\end{proof}

\begin{theorem} 
If Assumptions \ref{assum.theta1} and \ref{assum.WZ} hold, then for any $f \in \cC_{b}^{4} (\mathbb{R}^d; \mathbb{R})$, the weak error of the Wong--Zakai approximation \eqref{eq.def.WZ} can be controlled as 
\begin{align*}
\left| \mathbb{E}\, f({\bar{\bar{X}}}_T)-\mathbb{E}\, f(X_T) \right| 
\leq C h, 
\end{align*}
where $C>0$ is independent of $h$. 
\end{theorem}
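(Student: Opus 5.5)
The argument will be a direct application of the fundamental weak convergence theorem, Theorem \ref{Thm.ImportantII}, with $q=1$, so that $\ell=\nu=2q+2=4$. The numerical operator is $\bar P_{h,t_j}:=\vec P_{h,t_j}$ from \eqref{eq.vecP_ht}. We first recall the identification made after \eqref{eq.vecP_tk_0}: the scheme produced by \eqref{eq:Xtk} with this operator is exactly the Wong--Zakai approximation \eqref{eq.def.WZ} evaluated at the grid points. In particular $\bar{\bar X}_T = X_0 + \vec Y_{t_N}(0)$, so the weak error in the statement is precisely the quantity controlled by Theorem \ref{Thm.ImportantII}.

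We then check the hypotheses one by one.
\begin{itemize}
\item Assumption \ref{assumption} with $\ell=4$: this follows from Assumption \ref{assum.theta1}, since $\mu$ and $\sigma$ have bounded, continuous state derivatives up to order $4$ and the time variables enter continuously.
\item Assumption \ref{assum.barP is C^1} with $\nu=4$: this is exactly Lemma \ref{lem.WZ2}, which gives $\vec P_{t_k,0}(\widetilde{\mathbf 0})\in\cC^1$ together with the uniform bound $\hE[\vec P_{t_k,0}(\widetilde{\mathbf 0})]_{Lip}^4\le C$.
\item Assumption \ref{assum.importantII} with $q=1$: for every $g\in\cC_b^4(\Phi_{2T};\hR)$, the three required inequalities are exactly \eqref{eq.WZ2order1}--\eqref{eq.WZ2order3} of Lemma \ref{lem.WZ1}. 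The right-hand side there is $C(1+[\varphi]_{Lip}^4)h^2 = C(1+[\varphi]_{Lip}^{2q+2})h^{q+1}$, and the constant depends on $g$ only through $\|g\|_{\cC_b^4(\Phi_{2T})}$, as the assumption demands.
\end{itemize}

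With these in hand, Theorem \ref{Thm.ImportantII} applied to $f\in\cC_b^4(\hR^d;\hR)$ gives $|\hE f(\bar{\bar X}_T)-\hE f(X_T)|\le Ch$.

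The one point needing care is that Assumption \ref{assum.importantII} is invoked for the specific functionals $g=g_{k+1}$ from \eqref{eq.def_gk}, as in the telescoping argument of Proposition \ref{Prop.ImportantI}. Lemma \ref{lem.def of g} guarantees $g_{k+1}\in\cC_b^4$ with seminorm bounded uniformly in $k$ and $h$. Hence the constants from Lemma \ref{lem.WZ1} are uniform over $k$, and the sum of $N$ local errors of size $h^2$ yields the global order $h$.

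No genuine obstacle remains at this stage: the hard one-step estimates, in particular the cancellation in $(\mathrm{M_I})_2$ via independence of $\Delta W(t_k)$ and the bound \eqref{eq.dW_s-dW_s^N}, are already contained in Lemma \ref{lem.WZ1}.
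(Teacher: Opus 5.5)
Your proposal is correct and is exactly the paper's argument: Theorem \ref{Thm.ImportantII} with $q=1$ (so $\ell=\nu=4$), with Assumption \ref{assum.barP is C^1} supplied by Lemma \ref{lem.WZ2} and Assumption \ref{assum.importantII} supplied by Lemma \ref{lem.WZ1}. Your extra checks make explicit what the paper's one-line proof leaves implicit: Assumption \ref{assumption} with $\ell=4$ from Assumption \ref{assum.theta1}, the identification $\bar{\bar X}_T=X_0+\vec Y_{t_N}(0)$, and uniformity over the functionals $g_{k+1}$ via Lemma \ref{lem.def of g}.
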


\begin{proof}
According to the proposed fundamental weak convergence theorem (i.e., Theorem \ref{Thm.ImportantII}), the proof is completed by Lemmas \ref{lem.WZ2} and \ref{lem.WZ1}. 
\end{proof}

\begin{remark}
Notice that the condition \eqref{eq:WZKasp1} in Assumption \ref{assum.WZ} is also required in the strong error analysis of the Wong--Zakai approximation \eqref{eq.def.WZ}; see \cite{XuZhang2024}.
\end{remark}

\section{Numerical experiments}
\label{sec.NumericalExperiments}

In this section, we apply the stochastic theta method \eqref{eq:SthetaSVIE} to a generalized Stein--Stein stochastic volatility model introduced in \cite{BrasFukasawa2025} and numerically investigate its weak convergence rate. The numerical results will confirm the theoretical findings of Theorem \ref{thm.theta}.

Consider the generalized Stein--Stein stochastic volatility model 
\begin{align*}
\begin{cases}
S_t = S_0 + \int_0^t S_s V_s \rd B_s, \\
V_t = V_0 + g_0(t) + \kappa \int_0^t K(t-s) V_s \rd s + \nu \int_0^t K(t-s) \sigma(V_s) \rd W_s,
\end{cases}
\end{align*}
where $t\in[0,T]$, and $K(t)=A_1(A_2+t)^{-1/4}$ is a shifted power-law kernel. The driving Brownian motion $B_t$ is given by $B_t = \rho W_t + \sqrt{1-\rho^2}\,W^\perp_t$, where $\rho\in[-1,1]$ is the correlation coefficient and $W^\perp$ is a standard Brownian motion independent of $W$.

\begin{table}[htbp]
\footnotesize
\caption{The weak error $\left| \hE\, f (\bar{S}_T) - \hE\, f (S_T) \right|$ and convergence order when $\sigma \equiv 1$.}
\label{tab.additive noise} 
\begin{center}
\setlength{\tabcolsep}{9mm}
\begin{tabular}{cccc} 
\hline
Stepsize & $\theta = 0$ & $\theta = 0.5$ & $\theta = 1$ \\
\hline
\ \ $2^{-5}$ & $2.4299e$-$04$ & $2.5509e$-$04$ & $2.6724e$-$04$ \\
\ \ $2^{-6}$ & $1.0843e$-$04$ & $1.1480e$-$04$ & $1.2118e$-$04$ \\
\ \ $2^{-7}$ & $5.4607e$-$05$ & $5.7850e$-$05$ & $6.1104e$-$05$ \\
\ \ $2^{-8}$ & $2.9971e$-$05$ & $3.1575e$-$05$ & $3.3183e$-$05$ \\
\ \ $2^{-9}$ & $1.5365e$-$05$ & $1.6119e$-$05$ & $1.6875e$-$05$ \\
\hline
Weak order & $0.9821$ & $0.9831$ & $0.9839$ \\
\hline
\end{tabular}
\end{center}
\end{table}

\begin{table}[htbp]
\footnotesize
\caption{The weak error $\left| \hE\, f (\bar{S}_T) - \hE\, f (S_T) \right|$ and convergence order when $\sigma(x) = x$.}
\label{tab.multiplicative noise}
\begin{center}
\setlength{\tabcolsep}{9mm}
\begin{tabular}{cccc} 
\hline
Stepsize & $\theta = 0$ & $\theta = 0.5$ & $\theta = 1$ \\
\hline
\ \ $2^{-5}$ & $2.5009e$-$04$ & $2.6223e$-$04$ & $2.7441e$-$04$ \\
\ \ $2^{-6}$ & $1.1544e$-$04$ & $1.2184e$-$04$ & $1.2828e$-$04$ \\
\ \ $2^{-7}$ & $6.1261e$-$05$ & $6.4539e$-$05$ & $6.7836e$-$05$ \\
\ \ $2^{-8}$ & $3.0934e$-$05$ & $3.2558e$-$05$ & $3.4201e$-$05$ \\
\ \ $2^{-9}$ & $1.3979e$-$05$ & $1.4738e$-$05$ & $1.5511e$-$05$ \\
\hline
Weak order & $1.0222$ & $1.0210$ & $1.0197$ \\
\hline
\end{tabular}
\end{center}
\end{table}

In all experiments, the test function is chosen as $f(x)=(x-\mathcal{K})_+$, which represents the payoff function of a European call option with strike price $\mathcal{K}\ge0$. To test the method comprehensively, we consider both additive noise (where $\sigma(V_s) = 1$) and multiplicative noise (where $\sigma(V_s) = V_s$). The shared model parameters are chosen as follows:
\begin{align*}
& T=1,\quad S_0=1,\quad V_0=0.1,\quad \rho=-0.7,\quad \kappa=0.01,\quad \nu=0.05, \\
& A_1=0.3,\quad A_2=0.02,\quad \mathcal{K}=1,\quad g_0(t) = \frac{0.04}{3 A_1} t^{3/4}.
\end{align*}

The simulation results are collected in Tables \ref{tab.additive noise} and \ref{tab.multiplicative noise}. As observed from the empirical data, the weak convergence rate of the stochastic theta method \eqref{eq:SthetaSVIE} is approximately 1, thereby validating the theoretical claims of Theorem \ref{thm.theta}.

\appendix

\section{Auxiliary lemmas}
\label{appendix_AuxiLemm}

\begin{lemma} \label{lem.boundedness of moments}
Let $p \geq 2$ and $0 \leq r_1 < r_2 \leq T$. For $u \geq 0$, define the It\^o-type stochastic integral 
\begin{align*}
\Psi_u := \int_{r_1}^{r_2} F(u, s) \rd W_s, 
\end{align*}
where the integrand $\{F(u, s)\}_{u \geq 0, \, s \in [0, T]}$ is an $\mathbb{R}^{d \times m}$-valued stochastic process adapted to the filtration $\hF$ with respect to its second variable. Assume that $F(u, s)=0$ for $u > 2T$, and that with $C_F > 0$, 
\begin{align*}
\left\| F(u_1, s) - F(u_2, s) \right\|_{L^p(\Omega; \mathbb{R}^{d \times m})} \leq C_F |u_1 - u_2|, \qquad \forall\, u_1,\, u_2 \geq 0,\quad s \in[0, T]. 
\end{align*}
Then there exists a continuous modification $\tilde{\Psi}$ of $\Psi$ satisfying 
\begin{align*}
\hE \big[ \| \tilde{\Psi} \|_{L^ \infty}^p \big]
:= \hE \big[ \sup\nolimits_{u \geq 0} | \tilde{\Psi}_u |^p \big] 
\leq C C_F^p (r_2 - r_1)^{p/2}, 
\end{align*}
where $C := C (p,T) > 0$ is independent of $C_F$ and $(r_2 - r_1)$.
\end{lemma}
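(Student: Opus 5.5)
We prove the statement with the Kolmogorov--Chentsov / Garsia--Rodemich--Rumsey route: first bound the $L^p$ norm of $\Psi_u$ and of its increments in $u$, then turn these pointwise-in-$u$ bounds into a bound on the supremum.

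\textbf{Pointwise and increment moments.} Fix $u_1,u_2\ge 0$. The Burkholder--Davis--Gundy inequality followed by H\"older's inequality in time gives
\begin{align*}
\hE|\Psi_{u_1}-\Psi_{u_2}|^p
\le C_p \hE\Big(\int_{r_1}^{r_2}|F(u_1,s)-F(u_2,s)|^2\rd s\Big)^{p/2}
\le C_p (r_2-r_1)^{p/2-1}\int_{r_1}^{r_2}\hE|F(u_1,s)-F(u_2,s)|^p\rd s .
\end{align*}
By the Lipschitz hypothesis this is at most $C_p C_F^p (r_2-r_1)^{p/2}|u_1-u_2|^p$.

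Since $F(u,s)=0$ for $u>2T$, taking $u_2=2T+1$ gives $\Psi_{u_2}=0$. Hence for all $u\ge0$,
\begin{align*}
\hE|\Psi_u|^p\le C_p C_F^p(2T+1)^p(r_2-r_1)^{p/2},
\end{align*}
and the same computation shows $\Psi_u\equiv0$ for $u>2T$. The parameter domain therefore reduces to the compact interval $[0,2T]$, and both the supremum and the constant only involve $T$.

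\textbf{Continuous modification and sup bound.} We have an increment bound $\hE|\Psi_{u_1}-\Psi_{u_2}|^p\le K|u_1-u_2|^{p}$ with $K=C_pC_F^p(r_2-r_1)^{p/2}$. The exponent is $p=1+(p-1)$ with $p-1\ge1>0$, so Kolmogorov's continuity theorem gives a continuous modification $\tilde\Psi$ on $[0,2T]$; we extend it by $0$ for $u>2T$.

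For the quantitative bound we use the Garsia--Rodemich--Rumsey inequality with $\Psi(x)=x^p$ and $p(x)=x^{\gamma+2/p}$ for a fixed $\gamma\in(0,1-1/p)$, say $\gamma=\tfrac12(1-1/p)$ (admissible because $p\ge2$). It gives
\begin{align*}
\sup_{u,v\in[0,2T]}|\tilde\Psi_u-\tilde\Psi_v|^p\le C(p,T)\int_0^{2T}\!\!\int_0^{2T}\frac{|\tilde\Psi_u-\tilde\Psi_v|^p}{|u-v|^{\gamma p+2}}\rd u\rd v .
\end{align*}
Taking expectations and inserting the increment bound, the integrand is at most $K|u-v|^{p-\gamma p-2}$. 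This is integrable on $[0,2T]^2$ because $p-\gamma p-2>-1$ exactly when $\gamma<1-1/p$. Therefore
\begin{align*}
\hE\sup_{u,v}|\tilde\Psi_u-\tilde\Psi_v|^p\le C(p,T)\,C_F^p(r_2-r_1)^{p/2}.
\end{align*}
Finally, $\sup_u|\tilde\Psi_u|\le|\tilde\Psi_{2T}|+\sup_{u,v}|\tilde\Psi_u-\tilde\Psi_v|$, and $\tilde\Psi_{2T}=0$ almost surely by the first step. This yields the claimed bound.

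\textbf{Main obstacle.} The delicate point is that the increment exponent is exactly $p$: it comes only from the Lipschitz-in-$u$ assumption, and we need the sup bound with the sharp factor $(r_2-r_1)^{p/2}$ and no loss. GRR handles this as long as $p>1$, and $p\ge 2$ gives room to choose $\gamma$. One must also check that the constant depends only on $(p,T)$, which holds because the parameter domain is the fixed interval $[0,2T]$. Measurability of $(u,s)\mapsto F$ and joint measurability of $\Psi$ are implicitly assumed so that Fubini can be applied inside the GRR integral.
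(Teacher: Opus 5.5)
Your proposal is correct and follows essentially the same route as the paper: the BDG increment bound $\hE|\Psi_{u_1}-\Psi_{u_2}|^p\le C C_F^p|u_1-u_2|^p(r_2-r_1)^{p/2}$, a quantitative Kolmogorov-type estimate on a compact parameter interval, and anchoring the supremum at a point where $\Psi$ vanishes. The differences are minor: you derive the quantitative estimate yourself via Garsia--Rodemich--Rumsey instead of citing the Kolmogorov theorem of Bras--Fukasawa, and you anchor at $2T$ rather than $3T$. At $2T$, note that $\Psi_{2T}=0$ a.s.\ is not immediate from $F(u,\cdot)=0$ for $u>2T$, since that inequality is strict. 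It does follow in one line from your increment bound with $u_1=2T$, $u_2=2T+\varepsilon$ and $\varepsilon\downarrow 0$; alternatively, run GRR on $[0,2T+1]$ as the paper does on $[0,3T]$.
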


\begin{proof}
Applying the Burkholder--Davis--Gundy inequality shows that for $u_1, u_2 \ge 0$, 
\begin{align*}
\| \Psi_{u_1} - \Psi_{u_2} \|_{L^p(\Omega; \mathbb{R}^{d})}^p
&= \left\| \int_{r_1}^{r_2} F(u_1, s) - F(u_2, s) \rd W_s \right\|_{L^p(\Omega; \mathbb{R}^{d})}^p \\
&\leq C \left( \int_{r_1}^{r_2} \left\| F(u_1, s) - F(u_2, s) \right\|_{L^p(\Omega; \mathbb{R}^{d \times m})}^2 \rd s \right)^\frac{p}{2} \\
&\leq C C_F^p |u_1 - u_2|^p (r_2-r_1)^\frac{p}{2}.
\end{align*}
Then using the Kolmogorov continuity theorem (cf.\ \cite[Theorem A.1]{BrasFukasawa2025}) indicates that there exists a modification $\tilde{\Psi}$ of $\Psi$ which is $\alpha$-H\"older continuous for every $\alpha \in (0, (p-1)/p)$ and 
\begin{align*}
\hE \left[ \left( \sup_{\substack{u_1 \neq u_2\\ u_1,u_2\in[0,3T]}} \frac{|\tilde{\Psi}_{u_1} -\tilde{\Psi}_{u_2}|}{|u_1-u_2|^\alpha} \right)^p \right] 
\leq C_T C_F^p (r_2-r_1)^\frac{p}{2}.
\end{align*}
Finally, taking $u_1=3T$ and $u_2=u$ with $\Psi_{3T}=0$ yields
\begin{align*}
\hE \big[ \| \tilde{\Psi} \|_{L^ \infty}^p \big]
= \hE \big[ \sup\nolimits_{u \geq 0} | \tilde{\Psi}_u |^p \big] 
= \hE \big[ \sup\nolimits_{u \in [0,2T]} | \tilde{\Psi}_u |^p \big]
\leq C_TC_F^p (r_2-r_1)^{p/2}, 
\end{align*}
which completes the proof. 
\end{proof}

\begin{lemma} 
\label{Appendix lem2}
Let $p \geq 2$ and $Z \in \cC([0,T]; L^p(\Omega; \hR^d))$ be an $\hF$-adapted process. If the linear growth condition
\begin{align} \label{appen:linearGrowth}
|\mu (t, s, x)| + |\sigma (t, s, x)| 
\leq C (1+|x|), \qquad \forall\, t,s \in [0,T],~~x \in \mathbb{R^d} 
\end{align}
and \eqref{eq:asp2} hold, then for any $k \in \{0,1,\cdots,N-1\}$, 
\begin{align}
&\hE \left[ \left\| \int_{t_k}^{t_{k+1}} \mu( t_{k+1}+\cdot, s, Z_s) \rd s \right\|_{L^ \infty}^p \right]
\leq C \left( 1 + \sup_{t \in [0,T]} \|Z_t\|_{L^p(\Omega; \hR^d)}^p \right) h^p, 
\label{eq.Appendix lem2_1} \\
&\hE \left[ \left\| \int_{t_k}^{t_{k+1}} \sigma( t_{k+1}+\cdot, s, Z_s) \rd W_s \right\|_{L^ \infty}^p \right] 
\leq C \left( 1 + \sup_{t \in [0,T]} \|Z_t\|_{L^p(\Omega; \hR^d)}^p \right) h^\frac{p}{2}, 
\label{eq.Appendix lem2_2} \\
&\hE \left[ \left\| \int_{t_k}^{t_{k+1}} \sigma( t_{k+1}+\cdot, s, Z_s) \rd W_s^N \right\|_{L^ \infty}^p \right] 
\leq C \left( 1 + \sup_{t \in [0,T]} \|Z_t\|_{L^{2p}(\Omega; \hR^d)}^p \right) h^\frac{p}{2}. \label{eq.Appendix lem2_3} 
\end{align}
\end{lemma}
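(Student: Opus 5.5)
We prove the three estimates separately. For \eqref{eq.Appendix lem2_1} and \eqref{eq.Appendix lem2_3} no martingale argument is needed: the integrals are pathwise Lebesgue(--Stieltjes) integrals. For \eqref{eq.Appendix lem2_2} we reduce to Lemma \ref{lem.boundedness of moments}.

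\emph{Estimate \eqref{eq.Appendix lem2_1}.} For every $u\ge0$, the linear growth condition \eqref{appen:linearGrowth} (with $\mu\equiv0$ outside $[0,2T]^2$) gives
\begin{align*}
\Big|\int_{t_k}^{t_{k+1}} \mu(t_{k+1}+u,s,Z_s)\,\rd s\Big| \le C\int_{t_k}^{t_{k+1}}(1+|Z_s|)\,\rd s .
\end{align*}
The right-hand side does not depend on $u$, so the same bound holds for the supremum norm. Applying H\"older's inequality in $s$ then yields $h^{p-1}\int_{t_k}^{t_{k+1}}\hE(1+|Z_s|)^p\,\rd s\le C(1+\sup_t\|Z_t\|^p_{L^p})h^p$.

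\emph{Estimate \eqref{eq.Appendix lem2_3}.} By \eqref{eq.dWN}, $\rd W^N_s=h^{-1}\Delta W(t_k)\,\rd s$ on $[t_k,t_{k+1}]$. Hence, uniformly in $u$,
\begin{align*}
\sup_{u\ge0}\Big|\int_{t_k}^{t_{k+1}}\sigma(t_{k+1}+u,s,Z_s)\,\rd W_s^N\Big|\le C\,\frac{|\Delta W(t_k)|}{h}\int_{t_k}^{t_{k+1}}(1+|Z_s|)\,\rd s .
\end{align*}
Raise this to the power $p$ and apply H\"older in $s$. Then use Cauchy--Schwarz in $\omega$ to separate $\hE|\Delta W(t_k)|^{2p}\le Ch^p$ from $\hE(1+|Z_s|)^{2p}$. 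This gives $C h^{-p}h^{p/2}h^{p}(1+\sup_t\|Z_t\|^p_{L^{2p}})=C(1+\sup_t\|Z_t\|^{p}_{L^{2p}})h^{p/2}$. The appearance of $Z$ in $L^{2p}$ here is exactly what the statement asks for.

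\emph{Estimate \eqref{eq.Appendix lem2_2}.} Here the supremum over $u$ sits outside a stochastic integral, so a pathwise bound is not available. We instead apply Lemma \ref{lem.boundedness of moments} with $F(u,s):=\sigma(t_{k+1}+u,s,Z_s)$, $r_1=t_k$ and $r_2=t_{k+1}$. This $F$ is adapted in $s$ because $Z$ is, and it vanishes when $t_{k+1}+u>2T$ by the extension convention, so in particular for $u>2T$. Condition \eqref{eq:asp2} gives
\begin{align*}
\|F(u_1,s)-F(u_2,s)\|_{L^p}\le C|u_1-u_2|\,(1+\|Z_s\|_{L^p}),
\end{align*}
so we may take $C_F=C(1+\sup_t\|Z_t\|_{L^p})$. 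Lemma \ref{lem.boundedness of moments} then yields the bound $C\,C_F^p h^{p/2}$, which is \eqref{eq.Appendix lem2_2} for the continuous modification. The supremum in $L^\infty$ is interpreted for this modification, as elsewhere in the paper.

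\emph{Main obstacle.} The delicate point is the Lipschitz hypothesis in Lemma \ref{lem.boundedness of moments} across the cutoff at $t_{k+1}+u=2T$. Condition \eqref{eq:asp2} is stated only on $[0,T]$, whereas the zero extension could introduce a jump. We handle this by noting that the extension is assumed (as stated after Assumption \ref{assumption}) to retain the same regularity. If necessary, one could instead split $u$ into the region where $t_{k+1}+u\le 2T$ and a trivial remainder where $F\equiv0$.
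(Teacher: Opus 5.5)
Your proposal is correct and follows the paper's proof essentially step by step. For \eqref{eq.Appendix lem2_1} you use a $u$-uniform linear-growth bound plus H\"older. For \eqref{eq.Appendix lem2_2} you apply Lemma~\ref{lem.boundedness of moments} with $F(u,s)=\sigma(t_{k+1}+u,s,Z_s)$ and $C_F=C(1+\sup_t\|Z_t\|_{L^p})$ obtained from \eqref{eq:asp2}. For \eqref{eq.Appendix lem2_3} you use H\"older in $s$ plus Cauchy--Schwarz in $\omega$ to separate $\Delta W(t_k)/h$ from $Z$; applying Cauchy--Schwarz at fixed $s$ is slightly tidier than the paper, which puts $\sup_t|Z_t|^{2p}$ inside the expectation. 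The cutoff at $t_{k+1}+u=2T$ that you flag is not addressed in the paper either: it simply relies on the stated convention that the zero extension retains the regularity of Assumption~\ref{assumption}, exactly as you do.
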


\begin{proof}
Applying H\"older's inequality together with \eqref{appen:linearGrowth} yields 
\begin{align*}
\hE \left[ \left\| \int_{t_k}^{t_{k+1}} \mu( t_{k+1}+\cdot, s, Z_s) \rd s \right\|_{L^ \infty}^p \right]
&\leq C h^{p-1} \int_{t_k}^{t_{k+1}} \hE \left[ \left\| \mu( t_{k+1}+\cdot, s, Z_s) \right\|_{L^ \infty}^p \right] \rd s \\
&= Ch^{p-1} \int_{t_k}^{t_{k+1}} \hE \left[ \left(\sup_{u \geq 0} \left| \mu( t_{k+1}+u, s, Z_s) \right| \right)^p \right] \rd s\\
&\leq C \left( 1 + \sup_{t \in [0,T]} \|Z_t\|_{L^p(\Omega; \hR^d)}^p \right) h^p, 
\end{align*}
which shows that \eqref{eq.Appendix lem2_1} holds.

We define $F(u,s) = \sigma(t_{k+1}+u, s, Z_s)$ for $u \geq 0$ and $s \in [t_k, t_{k+1}]$. Using \eqref{eq:asp2} implies that 
\begin{align*}
\left\| F(u_1, s) - F(u_2, s) \right\|_{L^p(\Omega; \hR^{d \times m})} \leq C \left( 1 + \sup_{t \in [0,T]} \|Z_t\|_{L^p(\Omega; \hR^d)} \right) |u_1 - u_2|, 
\end{align*}
which together with Lemma \ref{lem.boundedness of moments} proves \eqref{eq.Appendix lem2_2}.

By H\"older's inequality and \eqref{appen:linearGrowth}, one can arrive at 
\begin{align*}
&\quad\ \hE \left[ \left\| \int_{t_k}^{t_{k+1}} \sigma( t_{k+1}+\cdot, s, Z_s) \rd W_s^N \right\|_{L^ \infty}^p \right] \\
&= \hE \left[ \left\| \int_{t_k}^{t_{k+1}} \sigma( t_{k+1}+\cdot, s, Z_s) \frac{\rd W_s^N}{\rd s} \rd s \right\|_{L^ \infty}^p \right] \\
&\leq Ch^{p-1} \int_{t_k}^{t_{k+1}} \hE \left[ \left\| \sigma( t_{k+1}+\cdot, s, Z_s) \frac{\rd W_s^N}{\rd s} \right\|_{L^ \infty}^p \right] \rd s \\
&\leq C h^{p-1} \int_{t_k}^{t_{k+1}} \left( \hE \left[ 1 + \sup_{t \in [0,T]}|Z_t|^{2p} \right] \right)^\frac{1}{2} \left( \hE \left| \frac{\Delta W(t_k)}{h} \right|^{2p} \right)^\frac{1}{2} \rd s \\
&\leq C \left( 1 + \sup_{t \in [0,T]} \|Z_t\|_{L^{2p}(\Omega; \hR^d)}^p \right) h^\frac{p}{2},
\end{align*}
which shows that \eqref{eq.Appendix lem2_3} holds. The proof is completed. 
\end{proof}

\begin{lemma}
\label{Appendix lem3}
Let $p \geq 2$, $\sigma \in \cC^{0,1,1}([0, T] \times [0, T] \times \mathbb{R}^d; \hR^{d \times m})$ and $Z, \bar{Z} \in \cC([0,T]; L^p(\Omega; \hR^d))$ be two $\hF$-adapted processes. Under condition \eqref{eq:asp3}, it holds that for any $k \in \{0,1,\cdots,N-1\}$, 
\begin{align}
&\hE \left[ \left\| \int_{t_k}^{t_{k+1}} \sigma( t_{k+1}+\cdot, s, Z_s) - \sigma( t_{k+1}+\cdot, t_k, Z_s) \rd W_s \right\|_{L^ \infty}^p \right] \label{eq.Appendix lem3_1} \\
&\qquad \leq C \left( 1 + \sup_{t \in [0,T]} \|Z_t\|_{L^p(\Omega; \hR^d)}^p \right) h^{3p/2}, \notag \\
&\hE \left[ \left\| \int_{t_k}^{t_{k+1}} \sigma( t_{k+1}+\cdot, t_k, Z_s) - \sigma( t_{k+1}+\cdot, t_k, \bar{Z}_{t_k}) \rd W_s \right\|_{L^ \infty}^p \right] \label{eq.Appendix lem3_2} \\
&\qquad \leq C \sup_{s \in [t_k,t_{k+1}]} \|Z_s-\bar{Z}_{t_k}\|_{L^{2p}(\Omega; \hR^d)}^p \left( 1+ \sup_{t \in [0,T]} \|Z_t\|_{L^{2p}(\Omega; \hR^d)}^p + \sup_{t \in [0,T]} \|\bar{Z}_t\|_{L^{2p}(\Omega; \hR^d)}^p \right) h^{p/2}. \notag
\end{align}
\end{lemma}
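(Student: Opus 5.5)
Both estimates will be obtained by the same route as Lemma \ref{lem.boundedness of moments}: we bound the $L^p(\Omega)$ modulus of the integrand in the parameter $u$, apply the Burkholder--Davis--Gundy inequality to increments in $u$, and pass to the supremum over $u\ge0$ via the Kolmogorov continuity theorem. The whole task therefore reduces to checking the hypothesis of Lemma \ref{lem.boundedness of moments} with an explicit constant $C_F$ for each integrand, with $r_1=t_k$ and $r_2=t_{k+1}$.

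For \eqref{eq.Appendix lem3_1}, set $F(u,s)=\sigma(t_{k+1}+u,s,Z_s)-\sigma(t_{k+1}+u,t_k,Z_s)$. We write it as $\int_{t_k}^{s}\partial_2\sigma(t_{k+1}+u,r,Z_s)\,\rd r$, using $\sigma\in\cC^{0,1,1}$. Then
\[
F(u_1,s)-F(u_2,s)=\int_{t_k}^{s}\big(\partial_2\sigma(t_{k+1}+u_1,r,Z_s)-\partial_2\sigma(t_{k+1}+u_2,r,Z_s)\big)\,\rd r ,
\]
and \eqref{eq:asp3} bounds its modulus by $C|u_1-u_2|(s-t_k)(1+|Z_s|)$. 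Taking $L^p(\Omega)$ norms gives $C_F=C h\,(1+\sup_t\|Z_t\|_{L^p})$. Lemma \ref{lem.boundedness of moments} then yields $C_F^p h^{p/2}$, i.e.\ the claimed $h^{3p/2}$ rate. The vanishing of $F$ for $u>2T$ is automatic from the zero extension of $\sigma$, but the extension may break the Lipschitz bound at $t=2T$. The cleanest fix is to apply the Kolmogorov argument on $u\in[0,2T-t_{k+1}]$, where \eqref{eq:asp3} holds (extending it to $[0,2T]$ as the paper implicitly does), and to note $F\equiv0$ beyond that range.

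For \eqref{eq.Appendix lem3_2}, set $F(u,s)=\sigma(t_{k+1}+u,t_k,Z_s)-\sigma(t_{k+1}+u,t_k,\bar Z_{t_k})$. By the mean value theorem, $F(u,s)=\int_0^1\nabla_x\sigma(t_{k+1}+u,t_k,\bar Z_{t_k}+\lambda(Z_s-\bar Z_{t_k}))\,\rd\lambda\,(Z_s-\bar Z_{t_k})$. The increment in $u$ is then controlled by \eqref{eq:asp3} for $\nabla_x\sigma$:
\[
|F(u_1,s)-F(u_2,s)|\le C|u_1-u_2|\big(1+|Z_s|+|\bar Z_{t_k}|\big)\,|Z_s-\bar Z_{t_k}| .
\]
Applying Cauchy--Schwarz in $L^p(\Omega)$ splits this into the $L^{2p}$ norms of the two factors. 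This gives $C_F=C\sup_s\|Z_s-\bar Z_{t_k}\|_{L^{2p}}\big(1+\sup\|Z\|_{L^{2p}}+\sup\|\bar Z\|_{L^{2p}}\big)$, and Lemma \ref{lem.boundedness of moments} delivers the stated bound with $h^{p/2}$ after raising to the $p$-th power; the cross factor is absorbed by $(a+b)^p\le C(a^p+b^p)$.

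The main obstacle is not analytic but the bookkeeping at the boundary of the extended domain, i.e.\ making sure the Lipschitz-in-$u$ hypothesis of Lemma \ref{lem.boundedness of moments} is legitimately satisfied on all of $u\ge0$ given the zero extension. I would handle this by restricting the Kolmogorov step to the compact interval where the coefficients are genuinely defined, as in the proof of Lemma \ref{lem.boundedness of moments}, together with the implicit assumption that $\sigma$ vanishes continuously at the cutoff.
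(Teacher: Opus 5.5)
Your proposal is correct and follows the paper's proof essentially step by step. You use the same two integrands $F$, the same first-order expansion in the second variable (your $\int_{t_k}^{s}\partial_2\sigma\,\rd r$ is the paper's $(s-t_k)\int_0^1\partial_2\sigma(\cdot,t_k+\xi(s-t_k),\cdot)\,\rd\xi$) or in the state variable, the same use of \eqref{eq:asp3} (plus Cauchy--Schwarz for the second bound) to get $C_F$, and then Lemma \ref{lem.boundedness of moments}.

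Your remark about the zero extension at $t=2T$ is a fair observation about the paper's setup rather than about this lemma. Lemma \ref{lem.boundedness of moments} tacitly relies on exactly what you describe: $\sigma$ vanishes Lipschitz-continuously at the cutoff, which is what makes the anchor $\Psi_{3T}=0$ legitimate.
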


\begin{proof}
We denote $F(u,s) = \sigma( t_{k+1}+u, s, Z_s) - \sigma( t_{k+1}+u, t_k, Z_s)$ for $u \geq 0$ and $s \in [t_k, t_{k+1}]$. By the mean value theorem and \eqref{eq:asp3}, one obtains
\begin{small}
\begin{align*}
&\quad\ \| F(u_1,s) - F(u_2,s) \|_{L^p(\Omega; \hR^{d \times m})} \\
&= \big\| \sigma( t_{k+1}+u_1, s, Z_s) - \sigma( t_{k+1}+u_1, t_k, Z_s) - \big( \sigma( t_{k+1}+u_2, s, Z_s) - \sigma( t_{k+1}+u_2, t_k, Z_s) \big) \big\|_{L^p(\Omega; \hR^{d \times m})} \\
&= \left\| (s-t_k) \int_0^1 \partial_2 \sigma(t_{k+1}+u_1, t_k+\xi(s-t_k), Z_s) - \partial_2 \sigma(t_{k+1}+u_2, t_k+\xi(s-t_k), Z_s) \rd \xi \right\|_{L^p(\Omega; \hR^{d \times m})} \\
&\leq Ch \left(1+ \sup_{t \in [0,T]} \|Z_t\|_{L^p(\Omega; \hR^d)} \right) |u_1-u_2|.
\end{align*}
\end{small}
Hence, \eqref{eq.Appendix lem3_1} follows from Lemma \ref{lem.boundedness of moments}.

Similarly, letting $F(u,s)=\sigma( t_{k+1}+u, t_k, Z_s) - \sigma( t_{k+1}+u, t_k, \bar{Z}_{t_k})$ with $u \geq 0$ and $s \in [t_k, t_{k+1}]$ yields 
\begin{align*}
&\quad\ \| F(u_1,s) - F(u_2,s) \|_{L^p(\Omega; \hR^{d \times m})} \\
&= \big\| \big( \sigma( t_{k+1}+u_1, t_k, Z_s) - \sigma( t_{k+1}+u_1, t_k, \bar{Z}_{t_k}) \big ) \\
&\qquad\quad - \big( \sigma( t_{k+1}+u_2, t_k, Z_s) - \sigma( t_{k+1}+u_2, t_k, \bar{Z}_{t_k}) \big) \big\|_{L^p(\Omega; \hR^{d \times m})} \\
&= \bigg \| \int_0^1 \< \nabla_x \sigma(t_{k+1}+u_1, t_k, \bar{Z}_{t_k} + \xi(Z_s-\bar{Z}_{t_k})) - \nabla_x \sigma(t_{k+1}+u_2, t_k, \bar{Z}_{t_k} + \xi(Z_s-\bar{Z}_{t_k})), \\
&\qquad\quad\ Z_s-\bar{Z}_{t_k} \> \rd \xi \bigg\|_{L^p(\Omega; \hR^{d \times m})} \\
&\leq C \sup_{s \in [t_k,t_{k+1}]} \|Z_s-\bar{Z}_{t_k}\|_{L^{2p}(\Omega; \hR^d)} \left(1+ \sup_{t \in [0,T]} \|Z_t\|_{L^{2p}(\Omega; \hR^d)} + \sup_{t \in [0,T]} \|\bar{Z}_t\|_{L^{2p}(\Omega; \hR^d)} \right) |u_1-u_2|.
\end{align*} 
Thus, \eqref{eq.Appendix lem3_2} follows from Lemma \ref{lem.boundedness of moments}. The proof is completed. 
\end{proof}



\end{document}